\newcommand{\Rmnum}[1]{\expandafter\@slowromancap\romannumeral #1@}
\newcommand{\bu}{u}
\def\p{p}
\def\R{\mathbb{R}}
\newtheorem{theorem}{Theorem}[section]
\newtheorem{lemma}{Lemma}[section]
\newtheorem{proposition}{Proposition}[section]
\newtheorem{remark}{Remark}[section]
\theoremstyle{definition}
\numberwithin{equation}{section}
\begin{document}
\author[G. Hong]{Guangyi Hong}\address{Guangyi Hong\newline\indent Department of Applied Mathematics\newline\indent Hong Kong Polytechnic University\newline\indent Hung Hom,
Kowloon, Hong Kong, P. R. China}
\email{gyhmath05@outlook.com}
\author[H. Peng]{Hongyun Peng}\address{Hongyun Peng\newline\indent School of Applied Mathematics\newline\indent Guangdong University of Technology\newline\indent Guangzhou, 510006, P. R. China}
\email{penghy010@163.com}

\author[Z.A. Wang]{Zhi-An Wang\textsuperscript{$ \ast $}}\address{Zhi-an Wang\newline\indent Department of Applied Mathematics\newline\indent Hong Kong Polytechnic University\newline\indent Hung Hom,
Kowloon, Hong Kong,  P. R. China}\email{mawza@polyu.edu.hk}

\author[C. Zhu]{Changjiang Zhu}\address{Changjiang Zhu\newline\indent School of Mathematics\newline\indent South China University of
Technology\newline\indent Guangzhou, 510641, P. R. China}
\email{machjzhu@scut.edu.cn}

\title[Phase transition steady state  to a hyperbolic-parabolic system]{\bf Nonlinear stability of phase transition steady states to a hyperbolic-parabolic system modelling vascular networks}
\begin{abstract}
This paper is concerned with the existence and stability of phase transition steady states to a quasi-linear hyperbolic-parabolic system of chemotactic aggregation, which was proposed in \cite{ambrosi2005review, gamba2003percolation} to describe the coherent vascular network formation observed {\it in vitro} experiment. Considering the system in the half line $ \mathbb{R}_{+}=(0,\infty)$ with Dirichlet boundary conditions, we first prove the existence \textcolor{black}{and uniqueness of non-constant phase transition steady states} under some structure conditions on the pressure function.  Then we prove that this unique phase transition steady state is nonlinearly asymptotically stable against a small perturbation. We prove our results by the method of energy estimates, the technique of {\it a priori} assumption and a weighted Hardy-type inequality.

\vspace*{4mm}
\noindent{\sc 2020 Mathematics Subject Classification.} 35L60, 35L04, 35B40, 35Q92

\vspace*{1mm}
 \noindent{\sc Keywords. }Hyperbolic-parabolic system; vascular network; phase transitions; nonlinear stability; energy estimates

\end{abstract}
\maketitle

 \section{Introduction} \label{sec:introduction}
 Experiments of {\it in vitro} blood vessel formation demonstrate that endothelial cells randomly dispersing on a gel substrate (matrix) can spontaneously organize into a coherent vascular network (see \cite{ambrosi2005review, gamba2003percolation} and figures therein), which is called angiogenesis - a major factor driving the tumor growth.  How endothelial cells self-organize geometrically into capillary networks and how separate individual cells cooperate in the formation of coherent patterns remain poorly understood biologically up to date. These networking patterns can not be explained by the macroscopic aggregation models such as Keller-Segel type chemotaxis models that lead to point-wise blowup or rounded aggregates, nor by the microscopic kinetic models that describe individual cell behaviors, as commented in \cite{chavanis2006jeans}. Strikingly they can be numerically reproduced by a hydrodynamic (hyperbolic-parabolic) models of chemotaxis proposed in \cite{ambrosi2005review, gamba2003percolation} as follows
\begin{eqnarray}\label{hyper}
\left\{\begin{array}{l}
\partial_t \rho+\nabla \cdot (\rho \bu)=0, \\[1mm]
\partial_t (\rho \bu)+\nabla \cdot (\rho \bu \otimes \bu)+\nabla \p(\rho)=\mu \rho \nabla \phi -\alpha \rho \bu,\\[1mm]
\partial_t \phi-\Delta \phi =a\rho-b\phi
 \end{array} \right.
\end{eqnarray}
where $\rho$ denotes the endothelial cell density, $\bu$ the cell velocity and $\phi$ the concentration of chemoattractant secreted by cells; $\p$ is a pressure function accounting for the fact that closely packed cells resist to compression due to the impenetrability of cellular matter, the parameter $\mu>0$ measures the intensity of cell response to the chemoattractant and $\alpha\rho \bu$ corresponds to a damping (friction) force due to the interaction between cells and underlying substratum with a drag coefficient $\alpha>0$; $a$ and $b$ are positive constants accounting for the growth and death rates of the chemoattractant. The convective term $\nabla \cdot (\rho \bu \otimes \bu)$ models the cell persistence (inertia effect). We refer more detailed biological interpretations on the model \eqref{hyper} to \cite{ambrosi2005review, gamba2003percolation}. The hydrodynamic system \eqref{hyper} has been (formally) derived from the following kinetic equation in \cite{chavanis2007kinetic} via the mean-field approximation
 \begin{eqnarray*}
\frac{\partial f}{\partial t} +v\cdot \nabla_x f= \int_{V} (T[\phi](v', v)f(t,x,v')-T[\phi](v,v')f(t,x,v))\mathrm{d} v',
\end{eqnarray*}
where $f(t,x,v)$ denotes cell density distribution at time $t$, position $x\in {\mathbb R}^d$ moving with a velocity $v$ from a compact set $V$ in $\mathbb{R}^d$, and tumbling kernel $T[\phi](v',v)$ describes the frequency of changing trajectories from velocity $v'$ (anterior) to $v$ (posterior) in response to a chemical concentration $\phi$.

At first glance from mathematical point of view, the above hydrodynamic system \eqref{hyper} is analogous to the well-known damped Euler-Poisson system where the $\phi$-equation is elliptic (i.e. $-\Delta \phi=\alpha \rho$) which appears in various important applications depending on the sign of $\mu$, such as the propagation of electrons in semiconductor devices (cf. \cite{markowich2012semiconductor}) and the transport of ions in plasma physics (cf. \cite{choudhuri1998physics}) and the collapse of gaseous stars due to self-gravitation \cite{chandrasekhar1957introduction}. However the parabolic $\phi$-equation in \eqref{hyper} will bring substantial differences in mathematical analysis and many existing mathematical frameworks developed for the Euler-Poisson system are inapplicable directly to \eqref{hyper}. Due to the competing interactions between parabolic dissipation and hyperbolic anti-dissipation effect plus nonlinear convection, the global well-posedness and regularity of solutions to \eqref{hyper} is very complicated as can be glimpsed from the Euler-Poisson equations for which the understanding of solution behaviors is rather incomplete despite of numerous studies attempted. Up to date, there are only few results obtained for \eqref{hyper} in the literature. First when the initial value $(\rho_0, u_0, \phi_0)$ is a small perturbation of a constant ground state $(\bar{\rho}, 0, \bar{\phi})$ in $H^s(\R^d) (s>d/2+1)$ with $\bar{\rho}>0$ sufficiently small, the global existence and stability of solutions with non-vacuum (i.e. $\inf\limits_{x\in \R^d} \rho>0$) to \eqref{hyper} was established in \cite{russo2013existence,russo-2012}. The linear stability of constant ground state $(\bar{\rho}, 0, \bar{\phi})$ was obtained under the condition $p'(\bar{\rho})>\frac{a \mu}{b} \bar{\rho}$ in \cite{kowalczyk2004stability} where an additional viscous term $\nabla ^{2}u $ is supplied to the second equation of \eqref{hyper}. The stationary solutions of \eqref{hyper} with vacuum (bump solutions) in a bounded interval with zero-flux boundary condition and in $\R$ were constructed in \cite{berthelin2016stationary} and further elaborated in \cite{carrillo2019phase}. The model \eqref{hyper} with $p(\rho)=\rho$ and periodic boundary conditions in one dimension was numerically explored in \cite{filbet2005approximation}. These appear to be the only results available to the system \eqref{hyper} in the literature and further studies are in demand. For results on some other types of \textcolor{black}{hyperbolic-parabolic} chemotaxis models, we refer to \cite{DiRusso,li2011hyperbolic,li2010nonlinear, mei2015asymptotic, zhang2007global} and references therein.

Note that the above-mentioned mathematical works on \eqref{hyper} prescribe initial data as a small perturbation of constant equilibria and the large-time profile of solutions is also constant, which can not explain the experimental observations of \cite{ambrosi2005review, gamba2003percolation} showing prominent phase transition patterns connecting regions clear (or low density) of cells. This motivates us to explore the possible non-constant phase transition profiles of solutions. The aim of this paper is to study the existence and stability of phase transition steady states without vacuum to the system \eqref{hyper} in one dimensional half space $\R_+=(0,\infty)$. For the convenience of presentation in the sequel, we set $m=\rho u$, namely $m$ denotes the momentum of cells, and recast the one-dimensional system \eqref{hyper} in $\R_+$ as
\begin{subnumcases}
\displaystyle \rho_{t}+m_{x}=0,  \ \ \ \ x \in \mathbb{R}_{+},\ \ t>0, \label{ori-eq-a}\\[1mm]
 m_{t}+\left(\frac{m^{2}}{\rho}+p(\rho)\right)_{x}=\mu \rho \phi_{x}-\alpha m, \label{ori-eq-b}\\[1mm]
\phi_{t}=\phi_{x x}+a \rho-b \phi.\label{ori-eq-c}
\end{subnumcases}
As in \cite{kowalczyk2004stability}, we assume the pressure function $p$ depends only on the density and satisfies
\begin{gather}\label{p-condition}
\displaystyle p(\rho) \in C ^{3}(0,\infty),\ \  p'(\rho)- \frac{a \mu}{b}\rho>0\ \ \mbox{for any }\rho>0.
\end{gather}
A typical form of $p$ is $ p(\rho) = \frac{K}{2}\rho ^{2}$ with $ K > \frac{a \mu}{b}$. We supplement the system \eqref{ori-eq-a}--\eqref{ori-eq-c} with the following boundary conditions
\begin{gather}\label{initial-ori}
\displaystyle m(0,t)=0,\ \ \
  \phi(0,t)=\phi _{-},
\end{gather}
and the initial value
\begin{gather}\label{far-field}
\displaystyle (\rho,m, \phi)(x,0)=(\rho _{0},m_{0},\phi _{0})(x) \rightarrow (\rho _{+},0,\phi _{+})\ \ \mbox{as}\ \  x \rightarrow +\infty
\end{gather}
\textcolor{black}{where $ \phi _{-} $, $ \rho _{+}>0 $, $ \phi _{+}>0 $ are constants and $\phi _{-}\ne \phi _{+}$.}

In this paper, we shall first use delicate analysis to show that the system \eqref{ori-eq-a}--\eqref{ori-eq-c} has a unique non-constant stationary solution $(\bar{\rho},0,\bar{\phi})$ without vacuum satisfying
\begin{gather*}
\displaystyle \bar{\phi}(0)=\phi _{-} \ \ \mbox{and}\ \ \lim _{x \rightarrow +\infty}\left( \bar{\rho}, \bar{\phi} \right)(x)=(\rho _{+},\phi _{+})
\end{gather*}
where $\bar{\rho}$ and $\bar{\phi}$ are monotone.
Then we show that the stationary solution $(\bar{\rho},0,\bar{\phi})$ is asymptotically stable, namely the solution of \eqref{ori-eq-a}--\eqref{ori-eq-c} converges to $(\bar{\rho},0,\bar{\phi})$ point-wisely as $t \to \infty$ if the initial value  $(\rho _{0},m_{0},\phi _{0})$ is an appropriate small perturbation of $(\bar{\rho},0,\bar{\phi})$. The monotonicity of $\bar{\rho}$ and $\bar{\phi}$ indicates that the steady states have phase transition profiles. To the best of our knowledge, there are not such results available in the literature \textcolor{black}{for \eqref{ori-eq-a}--\eqref{ori-eq-c}} and even for the Euler-Poisson equations (cf. \cite{yang-nishihara-1999-JDE, meiming-2005-JMFM,meiming-2010-JDE,meiming-2010-IJAM,hsiao-liutaiping}. To prove our results, we fully capture the structure of \eqref{ori-eq-a}--\eqref{ori-eq-c} where the stationary solution has exponential decay at far field, which is not enjoyed by the Euler-Poisson (or Euler) equations. Though part of the proof \textcolor{black}{of} our results is inspired by some ideas of \cite{hsiao-liutaiping,nishihara-1996,yang-nishihara-1999-JDE,huang-mei-wang-yu} on Euler-Poisson or Euler equations, we have added lof of extra efforts to deal with complicated couplings and boundary effects. The coupling term $\rho \phi _{x} $ leads to two linear terms in the linearized system around stationary solutions and how to make these linear terms under control is key to the asymptotic stability against small perturbations. We resolve this issue by the structural assumption \eqref{p-condition} to take up the dissipation and use the exponentially weighted Hardy inequality in half space to compensate for the lack of dissipation in the hyperbolic equations. Due to the couplings and boundary effects, the energy estimates are very sophisticated, where the lower-order estimates involve higher-order estimates and vice versa. We use the delicate energy estimates along with the technique of {\it a priori} assumptions to unravel these tangles and gradually achieve our results.

The rest of this paper is organized as follows. In section \ref{sec-mainresult}, we state our main results on the existence of non-constant stationary solutions (Theorem \ref{thm-sta-problem}) and stability of stationary solutions (Theorem \ref{thm-stability}). In section \ref{sec-stationary-solution}, we study the stationary problem and prove Theorem \ref{thm-sta-problem}. The proof of Theorem \ref{thm-stability} is given in Section \ref{sec:stability}.

\section{Statement of main results}\label{sec-mainresult}
\label{sec:section_name}
In this section, we shall state the main results of this paper. To be precise, we first introduce some notations used. Throughout the paper, we use $ \|\cdot\|_{L ^{\infty}} $, $ \|\cdot\| $ and $ \|\cdot\|_{k} $ to denote the norms of usual $ L ^{\infty}(\mathbb{R}_{+}) $, $ L ^{2}(\mathbb{R}_{+}) $ and the standard Sobolev space $ H ^{k}(\mathbb{R}_{+}) $, respectively. We also use $ \|(f _{1},\cdots,f _{n})\|~(\mathrm{resp}.~\|(f _{1},\cdots,f _{n})\|_{k}) $ to denote $ \|f _{1}\|+\cdots+\|f _{n}\|~(\mathrm{resp}.~\|f _{1}\|_{k}+\cdots+\|f _{n}\|_{k}) $ for some $ n \in \mathbb{Z}_{+}$. We denote by $ C $ a generic constant that may vary in the context, and by $ C _{\eta} $ a constant  depending on $ \eta $. Occasionally, we simply write $ f \sim g $ if $ C ^{-1}\leq f \leq Cg $ for some constant $ C>0 $.

\textcolor{black}{
It can be verified that the system \eqref{ori-eq-a}--\eqref{ori-eq-c} possesses the following energy functional (cf. \cite{berthelin2016stationary, chavanis2007kinetic})
$$F[\rho, u,\phi]=\frac{1}{2\mu }\int_{\mathbb{R}_{+}}\rho u^2 \mathrm{d} x+\frac{1}{\mu }\int_{\mathbb{R}_{+}}G(\rho) \mathrm{d} x+\frac{1}{2a}\int_{\mathbb{R}_{+}} (|\nabla \phi|^2+b \phi^2) \mathrm{d} x-\int_{\mathbb{R}_{+}} \rho \phi \mathrm{d} x$$
which, subject to the boundary condition \eqref{initial-ori}, satisfies that
$$\frac{\mathrm{d}}{\mathrm{d}t} F[\rho, u,\phi]+\frac{\alpha}{\mu}\int_{\mathbb{R}_{+}}\rho u^2 \mathrm{d} x+\frac{1}{a}\int_{\mathbb{R}_{+}}|\phi_t|^2 \mathrm{d}x=0,$$ where $\rho G''(\rho)=p'(\rho)$. Thus the stationary solution satisfying $\frac{\mathrm{d}}{\mathrm{d}t} F[\rho, u,\phi]=0$ gives rise to $\rho u=0$ and $\phi_t=0$ in $\mathbb{R}_{+}$. Since we are interested in non-constant profile for $\rho$, $u\equiv 0$ is the only (physical) stationary profile for the velocity $u$. Therefore stationary solutions of \eqref{ori-eq-a}--\eqref{ori-eq-c} without vacuum must possess the form $(\bar{\rho},0,\bar{\phi})$, where $(\bar{\rho},\bar{\phi})$ satisfies}
\begin{subnumcases}
\displaystyle p(\bar{\rho}) _{x}=\mu \bar{\rho}\bar{\phi}_{x},\ \ x \in \mathbb{R}_{+}, \label{stat-1}\\
\displaystyle \bar{\phi} _{xx}+a \bar{\rho}-b \bar{\phi}=0,\label{stat-2}\\
\displaystyle \bar{\phi}(0)=\phi _{-},\label{stat-c}\\
\displaystyle \lim _{x \rightarrow +\infty}\left( \bar{\rho},\bar{\phi} \right)=(\rho_{+},\phi _{+}).  \label{stat-3}
\end{subnumcases}
Here the pressure $p$ satisfies \eqref{p-condition} and the constants $\rho_{+} $ and $\phi_{\pm} $ are the same as in \eqref{initial-ori} and \eqref{far-field}.

Then our first result concerning the existence and uniqueness of solutions to the stationary problem \eqref{stat-1}--\eqref{stat-3} is given below.
\begin{theorem}\label{thm-sta-problem}
Let $ \rho _{+}>0 $ and $ \phi _{-}\neq \phi _{+} $ such that $ a \rho_+=b \phi _{+} $. If
\begin{gather}
     \displaystyle \phi _{-}-\phi _{+}+\int _{0}^{\rho _{+}}\frac{p'(s)}{\mu s}\mathrm{d} s>0,\label{fip-condi}
\end{gather}
then there is a unique constant $\rho_->0$ such that the problem \eqref{stat-1}--\eqref{stat-3} with \eqref{p-condition} admits a unique solution $ (\bar{\rho},\bar{\phi}) $ satisfying $\bar{\rho}(0)=\rho_-$ and
\begin{gather}\label{monotonic}
\displaystyle \begin{cases}
	\displaystyle \bar{\rho}'(x)<0,\, \bar{\phi}'(x)<0 &\text{ if }\phi _{-}>\phi _{+},\\[1mm]
	\displaystyle\bar{\rho}'(x)>0,\, \bar{\phi}'(x)>0 &\text{ if }\phi _{-}<\phi _{+}.
\end{cases}
\end{gather}
Moreover, if $ \left\vert \phi _{-}-\phi _{+}\right\vert $ is small enough, it holds that
\begin{gather}\label{con-stat-decay}
\displaystyle  \sum _{k=1}^{2}\left\vert \frac{\mathrm{d}^{k}}{\mathrm{d}x ^{k}}\left( \bar{\rho}, \bar{\phi}\right) \right\vert+\left\vert \bar{\rho}(x)-\rho _{+}\right\vert +\left\vert \bar{\phi}(x)-\phi _{+}\right\vert \leq C 		{\mathop{\mathrm{e}}}^{-\lambda x}\left\vert \phi _{-}- \phi _{+}\right\vert,\ \ x \geq 0
\end{gather}
for some constants $ C>0 $ and $ \lambda>0 $ which may depend on $ \rho _{+} $, $ a $ and $ b $, but independent of $ \phi _{-}-\phi _{+} $.

\end{theorem}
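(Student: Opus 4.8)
The plan is to reduce the system \eqref{stat-1}--\eqref{stat-3} to a single autonomous ODE for $\bar\phi$ and analyze its phase portrait. From \eqref{stat-1} and \eqref{p-condition} we may write $p(\bar\rho)_x = \mu\bar\rho\,\bar\phi_x$, i.e. $\frac{p'(\bar\rho)}{\mu\bar\rho}\bar\rho_x = \bar\phi_x$; introducing the primitive $Q(\rho):=\int_{\rho_+}^{\rho}\frac{p'(s)}{\mu s}\,\mathrm{d}s$ we get $(Q(\bar\rho))_x = \bar\phi_x$, hence $Q(\bar\rho) = \bar\phi - \phi_+ + Q(\rho_+) = \bar\phi - \phi_+$ after using the far-field condition \eqref{stat-3} and $Q(\rho_+)=0$. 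Since $Q'(\rho)=\frac{p'(\rho)}{\mu\rho}>0$ by \eqref{p-condition}, $Q$ is a strictly increasing $C^3$ bijection from $(0,\infty)$ onto its range, so this relation determines $\bar\rho = Q^{-1}(\bar\phi-\phi_+)=:R(\bar\phi)$ as a smooth increasing function of $\bar\phi$. Substituting into \eqref{stat-2} yields the scalar second-order ODE $\bar\phi_{xx} = b\bar\phi - a R(\bar\phi)=:g(\bar\phi)$, with $g(\phi_+)=b\phi_+-a\rho_+=0$ by the hypothesis $a\rho_+=b\phi_+$, so $\phi_+$ is an equilibrium. One computes $g'(\phi_+) = b - aR'(\phi_+) = b - a\cdot\frac{1}{Q'(\rho_+)} = b - \frac{a\mu\rho_+}{p'(\rho_+)}$, which is \emph{positive} precisely because \eqref{p-condition} gives $p'(\rho_+)>\frac{a\mu}{b}\rho_+$. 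Thus $\phi_+$ is a hyperbolic equilibrium of saddle type for the first-order system $(\bar\phi,\bar\phi_x)$, and there is a one-dimensional stable manifold through $(\phi_+,0)$.

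The next step is to show this stable manifold, parametrized as $\bar\phi_x = -\mathrm{sgn}(\bar\phi-\phi_+)\sqrt{2(G(\bar\phi)-G(\phi_+))}$ where $G'=g$, can be continued so that $\bar\phi$ ranges over a full interval with endpoint $\phi_-$ on one side. Multiplying $\bar\phi_{xx}=g(\bar\phi)$ by $\bar\phi_x$ and integrating gives the conserved energy $\tfrac12\bar\phi_x^2 = G(\bar\phi)-G(\phi_+)$ on the trajectory converging to $(\phi_+,0)$. I would show $G(\phi)-G(\phi_+)>0$ for $\phi\neq\phi_+$ in the relevant range: indeed $G'(\phi_+)=g(\phi_+)=0$ and $G''(\phi_+)=g'(\phi_+)>0$, so $G-G(\phi_+)$ has a strict local min at $\phi_+$, and monotonicity of $g$ (from monotonicity of $R$, hence of $\rho\mapsto p'(\rho)/\rho$ — here I may need to lean on smallness of $|\phi_--\phi_+|$ to stay in a neighborhood where $g'>0$) keeps it positive and gives $|\bar\phi_x|>0$ away from $\phi_+$. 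The solvability condition \eqref{fip-condi} enters exactly here: it is the statement that the target value $\phi_-$ lies on the correct side so that the trajectory through $(\phi_+,0)$, followed backward in $x$, reaches $\bar\phi=\phi_-$ at some finite $x$; rescaling so this happens at $x=0$ pins down $\rho_- := R(\phi_-) = Q^{-1}(\phi_--\phi_+)$, and \eqref{fip-condi} read through $Q^{-1}$ says $\phi_- - \phi_+ > -Q(\rho_+)\cdot$(something) ensuring $\rho_->0$, i.e. that $\bar\rho$ stays away from vacuum along the whole orbit. Uniqueness of $\rho_-$ and of $(\bar\rho,\bar\phi)$ follows from uniqueness of the stable manifold and the autonomous ODE theory (the trajectory is uniquely determined by the requirement of decay to the equilibrium).

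The monotonicity \eqref{monotonic} is then immediate: on the stable-manifold trajectory $\bar\phi_x$ has constant sign, equal to $-\mathrm{sgn}(\phi_--\phi_+)$ (since $\bar\phi$ moves monotonically from $\phi_-$ at $x=0$ toward $\phi_+$ at $x=\infty$), and $\bar\rho_x = R'(\bar\phi)\bar\phi_x$ has the same sign because $R'>0$. For the exponential decay estimate \eqref{con-stat-decay}, I would linearize at the saddle: writing $\psi=\bar\phi-\phi_+$, the stable eigenvalue is $-\lambda$ with $\lambda=\sqrt{g'(\phi_+)}=\sqrt{b-a\mu\rho_+/p'(\rho_+)}>0$, and standard stable-manifold / Gronwall estimates give $|\psi(x)|+|\psi'(x)|\le C e^{-\lambda x}|\psi(0)| = Ce^{-\lambda x}|\phi_--\phi_+|$ once $|\phi_--\phi_+|$ is small enough that the orbit stays in the linearization neighborhood; bootstrapping through $\bar\phi_{xx}=g(\bar\phi)$ and one more differentiation controls the second derivatives, and $|\bar\rho-\rho_+| = |R(\bar\phi)-R(\phi_+)|\le C|\psi|$, $|\bar\rho_x|, |\bar\rho_{xx}|$ likewise by the chain rule. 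The constants depend only on $g$ near $\phi_+$, hence only on $\rho_+,a,b$ (and $p'(\rho_+)$, which is determined by these through the structural relations), and not on $\phi_--\phi_+$.

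\textbf{Main obstacle.} The delicate point is the global continuation of the stable-manifold orbit all the way to $\bar\phi=\phi_-$ while keeping $\bar\rho=R(\bar\phi)$ strictly positive and keeping $g'>0$ along the orbit — i.e. verifying that condition \eqref{fip-condi} is exactly the sharp requirement for this, and handling the case where $|\phi_--\phi_+|$ is not small in the existence/uniqueness part (where one cannot simply work in a linearized neighborhood and must instead exploit global monotonicity of $Q$ and sign properties of $g$). The exponential-decay part, by contrast, is routine hyperbolic-equilibrium analysis once smallness is assumed.
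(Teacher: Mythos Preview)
Your proposal is correct and follows essentially the same strategy as the paper: both reduce \eqref{stat-1}--\eqref{stat-3} to a scalar autonomous second-order ODE via the primitive $Q=F(s):=\int_{\rho_+}^s \frac{p'(\tau)}{\mu\tau}\,\mathrm d\tau$, obtain a first integral (your energy $\tfrac12\bar\phi_x^2=G(\bar\phi)-G(\phi_+)$, the paper's $[F'(\bar\rho)\bar\rho_x]^2=G(\bar\rho)-G(\rho_+)$), identify the far-field state as a saddle, and read off existence, uniqueness, monotonicity, and exponential decay from the stable-manifold trajectory. The only substantive differences are cosmetic or technical: the paper chooses $\bar\rho$ as the primary variable while you choose $\bar\phi$ (these are equivalent through $\bar\phi-\phi_+=F(\bar\rho)$), and where you invoke the stable-manifold theorem to handle the square-root degeneracy at the equilibrium, the paper instead proves by hand, via L'H\^opital, that the reduced vector field $\mathcal D(\bar\rho)=-\sqrt{G(\bar\rho)-G(\rho_+)}/F'(\bar\rho)$ extends to a Lipschitz function at $\rho_+$.

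One correction to your self-assessment: the ``main obstacle'' you flag is less delicate than you fear. You do \emph{not} need smallness of $|\phi_--\phi_+|$ to keep $g'>0$. Indeed $g'(\phi)=b-aR'(\phi)=b-\dfrac{a\mu R(\phi)}{p'(R(\phi))}$, and condition \eqref{p-condition} says $p'(\rho)>\frac{a\mu}{b}\rho$ for \emph{every} $\rho>0$, so $g'(\phi)>0$ globally (equivalently, the paper's $H'(s)=bF'(s)-a>0$ for all $s>0$). Consequently $G-G(\phi_+)$ is strictly positive on both sides of $\phi_+$ throughout the domain of $R$, and the stable-manifold orbit continues until $\bar\phi$ leaves that domain, i.e.\ until $R(\bar\phi)\to 0^+$. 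Since the range of $Q$ on $(0,\rho_+]$ is $\big(-\!\int_0^{\rho_+}\frac{p'(s)}{\mu s}\,\mathrm ds,\,0\big]$, the target $\phi_-$ is reachable with $\rho_-=R(\phi_-)>0$ precisely when $\phi_--\phi_+>-\int_0^{\rho_+}\frac{p'(s)}{\mu s}\,\mathrm ds$, which is exactly \eqref{fip-condi}. So the existence/uniqueness part is genuinely global and requires no smallness; smallness of $|\phi_--\phi_+|$ is used only in the quantitative decay estimate \eqref{con-stat-decay}, as you correctly note.
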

\begin{remark}
 Under the condition \eqref{p-condition}, the integral $\int _{0}^{\rho _{+}}\frac{p'(s)}{\mu s}\mathrm{d} s  $ in \eqref{fip-condi} is positive, but not necessarily finite since $\frac{p'(s)}{\mu s} \rightarrow +\infty  $ as $ s \rightarrow 0 $ is possible. While in the case of $\int _{0}^{\rho _{+}}\frac{p'(s)}{\mu s}\mathrm{d} s =+\infty $, the condition \eqref{fip-condi} is free for any given $ \phi _{-} $ and $ \phi _{+} $.
\end{remark}

\vspace*{3mm}

Our second result is the asymptotic stability of the stationary solutions obtained in Theorem \ref{thm-sta-problem}, which is stated in the following theorem.
\begin{theorem}\label{thm-stability}
Let the conditions in Theorem \ref{thm-sta-problem} hold and define
\begin{gather*}
 \displaystyle  \varphi _{0}=-\int _{x}^{\infty}\left( \rho _{0}(y)-\bar{\rho}(y) \right)\mathrm{d}y, \ \ \  \Phi _{0}= \phi _{0}(x)-\bar{\phi}.
 \end{gather*}
 If $\varphi _{0}\in H ^{3} $, $ m _{0}\in H ^{2} $ and $ \Phi _{0}\in H ^{4} $ with $\inf\limits_{x \in \mathbb{R}_{+}}\rho_0(x)>0$ (namely $\inf \limits_{x \in \mathbb{R}_{+}}\left\{  \varphi _{0x}+\bar{\rho}  \right\}>0)$, then there exists a constant $ \delta _{0}>0 $ such that if
\begin{gather*}
\displaystyle  \|\varphi _{0}\|_{3}+\|m _{0}\|_{2}+\|\Phi _{0}\|_{4} +\left\vert \phi _{-}- \phi _{+}\right\vert \leq \delta _{0},
\end{gather*}
the problem \eqref{ori-eq-a}--\eqref{ori-eq-c} subject to the initial-boundary conditions \eqref{initial-ori}--\eqref{far-field} admits a unique classical solution $ (\rho(x,t),m(x,t), \phi(x,t)) $ in $ \mathbb{R}_{+}\times (0, \infty) $ satisfying $\inf\limits_{x \in \mathbb{R}_{+}}\rho(x)>0$ for any $t>0$ and
\begin{gather}\label{large-time-thm}
\displaystyle \lim _{t \rightarrow \infty} \sup _{x \in \mathbb{R}_{+}}\left\vert (\rho,m, \phi)(x,t)-(\bar{\rho},0,\bar{\phi})(x)\right\vert \rightarrow 0.
\end{gather}
\end{theorem}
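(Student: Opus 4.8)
The plan is to prove Theorem~\ref{thm-stability} by the classical energy method for stability of nonlinear waves: reformulate in perturbation variables, establish local well-posedness, close a global \emph{a priori} estimate by carefully designed energy functionals, and extract the large-time limit from the time-integrability of the dissipation. First I would reformulate. By \eqref{far-field}, set $\varphi(x,t)=-\int_{x}^{\infty}(\rho-\bar\rho)(y,t)\,\mathrm{d}y$, so that $\varphi_{x}=\rho-\bar\rho$ and, by \eqref{ori-eq-a} together with $m(\infty,t)=0$, $\varphi_{t}=-m$; also set $\Phi=\phi-\bar\phi$. Subtracting \eqref{stat-1}--\eqref{stat-2} from \eqref{ori-eq-b}--\eqref{ori-eq-c} and using $p(\bar\rho)_{x}=\mu\bar\rho\bar\phi_{x}$ recasts the problem as a damped hyperbolic equation $\varphi_{tt}+\alpha\varphi_{t}-p'(\bar\rho)\varphi_{xx}-\mu\bar\rho\,\Phi_{x}=(\text{linear terms with coefficients }\bar\rho',\bar\phi',\bar\rho'')+(\text{quadratic terms})$ coupled to the parabolic equation $\Phi_{t}-\Phi_{xx}+b\Phi-a\varphi_{x}=0$, with $\varphi_{t}(0,t)=0$ (hence $\varphi(0,t)\equiv\varphi_{0}(0)$), $\Phi(0,t)=0$, and decay at $x=\infty$; by Theorem~\ref{thm-sta-problem} the inhomogeneous coefficients are $O(|\phi_{-}-\phi_{+}|)$ and decay like $\mathrm{e}^{-\lambda x}$. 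A standard linearization/iteration scheme then yields a unique local classical solution with $\varphi\in C([0,T];H^{3})$, $m\in C([0,T];H^{2})$, $\Phi\in C([0,T];H^{4})$ and $\inf\rho>0$ on a short interval.

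The core of the argument is a global \emph{a priori} estimate. Put $N(T)^{2}=\sup_{0\le t\le T}(\|\varphi\|_{3}^{2}+\|m\|_{2}^{2}+\|\Phi\|_{4}^{2})$, assume \emph{a priori} that $N(T)\le\varepsilon$ with $\varepsilon$ small (so that $\rho\sim\rho_{+}$ and $u=m/\rho$ is well defined), and aim to show
\[ N(T)^{2}+\int_{0}^{T}\mathcal D(t)\,\mathrm{d}t\le C\big(\|\varphi_{0}\|_{3}^{2}+\|m_{0}\|_{2}^{2}+\|\Phi_{0}\|_{4}^{2}+|\phi_{-}-\phi_{+}|^{2}\big)+C(\varepsilon+\delta_{0})\int_{0}^{T}\mathcal D(t)\,\mathrm{d}t+CN(T)^{3}, \]
where $\mathcal D(t)\sim\|\varphi_{x}\|_{2}^{2}+\|m\|_{2}^{2}+\|\Phi\|_{4}^{2}$; choosing $\varepsilon,\delta_{0}$ small absorbs the second term and a continuity argument upgrades the hypothesis to $N(T)\le C\delta_{0}$ for all $T$. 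This estimate would be built in three layers. \emph{(i)~Basic energy.} Use the relative form of the functional $F$ from Section~\ref{sec-mainresult} (the functional $F$ itself diverges on $\mathbb{R}_{+}$, but the relative quantity is finite): $\mathcal L:=F[\rho,u,\phi]-F[\bar\rho,0,\bar\phi]-(\text{part linear in the perturbation})$, where the subtracted linear part turns out to be constant in $t$. Integrating by parts using the stationary equations and $\Phi(0,t)=0$ gives $\mathcal L\sim\int_{\mathbb{R}_{+}}\big(\tfrac{p'(\bar\rho)}{\bar\rho}\varphi_{x}^{2}+\rho u^{2}+|\Phi_{x}|^{2}+b\Phi^{2}\big)-2\int_{\mathbb{R}_{+}}\varphi_{x}\Phi$, and the cross term is dominated \emph{precisely} when $p'(\bar\rho)>\tfrac{a\mu}{b}\bar\rho$, i.e.\ under \eqref{p-condition}; thus $\mathcal L\sim\|\varphi_{x}\|^{2}+\|m\|^{2}+\|\Phi\|_{1}^{2}$, and the dissipation identity $\tfrac{\mathrm{d}}{\mathrm{d}t}\mathcal L+\tfrac{\alpha}{\mu}\int\rho u^{2}+\tfrac1a\int\phi_{t}^{2}=0$ yields boundedness of $\mathcal L$ and $\int_{0}^{T}(\|m\|^{2}+\|\Phi_{t}\|^{2})\,\mathrm{d}t\le C\mathcal L(0)$. \emph{(ii)~Antiderivative estimate.} Test the $\varphi$-equation with $\varphi$ and the $\Phi$-equation with $\Phi$ and $-\Phi_{xx}$, and add with suitable weights: the hyperbolic block produces $\int p'(\bar\rho)\varphi_{x}^{2}$ at the expense of $\|\varphi_{t}\|^{2}=\|m\|^{2}$ (available from (i)) and a term $\mu\int\bar\rho\,\varphi_{x}\Phi$, whereas the parabolic block produces $b\int\Phi^{2}+\|\Phi_{x}\|^{2}$ at the expense of $\int\varphi_{x}\Phi$; the two cross terms can be absorbed into the two positive terms simultaneously \emph{precisely} when $b\,p'(\bar\rho)>a\mu\bar\rho$, i.e.\ again \eqref{p-condition}, thereby recovering dissipation of $\|\varphi_{x}\|^{2}$ and $\|\Phi\|^{2}$. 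The boundary terms at $x=0$ are controlled using $\varphi_{t}(0,t)=0$, $\Phi(0,t)=0$, the trace inequality $|f(0)|^{2}\le2\|f\|\,\|f_{x}\|$, and an exponentially weighted Hardy inequality $\int_{0}^{\infty}\mathrm{e}^{-\lambda x}|f|^{2}\,\mathrm{d}x\le C_{\lambda}\int_{0}^{\infty}\mathrm{e}^{-\lambda x}|f_{x}|^{2}\,\mathrm{d}x+C_{\lambda}|f(0)|^{2}$, which, together with the $\mathrm{e}^{-\lambda x}$-decay and $O(|\phi_{-}-\phi_{+}|)$-smallness of $\bar\rho',\bar\phi'$ from \eqref{con-stat-decay}, keeps all coefficient-dependent linear terms and boundary terms below $C(\varepsilon+\delta_{0})\mathcal D+C(\text{data})$. \emph{(iii)~Higher-order estimates.} Differentiate the system in $x$ (twice) and in $t$ and repeat (i)--(ii) at each level; parabolic smoothing of the $\Phi$-equation (testing against $\Phi_{xxxx}$, $\Phi_{xxt}$, etc.) closes the $\Phi$-hierarchy, while $a\varphi_{x}=\Phi_{t}-\Phi_{xx}+b\Phi$, $m=-\varphi_{t}$ and the momentum equation recover the remaining derivatives of $\varphi$ and $m$; nonlinearities are estimated by Sobolev and Moser-type inequalities and absorbed into $CN(T)^{3}$.

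With the uniform bound in hand, the standard continuation argument extends the local solution to $[0,\infty)$, with $\sup_{t\ge0}(\|\varphi\|_{3}+\|m\|_{2}+\|\Phi\|_{4})\le C\delta_{0}$, $\int_{0}^{\infty}\mathcal D(t)\,\mathrm{d}t\le C\delta_{0}^{2}$, and in particular $\inf_{x\in\mathbb{R}_{+},\,t\ge0}\rho>0$. For the large-time behaviour, $\int_{0}^{\infty}\mathcal D\,\mathrm{d}t<\infty$ together with the boundedness of $\tfrac{\mathrm{d}}{\mathrm{d}t}\mathcal D(t)$ (a consequence of the uniform bounds and the equations) forces $\mathcal D(t)\to0$ as $t\to\infty$, hence $\|\varphi_{x}(t)\|_{1}+\|m(t)\|_{1}+\|\Phi(t)\|_{1}\to0$; then $H^{1}(\mathbb{R}_{+})\hookrightarrow L^{\infty}(\mathbb{R}_{+})$ gives $\sup_{x\in\mathbb{R}_{+}}\big(|\rho-\bar\rho|+|m|+|\phi-\bar\phi|\big)\to0$, which is \eqref{large-time-thm}.

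I expect the main obstacle to be the \emph{a priori} estimate, specifically the interplay of the two linear coupling terms $\mu\bar\rho\,\Phi_{x}$ and $a\varphi_{x}$: the hyperbolic block carries no dissipation for $\varphi_{x}$ and the parabolic block provides no control of $\|\Phi\|$ except at the cost of $\varphi_{x}$, so these dissipations can only be produced jointly, and the algebra closes \emph{only} thanks to the structural hypothesis \eqref{p-condition}, which has to be invoked repeatedly (positive-definiteness of the energy and the sign of the combined dissipation). Interlocked with this is the delicate treatment of the $x=0$ boundary terms generated by the antiderivative reformulation --- in particular those carrying the conserved constant $\varphi(0,t)=\varphi_{0}(0)$ --- for which the exponentially weighted Hardy inequality and the exponential spatial decay of the steady state from Theorem~\ref{thm-sta-problem} are essential, as is the bookkeeping needed to disentangle the mutually coupled low- and high-order estimates.
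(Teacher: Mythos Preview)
Your proposal is essentially correct and follows the same route as the paper: antiderivative reformulation, local existence, a bootstrap \emph{a priori} estimate built from testing the $\varphi$-equation against $\varphi$ and $\varphi_t$ and the $\Phi$-equation against $\tfrac{\mu}{a}\bar\rho\Phi$ and $\tfrac{\mu}{a}\bar\rho\Phi_t$ (where the quadratic form $p'(\bar\rho)\varphi_x^2-2\mu\bar\rho\Phi\varphi_x+\tfrac{\mu b}{a}\bar\rho\Phi^2$ is coercive exactly under \eqref{p-condition}), then higher-order estimates by differentiation, and finally the large-time limit from time-integrability of the dissipation.

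Two minor discrepancies are worth noting. First, you carry the constant $\varphi(0,t)=\varphi_0(0)$ as a potential boundary obstruction, but the paper simply imposes $\varphi(0,t)=0$ as part of the compatibility conditions (Remark~2.2); with this, all boundary terms at $x=0$ at lowest order vanish directly, and at higher order the identity $\Phi_{xx}(0,t)=-a\varphi_x(0,t)$ together with trace bounds handles the rest. Second, your layer~(i) invokes the relative form of the Lyapunov functional $F$, whereas the paper proceeds by direct multipliers throughout and never uses $F$ in the estimates; both reach the same coercivity via \eqref{p-condition}, so this is a stylistic rather than substantive difference.
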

\begin{remark}
With the condition $ \Phi _0 \in H ^{4} $, we can define the initial values of $ \phi _{t} $ and $ \phi _{tt} $ through the equation for $ \phi $. That is
\begin{align}
 \displaystyle \Phi _{t0}&: =\phi _{t0}=(\Phi _{0})_{xx}+a (\varphi _{0})_{x}-b\Phi _{0},\label{fida-t-initial}\\
  \displaystyle  \displaystyle \Phi _{tt0}&:=\phi _{tt0}= (\Phi _{t0})_{xx}-a (m _{0})_{x}-b \Phi _{t0}.\label{fida-tt-intial}
 \end{align}
 These initial values of time derivatives are of importance in deriving the higher-order estimates in section 4. Furthermore, we always assume that the initial data is compatible with the boundary conditions at $ x=0 $.
\end{remark}

\section{Stationary problem (Proof of Theorem \ref{thm-sta-problem})}\label{sec-stationary-solution}
In this section, we shall study the stationary problem \eqref{stat-1}--\eqref{stat-3} and complete the proof of Theorem \ref{thm-sta-problem}. To this end, we first reformulate our problem \eqref{stat-1}--\eqref{stat-3}, and then prove the existence and uniqueness of solutions. Finally, we derive the monotone and decay properties of solutions.

\subsection{Reformulation of our problem} We start by proving the following lemma, which plays a key role in the reformulation of our problem.
\begin{lemma}\label{lem-stat-axu}
If $ f $ is a solution to the problem
\begin{gather}\label{eq-f}
\displaystyle \begin{cases}
 \displaystyle f _{x}=\omega(f(x)),\ \ x \in \mathbb{R}_{+},\\
\displaystyle f(+\infty)=k _{0},
\end{cases}
\end{gather}
where $ \omega $ is a continuous function, $ k _{0}$ is a constant. Then we have
\begin{gather*}
\displaystyle \lim _{x \rightarrow +\infty}f _{x}=\omega(k _{0})=0.
\end{gather*}

\end{lemma}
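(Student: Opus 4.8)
The plan is to observe that the differential equation itself already pins down the limit of $f_x$, and then to exclude a nonzero value by an elementary growth argument. First I would note that, being a solution of \eqref{eq-f}, $f$ is differentiable on $\mathbb{R}_+$ with $f_x(x)=\omega(f(x))$; since $\omega$ is continuous and $f(x)\to k_0$ as $x\to+\infty$, the composition satisfies $\omega(f(x))\to\omega(k_0)$. Consequently the limit $L:=\lim_{x\to+\infty}f_x(x)=\omega(k_0)$ exists.

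It then remains to show $L=0$. Suppose not; without loss of generality assume $L>0$ (the case $L<0$ is symmetric). Choose $X>0$ such that $f_x(x)\ge L/2$ for all $x\ge X$, and integrate from $X$ to $x$ to get $f(x)\ge f(X)+\tfrac{L}{2}(x-X)$ for $x\ge X$. Letting $x\to+\infty$ forces $f(x)\to+\infty$, which contradicts $f(x)\to k_0\in\mathbb{R}$. Hence $L=0$, i.e. $\lim_{x\to+\infty}f_x=\omega(k_0)=0$.

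This argument presents no genuine obstacle; the only point that deserves care is that the existence of $\lim_{x\to+\infty}f_x$ is not postulated but is \emph{forced} by the equation together with the continuity of $\omega$, after which the Barbalat-type estimate above closes the proof. In the sequel this lemma will be applied with $f$ taken to be $\bar{\rho}$, $\bar{\phi}$ (or suitable combinations thereof) and $\omega$ the nonlinearity arising from \eqref{stat-1}--\eqref{stat-2}, yielding both the compatibility of the far-field states and the vanishing of the first derivatives of the stationary profile at $+\infty$, which is exactly what is needed to reformulate and solve the stationary problem.
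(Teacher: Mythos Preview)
Your proof is correct and matches the paper's argument essentially step for step: both first use continuity of $\omega$ together with $f(x)\to k_0$ to identify $\lim_{x\to+\infty}f_x=\omega(k_0)$, and then rule out a nonzero limit by integrating the lower bound $f_x\ge \omega(k_0)/2$ beyond some large $X$ to force $f(x)\to+\infty$, a contradiction. The only differences are cosmetic (you name the limit $L$ and phrase the growth step via $f_x$, while the paper phrases it via $\omega(f(x))$), and your closing remarks about the intended application are accurate.
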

\begin{proof}
Since $ \omega $ is continuous, and $ \displaystyle\lim _{x \rightarrow +\infty}f(x)=k _{0} $, we have $ \displaystyle\lim _{x \rightarrow +\infty}f _{x}=\omega(k _{0}) $. It remains to show $ \omega(k _{0})=0 $. We proof this by contradiction. Supposing that $ \omega(k _{0})\neq 0 $, without loss of generality, we assume $ \omega(k _{0})>0 $. Thanks to the continuity of $ \omega $ and $ \displaystyle\lim _{x \rightarrow +\infty}f(x)=k _{0} $, there exists a constant $ X _{0} \in \mathbb{R}_{+} $ such that for any $ x \geq X _{0} $, $ \omega(f(x))>\frac{\omega(k _{0})}{2}>0 $. This along with $ \eqref{eq-f}_{1} $ implies that
\begin{align*}
\displaystyle f(x)=f(X _{0})+\int _{X _{0}}^{x}\omega(f(x))\mathrm{d}x \geq  f(X _{0})+\frac{\omega(k _{0})}{2}(x -X _{0}) \rightarrow +\infty\ \ \mbox{as}\ \ x \rightarrow +\infty,
\end{align*}
which contradicts the fact $ f(+\infty)=k _{0} $. Hence, $ \omega(k _{0})=0 $. The proof of Lemma \ref{lem-stat-axu} is complete.

\end{proof}
Define
\begin{align}\label{F-defi}
\displaystyle F(s):=\int_{\rho _{+}} ^{s}\frac{p'(\tau)}{\mu \tau}\mathrm{d}\tau
\end{align}
for any $ s >0 $. Clearly, $ F(\rho _{+})=0 $. We claim that under the conditions $ \eqref{p-condition} $, \eqref{fip-condi} and  $ \rho _{+}>0 $, there exists a unique constant $ \rho _{-} >0$ such that
\begin{align}\label{rho-fu}
\displaystyle F(\rho _{-})=\phi _{-}-\phi _{+}
\end{align}
and
\begin{align}\label{fip-to-rho}
\displaystyle \phi _{-}>\phi _{+}  \Longleftrightarrow \rho _{-}>\rho _{+}~~(\mathrm{resp}.~\phi _{-}<\phi _{+}  \Longleftrightarrow \rho _{-}<\rho _{+}).
\end{align}
Indeed, in view of \eqref{p-condition}, we know that
\begin{align}\label{F-dervi}
\displaystyle F'(s) =\frac{p'(s)}{s \mu}> \frac{a}{b}> 0.
\end{align}
This implies that the function $ F(s) $ is strictly monotonically increasing. Furthermore, we have $ F(s)<0 $ if $ s<\rho _{+} $, and $ \displaystyle\lim _{s \rightarrow +\infty} F(s)=+\infty $. For the case $ \phi _{-}>\phi _{+} $, since $ F(\rho _{+})=0<\phi _{-}-\phi _{+}<\infty $, then there exists a unique constant $ \rho _{-} \in(\rho _{+},+\infty) $ such that \eqref{rho-fu} holds. For the case $ \phi _{-}< \phi _{+} $, we have $ \phi _{-}-\phi _{+}<0 $. If $  \int _{0}^{\rho _{+}}\frac{p'(\tau)}{\mu \tau}\mathrm{d}\tau=\infty $, we know that $\displaystyle \lim _{s \rightarrow 0}F(s)=- \infty<\phi _{-}-\phi _{+}<0= F(\rho _{+}) $, then similar to the case $ \phi_{-}>\phi _{+} $, there exists a unique constant $ \rho _{-} \in(0,\rho _{+}) $ such that $ F(\rho _{-})=\phi _{-}-\phi _{+} $. Now it remains to consider the case when $ \phi _{-}<\phi _{+} $ and $ \int _{0}^{\rho _{+}}\frac{p'(\tau)}{\mu \tau}\mathrm{d}\tau<\infty $. In this case, since the $ F(s) $ is continuous, monotonic and bounded below, we can extend $ F(s) $ by defining $\displaystyle F(0):=\lim _{s \rightarrow 0}F(s)>-\infty $. Then the extended function $ F(s) $ is continuous on $ [0, \rho _{+}] $. Furthermore, from \eqref{fip-condi}, we get $ F(0)<\phi _{-}-\phi _{+}<0=F(\rho _{+}) $. Hence, there exists a unique constant $ \rho _{-} \in(0,\rho _{+}) $ such that $ F(\rho _{-})=\phi _{-}-\phi _{+} $. Then \eqref{rho-fu} is proved. Moreover, with the help of \eqref{rho-fu} and \eqref{F-dervi}, we immediately get \eqref{fip-to-rho}. We thus finish the proof of the claim.

To proceed, assume that $ (\bar{\rho},\bar{\phi}) $ is a classical solution to \eqref{stat-1}--\eqref{stat-3} with $ \bar{\rho}>0 $. Dividing \eqref{stat-1} by $ \bar{\rho} $ and integrating the resulting equation over $ (x,+\infty) $, we get
 \begin{gather}\label{barfip}
\displaystyle \bar{\phi}(x)= F(\bar{\rho}(x))-F(\rho _{+})+\phi _{+},
\end{gather}
where $ F(s) $ is as in \eqref{F-defi}. Sending $ x \rightarrow 0 ^{+} $ along with \eqref{stat-c}, \eqref{rho-fu} and the fact $ F(\rho _{+})=0 $, we get
\begin{align}
\displaystyle \lim _{x \rightarrow 0 ^{+}}F(\bar{\rho}(x))=F(\rho _{-}). \nonumber
\end{align}
By using the monotonicity and continuity of $ F(s) $, we further have that
\begin{align}
\displaystyle \bar{\rho}(0)=\lim _{x \rightarrow 0 ^{+}}\bar{\rho}(x)=\lim _{x \rightarrow 0 ^{+}}F ^{-1}(F(\bar{\rho}(x)))=F ^{-1}(F (\rho _{-}))=\rho _{-}. \nonumber
\end{align}
Inserting \eqref{barfip} into \eqref{stat-2}, we get
\begin{gather}\label{STAT-REFOR}
\displaystyle [F(\bar{\rho})]_{xx}= b[F(\bar{\rho})-F(\rho _{+})]+b \phi _{+}
- a \bar{\rho} .
\end{gather}
Multiplying \eqref{STAT-REFOR} by $ 2F ^{\prime}(\bar{\rho})\bar{\rho}_{x} $, it follows that
\begin{align}\label{equ-multided}
\displaystyle 2F ^{\prime}(\bar{\rho})\bar{\rho}_{x}[F'(\bar{\rho})\bar{\rho}_{x}]_{x}& =2\left\{ b[F(\bar{\rho})-F(\rho _{+})]+b \phi _{+} - a \bar{\rho}\right\}=:2 H(\bar{\rho})
F ^{\prime}(\bar{\rho})\bar{\rho}_{x},
\end{align}
with
\begin{gather}\label{H-defi}
\displaystyle H(s): = b[F(s)-F(\rho _{+})]+b \phi _{+}
- a s .
\end{gather}
Thus,
\begin{gather}\label{modified-bar-rhoeq}
\displaystyle [F'(\bar{\rho})\bar{\rho}_{x}]^{2}=G(\bar{\rho})+C _{0} \geq 0,\ \ x \in \mathbb{R}_{+}
\end{gather}
for some constant $ C _{0} $ and some function $ G(s) $ with
\begin{gather}\label{G-deriva}
\displaystyle  G'(s)=2F'(s)H(s).
\end{gather}
By virtue of \eqref{p-condition} and \eqref{H-defi}, we get
\begin{gather}\label{H-deriva}
\displaystyle H'(s) =b F'(s)-a>0
\end{gather}
for any $ s >0 $. Thanks to the condition $ a \rho _{+}= b \phi _{+} $, it holds that $ H(\rho _{+})=0 $. This along with \eqref{H-deriva} yields that
\begin{gather}\label{H-rho-plus}
 \displaystyle H(s)>H(\rho _{+})=0 \ \mbox{if}\ s>\rho _{+}\  \mbox{and}\   H(s)<H(\rho _{+})=0\ \mbox{if}\ s< \rho _{+}.
 \end{gather}
Then by \eqref{F-dervi}, \eqref{G-deriva} and \eqref{H-rho-plus}, we get $ G'(s)>0 $ if $ s >\rho _{+} $ and $ G'(s)<0 $ if $ s <\rho _{+} $. This gives
 \begin{gather}\label{G-positive}
 \displaystyle G(s)-G(\rho _{+})>0
 \end{gather}
 for any $s \neq \rho _{+} $. We claim that $ C _{0}=-G(\rho _{+}) $. Otherwise, we have $ C _{0}<-G(\rho _{+}) $ or $ C _{0}>-G(\rho _{+}) $.  If $ C _{0}<-G(\rho _{+}) $, by the continuity of $ G $ and $ \bar{\rho} $, there exists a constant $ K _{0} >0$ such that if $ x \geq K _{0} $,
\begin{gather*}
\displaystyle  G(\bar{\rho}) <\frac{G(\rho _{+})-C _{0}}{2}<- C _{0}.
\end{gather*}
Then $ G(\bar{\rho})+C _{0}<0 $ for $ x \geq K _{0} $. This contradicts to \eqref{modified-bar-rhoeq}. If $ C _{0}>-G(\rho _{+}) $, using \eqref{G-positive}, we get
 $ \bar{\rho}_{x}\neq 0  $ for any $ x \in \mathbb{R}_{+} $. Therefore, for any $ x \in \mathbb{R}_{+} $, it holds that
 \begin{align*}
 \displaystyle \bar{\rho }_{x}= -\frac{\sqrt{G(\bar{\rho})+C _{0}}}{F'(\bar{\rho})}\  \mbox{if}\ \rho _{-}>\rho _{+}\ \  \mbox{and}\ \ \bar{\rho}_{x}= -\frac{\sqrt{G(\bar{\rho})+C _{0}}}{F'(\bar{\rho})}\  \mbox{if}\ \rho _{-}<\rho _{+}.
 \end{align*}
With the fact $ \displaystyle \bar{\rho}(+\infty)=\rho_{+} $ and Lemma \ref{lem-stat-axu}, we have $ C _{0}=-G(\rho _{+}) $. This is a contradiction. Hence, we have $ C _{0}=-G(\rho _{+}) $ and
 \begin{gather}\label{rho-eq-square}
\displaystyle \begin{cases}
  [F'(\bar{\rho})\bar{\rho}_{x}]^{2}=G(\bar{\rho})-G(\rho _{+}),\ \ x \in \mathbb{\mathbb{R}}_{+},\\
  \bar{\rho}(0)=\rho _{-},\ \ \bar{\rho}(+\infty)=\rho _{+}.
\end{cases}
\end{gather}
This together with \eqref{F-dervi} and \eqref{G-positive} implies that
 $ \bar{\rho}_{x} \leq 0$ if $ \rho _{-}>\rho _{+} $ and $ \bar{\rho}_{x} \geq 0 $ if $ \rho _{+}>\rho _{-} $, and that
 \begin{gather}\label{deivative-zero}
 \displaystyle  \bar{\rho}_{x}(x)=0 \ \mbox{if and only if}\ \bar{\rho}(x)=\rho _{+}
 \end{gather}
for any $ x \in \mathbb{R}_{+} $. Hence, we can solve $ \bar{\rho}_{x} $ from \eqref{rho-eq-square} that
 \begin{align}
 \displaystyle \displaystyle \bar{\rho }_{x}= -\frac{\sqrt{G(\bar{\rho})-G(\rho _{+})}}{F'(\bar{\rho})}\  \mbox{if}\ \phi _{-}>\phi _{+}\ \  \mbox{and}\ \ \bar{\rho}_{x}= -\frac{\sqrt{G(\bar{\rho})-G(\rho _{+})}}{F'(\bar{\rho})}\  \mbox{if}\ \phi _{-}<\phi _{+}, \nonumber
 \end{align}
 where we have used \eqref{fip-to-rho}.
\\
Summing up, we have the following lemma.
 \begin{lemma}\label{lem-stat-property}
 Under the conditions of Theorem \ref{thm-sta-problem}, if $ (\bar{\rho},\bar{\phi}) $ is a classical solution to the problem \eqref{stat-1}--\eqref{stat-3} satisfying $ \bar{\rho}(x)>0 $ for any $ x \in \mathbb{R}_{+} $, then $ (\bar{\rho},\bar{\phi}) $ is also a solution to the following problem:
 \begin{gather}\label{stat-equiv-syst}
 \displaystyle   \begin{cases}
    \displaystyle  \bar{\rho}_{x}= -\frac{\sqrt{G(\bar{\rho})-G(\rho _{+})}}{F'(\bar{\rho})} \ \mbox{if}\  \phi _{-}>\phi _{+}\ \  (\mathrm{reps.}\  \bar{\rho}_{x}= \frac{\sqrt{G(\bar{\rho})-G(\rho _{+})}}{F'(\bar{\rho})}\ \mbox{if}\ \phi _{-}<\phi _{+}),\\
  \displaystyle   \displaystyle \bar{\phi}(x)=F(\bar{\rho}(x))-F(\rho _{+})+\phi _{+},\\
  \displaystyle \bar{\rho}(0)=\rho _{-},\ \ \bar{\rho}(+ \infty)=\rho _{+}.
  \end{cases}
 \end{gather}
 Here $ F(s) $ and $ G(s) $ are given in \eqref{F-defi} and \eqref{G-deriva}, respectively, and $ \rho _{-}>0 $ is determined by \eqref{rho-fu}.
  \end{lemma}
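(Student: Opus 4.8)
The plan is to reduce the second-order boundary value problem \eqref{stat-1}--\eqref{stat-3} to the autonomous first-order system \eqref{stat-equiv-syst} in three steps, essentially assembling the computations made above and pinning down one integration constant. First I would recover the pointwise relation between $\bar\phi$ and $\bar\rho$: dividing \eqref{stat-1} by $\bar\rho>0$ gives $[F(\bar\rho)]_x=\bar\phi_x$ with $F$ as in \eqref{F-defi}, and integrating over $(x,+\infty)$ using $(\bar\rho,\bar\phi)(+\infty)=(\rho_+,\phi_+)$ and $F(\rho_+)=0$ yields \eqref{barfip}, which is exactly the second line of \eqref{stat-equiv-syst}. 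Letting $x\to0^+$ in \eqref{barfip} and using \eqref{stat-c} together with the definition \eqref{rho-fu} of $\rho_-$ gives $\lim_{x\to0^+}F(\bar\rho(x))=F(\rho_-)$, and since $F$ is continuous and strictly increasing by \eqref{F-dervi}, this forces $\bar\rho(0)=\rho_-$, the third line of \eqref{stat-equiv-syst}.

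Next I would produce the first integral for $\bar\rho$. Substituting \eqref{barfip} into \eqref{stat-2} closes the equation for $\bar\rho$ as the second-order ODE \eqref{STAT-REFOR}; multiplying by the integrating factor $2F'(\bar\rho)\bar\rho_x=2[F(\bar\rho)]_x$ makes the left-hand side a perfect derivative $\big([F'(\bar\rho)\bar\rho_x]^2\big)_x$ and the right-hand side $\big(G(\bar\rho)\big)_x$ with $G'=2F'H$ and $H$ as in \eqref{H-defi}. Integrating gives $[F'(\bar\rho)\bar\rho_x]^2=G(\bar\rho)+C_0$ for some constant $C_0$, i.e.\ \eqref{modified-bar-rhoeq}.

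The step I expect to be the main obstacle is to show $C_0=-G(\rho_+)$, which I would do by contradiction exploiting the far-field condition. The hypothesis $a\rho_+=b\phi_+$ gives $H(\rho_+)=0$, and $H'=bF'-a>0$ by \eqref{H-deriva}, so $G'(s)=2F'(s)H(s)$ has the sign of $s-\rho_+$, hence $G(s)-G(\rho_+)>0$ for $s\neq\rho_+$. If $C_0<-G(\rho_+)$, continuity of $G\circ\bar\rho$ forces $G(\bar\rho)+C_0<0$ for all large $x$, contradicting \eqref{modified-bar-rhoeq}; if $C_0>-G(\rho_+)$, then $\bar\rho_x$ never vanishes, so $\bar\rho_x=\mp\sqrt{G(\bar\rho)+C_0}/F'(\bar\rho)$ keeps a fixed sign, and applying Lemma \ref{lem-stat-axu} to $f=\bar\rho$ with the right-hand side as $\omega$ forces $\omega(\rho_+)=0$, i.e.\ $G(\rho_+)+C_0=0$, a contradiction. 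Thus $C_0=-G(\rho_+)$ and \eqref{modified-bar-rhoeq} becomes \eqref{rho-eq-square}. Finally, taking the square root in \eqref{rho-eq-square} and using that $\bar\rho_x$ has constant sign on the connected set $\{\bar\rho\neq\rho_+\}$ by \eqref{deivative-zero}, I would select the sign by comparing the boundary values $\bar\rho(0)=\rho_-$ and $\bar\rho(+\infty)=\rho_+$ through the equivalence \eqref{fip-to-rho}, arriving at the first line of \eqref{stat-equiv-syst}. Collecting the three relations completes the proof.
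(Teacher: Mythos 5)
Your proposal is correct and follows essentially the same route as the paper: integrate \eqref{stat-1} to get \eqref{barfip}, deduce $\bar\rho(0)=\rho_-$ from the strict monotonicity of $F$, reduce \eqref{stat-2} to the first integral \eqref{modified-bar-rhoeq} via the integrating factor $2F'(\bar\rho)\bar\rho_x$, and pin down $C_0=-G(\rho_+)$ by the same two-sided contradiction (one side from positivity of the squared derivative, the other from Lemma~\ref{lem-stat-axu} applied to the autonomous ODE). The paper, like your proposal, determines the sign of $\bar\rho_x$ at this stage somewhat heuristically and defers the rigorous sign discussion to Lemma~\ref{lem-stat-equivalent}, so no gap beyond what the paper itself leaves here.
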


In the following, we shall show that the problem \eqref{stat-equiv-syst} is indeed equivalent to \eqref{stat-1}--\eqref{stat-3}.
\begin{lemma}[Reformulation]\label{lem-stat-equivalent}
Suppose that the conditions of Theorem \ref{thm-sta-problem} hold. Then $ (\bar{\rho}(x), \bar{\phi}(x)) $ is a classical solution to the problem \eqref{stat-1}--\eqref{stat-3} satisfying $ \bar{\rho}>0 $, if and only if it is a classical solution to the problem \eqref{stat-equiv-syst}.
\end{lemma}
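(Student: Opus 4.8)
The ``only if'' part is already contained in Lemma~\ref{lem-stat-property}, so the plan is to establish the converse implication: if $(\bar\rho,\bar\phi)$ is a classical solution of the reformulated system \eqref{stat-equiv-syst}, then $\bar\rho>0$ on $\mathbb R_+$ and $(\bar\rho,\bar\phi)$ solves the stationary problem \eqref{stat-1}--\eqref{stat-3}. The first step is to pin down the positivity and monotonicity of $\bar\rho$. Since $F'(s)=p'(s)/(\mu s)>0$ for $s>0$ and, by \eqref{G-positive}, $G(\bar\rho)-G(\rho_+)\ge 0$ with equality precisely when $\bar\rho=\rho_+$, the first equation in \eqref{stat-equiv-syst} forces $\bar\rho_x$ to keep a fixed sign wherever $\bar\rho$ is positive. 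Combined with the conditions $\bar\rho(0)=\rho_-$ and $\bar\rho(+\infty)=\rho_+$ in \eqref{stat-equiv-syst}, this traps $\bar\rho(x)$ in the closed interval with endpoints $\rho_-$ and $\rho_+$, so $\bar\rho\ge\min\{\rho_-,\rho_+\}>0$ on all of $\mathbb R_+$ and $\bar\rho$ is monotone, decreasing to $\rho_+$ when $\phi_->\phi_+$ and increasing to $\rho_+$ when $\phi_-<\phi_+$.

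I would then dispose of the parts of \eqref{stat-1}--\eqref{stat-3} that follow directly from the algebraic relation $\bar\phi=F(\bar\rho)-F(\rho_+)+\phi_+$ in \eqref{stat-equiv-syst}. Differentiating it gives $\bar\phi_x=F'(\bar\rho)\bar\rho_x=\frac{p'(\bar\rho)}{\mu\bar\rho}\bar\rho_x=\frac{1}{\mu\bar\rho}[p(\bar\rho)]_x$, which is precisely \eqref{stat-1}. Evaluating the relation at $x=0$ and using $\bar\rho(0)=\rho_-$, $F(\rho_+)=0$ and $F(\rho_-)=\phi_--\phi_+$ from \eqref{rho-fu} gives $\bar\phi(0)=\phi_-$, i.e.\ \eqref{stat-c}; letting $x\to+\infty$ and using $\bar\rho(+\infty)=\rho_+$ together with the continuity of $F$ gives $\bar\phi(+\infty)=\phi_+$, which with $\bar\rho(+\infty)=\rho_+$ yields \eqref{stat-3}.

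The only substantial point is to recover \eqref{stat-2}. Squaring the first equation in \eqref{stat-equiv-syst} and inserting $\bar\phi_x=F'(\bar\rho)\bar\rho_x$ gives the first integral $\bar\phi_x^2=G(\bar\rho)-G(\rho_+)$ on $\mathbb R_+$. On the open set $\{\bar\phi_x\neq 0\}$ one has, on each connected interval, $\bar\phi_x=\pm\sqrt{G(\bar\rho)-G(\rho_+)}$ with a fixed sign, hence $\bar\phi\in C^2$ there; differentiating the first integral and using $G'(s)=2F'(s)H(s)$ from \eqref{G-deriva} gives $2\bar\phi_x\bar\phi_{xx}=2F'(\bar\rho)H(\bar\rho)\bar\rho_x=2H(\bar\rho)\bar\phi_x$, and cancelling $\bar\phi_x$ together with $H(\bar\rho)=b[F(\bar\rho)-F(\rho_+)]+b\phi_+-a\bar\rho=b\bar\phi-a\bar\rho$ (by \eqref{H-defi} and the algebraic relation) yields $\bar\phi_{xx}+a\bar\rho-b\bar\phi=0$ there. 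It then remains to treat the degenerate set $Z:=\{\bar\phi_x=0\}$. By the first integral and \eqref{G-positive}, $Z=\{\bar\rho=\rho_+\}$, and the monotonicity of $\bar\rho$ with $\bar\rho(+\infty)=\rho_+$ forces $Z$ to be either empty or a half line $[x_0,\infty)$ on which $\bar\rho\equiv\rho_+$ and hence $\bar\phi\equiv\phi_+$; there $\bar\phi_{xx}+a\bar\rho-b\bar\phi=0+a\rho_+-b\phi_+=0$ by the normalization $a\rho_+=b\phi_+$. At the junction $x_0$ the one-sided limits of $\bar\phi_{xx}$ both vanish, so $\bar\phi\in C^2(\mathbb R_+)$ and \eqref{stat-2} holds on all of $\mathbb R_+$; a standard bootstrap ($\bar\phi_{xx}=b\bar\phi-a\bar\rho\in C^1$, so $\bar\phi\in C^3$, whence $\bar\rho=F^{-1}(\bar\phi-\phi_+)\in C^3$) then makes $(\bar\rho,\bar\phi)$ a genuine classical solution. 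I expect the crux to be exactly this last point: because the square root in \eqref{stat-equiv-syst} degenerates on $Z$, a priori $\bar\rho$ and $\bar\phi$ are only $C^1$ there, so one must first secure the trapping and monotonicity of $\bar\rho$ to know that $Z$ is a half line before the $C^2$-regularity at $x_0$ can be recovered and \eqref{stat-2} obtained in the classical sense.
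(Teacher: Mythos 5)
Your proof is correct and takes a genuinely different route from the paper for the only nontrivial step, which is recovering \eqref{stat-2}. The paper reduces the whole problem to showing $\bar\rho_x\neq 0$ on $\mathbb R_+$: it proves that $\mathcal D(\bar\rho)=-\sqrt{G(\bar\rho)-G(\rho_+)}/F'(\bar\rho)$ is Lipschitz on $[\rho_+,\rho_-]$ (a delicate computation because the square root degenerates at $\rho_+$, requiring L'H\^opital's rule to identify the one-sided derivative $\mathcal D'_+(\rho_+)$), and then invokes uniqueness for the ODE $\bar\rho_x=\mathcal D(\bar\rho)$ to exclude $\bar\rho$ ever hitting $\rho_+$; once $\bar\rho_x\neq 0$, \eqref{stat-2} follows by reversing the chain \eqref{equ-multided}--\eqref{rho-eq-square}, since the earlier division by $\bar\rho_x$ is then legitimate. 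You instead work with the first integral $\bar\phi_x^2=G(\bar\rho)-G(\rho_+)$ directly: on the open set where $\bar\phi_x\neq 0$ you differentiate and cancel $\bar\phi_x$ to get $\bar\phi_{xx}=H(\bar\rho)=b\bar\phi-a\bar\rho$; on the (possibly nonempty) degenerate set $Z=\{\bar\rho=\rho_+\}$, which monotonicity forces to be either empty or a half-line $[x_0,\infty)$, the equation holds trivially via $a\rho_+=b\phi_+$; and $C^2$-matching at $x_0$ follows from the derivative-limit lemma since both one-sided limits of $\bar\phi_{xx}$ are $H(\rho_+)=0$. Your route is more elementary and avoids the Lipschitz claim entirely for this lemma; what the paper's approach buys is that the Lipschitz property of $\mathcal D$ is exactly what is reused in Section 3.2 to run Picard--Lindel\"of and establish existence and uniqueness for \eqref{pro-affil}, so establishing it once inside Lemma \ref{lem-stat-equivalent} amortizes the work across Theorem \ref{thm-sta-problem}.
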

\begin{proof}
 In view of Lemma \ref{lem-stat-property}, it remains to show that if $ (\bar{\rho}, \bar{\phi}) $ is a solution to the problem \eqref{stat-equiv-syst}, then $ (\bar{\rho}, \bar{\phi}) $ solves the problem \eqref{stat-1}--\eqref{stat-3}. By using $ \eqref{stat-equiv-syst}_{3} $ and \eqref{F-dervi}, one can easily derive \eqref{stat-1}. Thanks to \eqref{rho-fu} and $ \eqref{stat-equiv-syst}_{3} $, we have $ \bar{\phi}(0)=\phi _{-} $ and $ \bar{\phi}(+\infty)=\phi _{+} $. To show \eqref{stat-2}, by \eqref{equ-multided}, \eqref{rho-eq-square} and $ \eqref{stat-equiv-syst}_{3} $, it suffices to show that $ \bar{\rho}_{x}\neq 0 $ for any $ x \in \mathbb{R}_{+} $. We prove this for the case $ \rho _{-}>\rho _{+}~(i.e., \phi _{-}>\phi _{+}) $, and the proof for the case $ \rho _{-}<\rho _{+} ~(i.e.,\phi _{-}<\phi _{+}  )$ is similar. Since $ \rho _{-}>\rho _{+} $, we have $ \bar{\rho}_{x} \leq 0 $ for any $ x \in \mathbb{R}_{+} $. Denote
 \begin{gather*}
 \displaystyle \mathcal{D}(\bar{\rho}):= -\frac{\sqrt{G(\bar{\rho})-G(\rho _{+})}}{F'(\bar{\rho})}.
 \end{gather*}
 We claim that $ \mathcal{D}(\bar{\rho}) $ is Lipschitz continuous on $ [\rho _{+}, \rho _{-}] $. With this claim, we can prove that $ \bar{\rho}_{x} <0$ for $ x \in \mathbb{R}_{+} $, and hence finish the proof. Indeed, if there exists a point $  x _{0}\in \mathbb{R}_{+} $ such that $ \bar{\rho} _{x}(x _{0})=0 $, then from \eqref{deivative-zero}, we have $ \bar{\rho}(x _{0})=\rho _{+} $. This implies that $ \bar{\rho} $ is a solution to the following problem
\begin{gather}\label{contra-eq}
\begin{cases}
  \displaystyle \rho_{x}^{\ast}=\mathcal{D}(\rho ^{\ast}),\ \  0 \leq x \leq x _{0},\\
  \displaystyle \rho ^{\ast}( x_{0})=\rho _{+}.
\end{cases}
  \end{gather}
 Since $ \mathcal{D}(\bar{\rho}) $ is Lipschitz continuous on $ [\rho _{+},\rho _{-}]  $, the problem \eqref{contra-eq} admits a unique solution on $ [0,x _{0}] $. While $ \rho ^{\ast}\equiv \rho _{+} $ is also a  solution to \eqref{contra-eq}, and obviously, $ \bar{\rho}\not\equiv \rho _{+} $. This is a contradiction. Therefore, $ \bar{\rho}_{x} <0 $  for any $ x \in \mathbb{R}_{+} $. Now it remains to prove the claim that $ \mathcal{D}(\bar{\rho}) $ is Lipschitz continuous on $ [\rho _{+}, \rho _{-}] $. With the help of \eqref{p-condition}, \eqref{F-dervi} and \eqref{G-positive}, we know that $ \mathcal{D}(\bar{\rho}) $ is differentiable if $ \bar{\rho}\neq \rho _{+} $. Furthermore, a direct computation gives
\begin{align}\label{D-deriv-com}
\displaystyle &\displaystyle  \mathcal{D}'(\bar{\rho}) =\frac{\sqrt{G(\bar{\rho})-G(\rho _{+})}}{[F'(\bar{\rho})]^{2}}F''(\bar{\rho})- \frac{H(\bar{\rho})}{\sqrt{G(\bar{\rho})-G(\rho _{+})}}.
\end{align}
By using \eqref{G-deriva}, \eqref{H-deriva} and L'H\^{o}pital's rule, we have
\begin{align}\label{Fisr-limi-stat}
\displaystyle  \lim _{\,\bar{\rho}\rightarrow \rho _{+}^{\,+}}\frac{H ^{2}(\bar{\rho})}{G(\bar{\rho})-G(\rho _{+})} =  \lim _{\,\bar{\rho}\rightarrow \rho _{+}^{\,+}}\frac{2H'(\bar{\rho})H(\bar{\rho})}{G'(\bar{\rho})}= \lim _{\,\bar{\rho}\rightarrow \rho _{+}^{\,+}}\frac{H'(\bar{\rho})}{F'(\bar{\rho})}=\frac{bF'(\rho _{+})-a}{F'(\rho _{+})}>0.
\end{align}
From \eqref{H-rho-plus} and \eqref{G-positive}, we get $ \frac{H(\bar{\rho})}{\sqrt{G(\bar{\rho})-G(\rho _{+})}}>0 $ for $ \bar{\rho}>\rho _{+} $. This along with \eqref{Fisr-limi-stat} yields that
\begin{gather*}
\displaystyle  \lim _{\,\bar{\rho}\rightarrow \rho _{+}^{\,+}}\frac{H(\bar{\rho})}{\sqrt{G(\bar{\rho})-G(\rho _{+})}}= \left( \lim _{\,\bar{\rho}\rightarrow \rho _{+}^{\,+}}\frac{H ^{2}(\bar{\rho})}{G(\bar{\rho})-G(\rho _{+})} \right)^{\frac{1}{2}}=\sqrt{\frac{aF'(\rho _{+})-b}{F'(\rho _{+})}}.
\end{gather*}
Therefore,
\begin{align*}
\displaystyle  \lim _{\,\bar{\rho}\rightarrow \rho _{+}^{\,+}}\mathcal{D}'(\bar{\rho})&=-\lim _{\,\bar{\rho}\rightarrow \rho _{+}^{\,+}}\frac{H(\bar{\rho})}{\sqrt{G(\bar{\rho})-G(\rho _{+})}} =-\sqrt{\frac{aF'(\rho _{+})-b}{F'(\rho _{+})}}.
 \end{align*}
 Notice that $ \frac{\mathcal{D}(\bar{\rho})-\mathcal{D}(\rho _{+})}{\bar{\rho}-\rho _{+}}=\frac{\mathcal{D}(\bar{\rho})}{\bar{\rho}-\rho _{+}} <0$ for $ \bar{\rho}>\rho _{+} $, and that
 \begin{align*}
 \displaystyle \lim _{\bar{\rho}\rightarrow \rho _{+}}\frac{\left\vert \mathcal{D}(\bar{\rho})-\mathcal{D}(\rho _{+})\right\vert ^{2}}{\left\vert \bar{\rho}-\rho _{+} \right\vert ^{2}}&= \frac{1}{[F'(\rho _{+})]^2} \lim _{\bar{\rho}\rightarrow \rho _{+}}\frac{G(\bar{\rho})-G(\rho _{+})}{\left \vert \bar{\rho
 }-\rho _{+} \right \vert^{2} } =\lim _{\bar{\rho}\rightarrow \rho _{+}} \frac{H(\bar{\rho})}{\bar{\rho}-\rho _{+}}
  \nonumber \\
  &\displaystyle =\lim _{\bar{\rho}\rightarrow \rho _{+}} \frac{H'(\bar{\rho})}{F'(\rho _{+})}=\frac{aF'(\rho _{+})-b}{F'(\rho _{+})}>0,
 \end{align*}
 due to \eqref{G-deriva}, \eqref{H-deriva} and L'H\^{o}pital's rule, we have
 \begin{align*}
  \displaystyle \mathcal{D}_{+}'(\rho _{+})=\lim _{\bar{\rho}\rightarrow \rho _{+}^{\,+}} \frac{\mathcal{D}(\bar{\rho})-\mathcal{D}(\rho _{+})}{\bar{\rho}-\rho _{+}}=-\left( \lim _{\bar{\rho}\rightarrow \rho _{+}}\frac{\left\vert \mathcal{D}(\bar{\rho})-\mathcal{D}(\rho _{+})\right\vert ^{2}}{\left\vert \bar{\rho}-\rho _{+} \right\vert ^{2}} \right)^{\frac{1}{2}}= -\sqrt{\frac{bF'(\rho _{+})-a}{F'(\rho _{+})}},
  \end{align*}
where $  \mathcal{D}_{+}'(\rho _{+}) $ is the right derivative of $ \mathcal{D}(\bar{\rho}) $ at $ \rho _{+} $. We thus have $ \displaystyle\lim _{\,\bar{\rho}\rightarrow \rho _{+}^{\,+}}\mathcal{D}'(\bar{\rho})= \mathcal{D}_{+}'(\rho _{+})$. This in combination with \eqref{D-deriv-com} yields that $ \mathcal{D}'(\bar{\rho}) $ is continuous on $ [\rho _{+},\rho _{-}] $, and thus $ \left\vert \mathcal{D}'(\bar{\rho})\right\vert \leq C(\rho _{-},\rho _{+}) $ for some constant $C(\rho _{-},\rho _{+})>0  $ depending on $ \rho _{-} $ and $ \rho _{+} $. This implies that $ \mathcal{D}(\bar{\rho}) $ is Lipschitz continuous on $ [\rho _{+},\rho _{-}]  $. The proof of the present lemma is complete.
\end{proof}

\subsection{Existence and uniqueness of solutions} In this section, we will prove that the problem \eqref{stat-1}--\eqref{stat-3} admits a unique solution $ (\bar{\rho},\bar{\phi}) $ with $ \bar{\rho}>0 $.
Thanks to Lemma \ref{lem-stat-equivalent}, it now suffices to consider the problem \eqref{stat-equiv-syst}. As before, we focus only on the case $ \rho _{-}>\rho _{+}~(i.e.,~\phi _{-}>\phi _{+}) $, the proof for the case $ \rho _{-}<\rho _{+}~(i.e.,~\phi _{-}<\phi _{+}) $ is similar and so omitted. Let us begin with the following ODE problem
\begin{gather}\label{pro-affil}
  \begin{cases}
    \displaystyle  \bar{\rho}_{x}=\mathcal{D}(\bar{\rho}),\ \ x >0, \\
  \displaystyle \bar{\rho}(0)=\rho _{-}.
  \end{cases}
\end{gather}
By the Lipschitz continuity of $ \mathcal{D}(\bar{\rho}) $ on $ [\rho _{+}, \rho _{-}] $,  we conclude that the problem \eqref{pro-affil} admits a unique solution on $ [0,X _{\ast}) $ for some $ X _{\ast}\in \mathbb{R}_{+} $. Then by the contradiction argument and discussions in Step 1 on the uniqueness of solutions to \eqref{contra-eq}, we get $ \bar{\rho}(x)>\rho _{+} $ for any $ x \in  [0,X _{\ast})  $. This, along with the standard extension theorem for ordinary differential equations, implies that the solution $ \bar{\rho} $ to the problem \eqref{pro-affil} exists globally in $ \mathbb{R}_{+} $, and for any $ x \in \mathbb{R}_{+} $, $ \bar{\rho}(x)>\rho _{+} $. In addition, notice from \eqref{deivative-zero} that $ \bar{\rho}_{x}(x)=0  $ if and only if $ \bar{\rho}(x)=\rho _{+} $, we have that
\begin{align}\label{stat-rho-neg-deri}
\displaystyle   \bar{\rho}_{x}<0\ \ \mbox{for any }x \in \mathbb{R}_{+},
\end{align}
 and that $ \displaystyle\lim _{x \rightarrow +\infty}\bar{\rho}(x) $ exists. Denoting $ \displaystyle \bar{\rho}(+\infty):=\lim _{x \rightarrow +\infty}\bar{\rho}(x) $, from Lemma \ref{lem-stat-axu}, we obtain $ G(\bar{\rho}(+\infty))-G(\rho _{+})=0 $. This combined with \eqref{G-positive} give rises to $ \bar{\rho}(+\infty)=\rho _{+} $. With $ \bar{\rho}(x) $ at hand, we can define $ \bar{\phi}(x) $ from $ \eqref{stat-equiv-syst}_{2} $. Clearly, $ (\bar{\rho},\bar{\phi}) $ is a solution to \eqref{stat-equiv-syst} for $ \rho _{-}>\rho _{+} $. Finally, since $ \rho _{+} \leq\bar{\rho}(x)\leq \rho _{-} $, the uniqueness of solutions can be proved by the Lipschitz continuity of $ \mathcal{D}(\bar{\rho}) $ on $ [\rho _{+}, \rho _{-}] $.

\subsection{Monotonicity and decay properties}
Recalling \eqref{fip-to-rho} and \eqref{stat-rho-neg-deri}, we get $ \bar{\rho}'(x)<0 $ if $ \phi _{-}>\phi _{+} $. In a manner similar to the derivation of \eqref{stat-rho-neg-deri}, we  have $ \bar{\rho}'(x)>0 $ if $ \phi _{-}<\phi _{+} $. With the help of \eqref{F-dervi}, \eqref{barfip} and the properties of $ \bar{\rho}_{x} $, we have
\begin{gather*}
\displaystyle \bar{\phi}_{x}<0 \ \mbox{if}\ \phi _{-}>\phi _{+}\ \ \mbox{and}\ \ \bar{\phi}_{x}>0 \ \mbox{if}\ \phi _{-}<\phi _{+}.
\end{gather*}
This gives \eqref{monotonic}. Now let us turn to the decay properties of the solution. By using \eqref{rho-fu} and \eqref{F-dervi}, we get
\begin{align}\label{small-fip-rho-stat}
\displaystyle  \left\vert \phi _{-}-\phi _{+}\right\vert=\left\vert \int _{\rho _{-}}^{\rho _{+}}F'(s)\mathrm{d}s\right\vert \geq \frac{a}{b}\left\vert \rho _{-}-\rho _{+}\right\vert.
\end{align}
Thus $ \left\vert \phi _{-}-\phi _{+} \right\vert\ll 1 $ implies $ \left\vert \rho _{-}-\rho _{+}\right\vert \ll 1 $. In the following, without loss of generality, we assume that $ [\rho _{+},\rho _{-}] \subset [\rho _{+}, \rho _{+}+1] $, and thus for any continuous function $ f $ defined on $ \mathbb{R}_{+} $, $ \sup _{x \in [\rho _{+},\rho _{-}]}f(x) $ depends only on $ \rho _{+} $. If $ \phi _{-}>\phi _{+}~(i.e., \rho _{-}>\rho _{+}) $, recalling \eqref{monotonic} and \eqref{stat-equiv-syst}, we have
\begin{gather}\label{decay-eq}
\displaystyle  \rho _{-}>\bar{\rho}>\rho _{+} \ \mbox{and}\ \ \bar{\rho}_{x}= -\frac{\sqrt{G(\bar{\rho})-G(\rho _{+})}}{F'(\bar{\rho})}
\end{gather}
for any $ x \in \mathbb{R}_{+} $. Owing to \eqref{p-condition}, \eqref{F-dervi}, \eqref{G-deriva} and the condition $ a \rho _{+}=b \phi _{+} $, we get
\begin{align}\label{G-twic-deri}
\displaystyle G''(s)&=2[F'(s)H(s)]'=2F'(s)H'(s)+2F''(s)H(s)
 \nonumber \\
 & \displaystyle=2F'(s)\left( bF'(s)-a \right) + \frac{2}{\mu s ^{2}}\left( s p''(s)- p'(s) \right) \left( b[F(s)-F(\rho _{+})]+b \phi _{+}
- a s \right)
 \nonumber \\
 & \displaystyle \geq 2F'(s)\left( bF'(s)-a \right) - \left\vert s- \rho _{+}\right\vert \sup _{\iota \in [\rho _{+}, \rho _{-}]}\frac{2}{\mu \iota ^{2}}\left\vert \left( \left\vert \iota p''(\iota)\right\vert+ p'(\iota) \right)\right\vert \left(b \sup _{\iota \in [\rho _{+}, \rho _{-}]}F'(\iota) +a  \right)
 \nonumber \\
    & \geq 2F'(s)\left( bF'(s)-a \right) -C(\rho _{+})\left\vert s -\rho _{+}\right\vert
\end{align}
for any $ s \in[\rho _{+}, \rho _{-}] $, where $ C(\rho _{+}) >0$ is a constant depending on $ \rho _{+} $. From \eqref{G-deriva} and \eqref{H-rho-plus}, we get $ G'(\rho _{+})=0 $. This combined with \eqref{F-dervi}, \eqref{G-twic-deri} and the Taylor expansion implies that
\begin{align}\label{G-minu}
\displaystyle G(\bar{\rho})-G(\rho _{+})&=G(\bar{\rho})-G(\rho _{+})-G'(\rho _{+})(\bar{\rho}-\rho _{+})=
\int _{0}^{1}\int _{0}^{s}G''(\tau(\bar{\rho}-\rho _{+})+\rho _{+})\mathrm{d}\tau \mathrm{d}s \left\vert \bar{\rho}-\rho _{+}\right\vert ^{2}
 \nonumber \\
 & \geq \frac{1}{2}\left[ 2F'(s)\left( bF'(s)-a \right) -C(\rho _{+})\left\vert \bar{\rho} -\rho _{+}\right\vert  \right]\left\vert \bar{\rho}- \rho _{+}\right\vert ^{2}
  \nonumber \\
  & \geq C(\rho _{+})\left\vert \bar{\rho}- \rho _{+}\right\vert ^{2},
\end{align}
provided $ \left\vert \rho _{-}-\rho _{+}\right\vert$ is suitably small, where $ C(\rho _{+}) $ is a positive constant depending on $ \rho _{+} $. Combining \eqref{G-minu} with \eqref{decay-eq}, we get
\begin{align*}
\displaystyle (\bar{\rho}-\rho _{+})_{x}=- \frac{\sqrt{G(\bar{\rho})-G(\rho _{+})}}{F'(\bar{\rho})}  \leq - \frac{\sqrt{G(\bar{\rho})-G(\rho _{+})}}{\displaystyle\sup _{x \in \mathbb{R}_{+}} F'(\bar{\rho})} \leq -\lambda _{1}(\bar{\rho}- \rho _{+})
\end{align*}
for some constant $ \lambda _{1}>0 $ depending on $ \rho _{+} $, provided $ \left\vert \rho _{-}-\rho _{+}\right\vert$ is suitably small, where we have used \eqref{p-condition} and \eqref{F-dervi}. Consequently, with \eqref{small-fip-rho-stat}, we have the following decay estimate:
\begin{gather}\label{decay-concl}
 \displaystyle  |\bar{\rho}-\rho _{+}|=\bar{\rho}-\rho _{+}  \leq \left( \rho _{-}-\rho _{+} \right)     {\mathop{\mathrm{e}}}^{- \lambda _{1} x}=\left\vert\rho _{-}-\rho _{+} \right\vert {\mathop{\mathrm{e}}}^{- \lambda _{1} x} \leq \frac{b}{a}\left\vert \phi _{-}-\phi _{+}\right\vert{\mathop{\mathrm{e}}}^{- \lambda _{1} x} ,\ \ x \geq 0.
 \end{gather}
 For the case $ \phi _{-}<\phi _{+}~(i.e.,~ \rho _{-}<\rho _{+}) $, it holds that
\begin{gather}\label{decay-eq-1}
\displaystyle  \rho _{-}<\bar{\rho}<\rho _{+} \ \mbox{and}\ \ \bar{\rho}_{x}= \frac{\sqrt{G(\bar{\rho})-G(\rho _{+})}}{F'(\bar{\rho})}.
\end{gather}
Using \eqref{F-dervi}, \eqref{G-minu} and \eqref{decay-eq-1}, we get
\begin{align*}
\displaystyle  \bar{\rho}_{x} \geq \frac{\sqrt{C(\rho _{+})}\left\vert \bar{\rho}-\rho _{+}\right\vert}{\displaystyle \sup _{x \in [\rho _{-}, \rho _{+}]} F'(\bar{\rho})} \geq -\lambda _{2}(\bar{\rho}-\rho _{+}),
\end{align*}
that is,
\begin{align*}
\displaystyle (\rho _{+}- \bar{\rho})_{x}+\lambda _{2}(\rho _{+}- \bar{\rho})\leq 0
\end{align*}
for some constant $ \lambda _{2}>0 $ depending on $ \rho _{+} $, provided $ \left\vert \rho _{-}-\rho _{+}\right\vert$ is suitably small. It thus holds that
\begin{align*}
\displaystyle  \left\vert\rho _{+}- \bar{\rho} \right\vert=\rho _{+}- \bar{\rho} \leq \left( \rho _{+}-\rho _{-} \right)    {\mathop{\mathrm{e}}}^{-\lambda _{2}x} \leq\frac{b}{a}\left\vert \phi _{-}-\phi _{+}\right\vert{\mathop{\mathrm{e}}}^{- \lambda _{2} x} ,\ \ x \geq 0.
\end{align*}
Finally, by \eqref{stat-2}, \eqref{rho-fu}, \eqref{barfip}, \eqref{STAT-REFOR} and \eqref{decay-concl}, we get \eqref{con-stat-decay}. The proof is complete. \hfill $ \square $

\section{Global existence and asymptotic stability}\label{sec:stability}
In this section, we are devoted to studying the asymptotic stability of the unique stationary solution to \eqref{ori-eq-a}--\eqref{ori-eq-a} obtained in Section \ref{sec-stationary-solution}. To this end, we first reformulate the problem with the technique of taking anti-derivative for $ \rho $.
\subsection{Reformulation of problem} 
\vspace*{3mm}

 Combining \eqref{ori-eq-a}--\eqref{ori-eq-c} with \eqref{ori-eq-a}--\eqref{ori-eq-c}, we have
 \begin{subnumcases}
 \displaystyle   \displaystyle (\rho -\bar{\rho})_{t}+m_{x}=0,\label{eq-perturbed-one-a}\\[1mm]
   \displaystyle m _{t}+\left( \frac{m ^{2}}{\rho} \right)_{x}+\left[ p(\rho)-p(\bar{\rho}) \right]_{x}=\mu \rho \phi _{x}-\mu \bar{\rho}\bar{\phi}_{x}- \alpha m,\\[1mm]
   \displaystyle (\phi- \bar{\phi})_{t}=\left( \phi- \bar{\phi} \right)_{xx}+a(\rho- \bar{\rho})-b(\phi- \bar{\phi}).
 \end{subnumcases}
  It follows from \eqref{eq-perturbed-one-a} that
\begin{gather*}
\displaystyle \int _{\mathbb{R}_{+}}(\rho- \bar{\rho}) \mathrm{d}x= \int _{\mathbb{R}_{+}}(\rho _{0}- \bar{\rho}) \mathrm{d}x=\varphi _{0}(0),
\end{gather*}
which, together with the condition $ \varphi _{0}\in H _{0}^{3}(\mathbb{R}_{+}) $ in Theorem \ref{thm-stability}, gives
\begin{gather*}
\displaystyle \int _{\mathbb{R}_{+}} \left( \rho- \bar{\rho} \right) \mathrm{d}x=0.
\end{gather*}
Defining the perturbation function $ (\varphi,\psi, \Phi) $
 \begin{gather}\label{pertu-variable}
 \displaystyle \varphi =- \int_{x}^{\infty} \left( \rho- \bar{\rho} \right)\mathrm{d}y, \ \ \psi =m,\ \ \Phi= \phi - \bar{\phi},
 \end{gather}
 with
 \begin{gather*}
 \displaystyle  (\varphi _{0},\psi _{0}, \Phi _{0}):=\left(- \int _{x}^{\infty}(\rho _{0}-\bar{\rho})\mathrm{d}y, m _{0},\phi  _{0}- \bar{\phi}\right),
 \end{gather*}
 we get the reformulated problem:
 \begin{gather*}
 \displaystyle \begin{cases}
   \displaystyle \varphi _{t}+\psi=0,\\[1mm]
   \displaystyle \psi _{t}+\left( \frac{\psi ^{2}}{\varphi _{x}+\bar{\rho}} \right)_{x} + p(\varphi _{x}+\bar{\rho})_{x}-p(\bar{\rho})_{x} = \mu \left( \rho \phi _{x}-\bar{\rho}\bar{\phi}_{x} \right)- \alpha \psi,\\[7pt]
   \displaystyle \Phi _{t}=\Phi _{xx}+a \varphi _{x}-b \Phi, \\
   \displaystyle \left. (\varphi,\psi, \Phi)\right \vert _{t=0}=(\varphi _{0},\psi _{0}, \Phi _{0}),\\
   \displaystyle \left. \left( \varphi, \psi,\Phi \right)\right \vert_{x=0}=(0,0,0),
 \end{cases}
 \end{gather*}
 and its linearized problem around $ \bar{\rho} $ is
 \begin{subnumcases}
   \displaystyle \varphi _{tt}-\left( p'(\bar{\rho})\varphi _{x} \right)_{x}+\alpha \varphi _{t}=\mathcal{F}+\mathcal{H},\label{problem-linearized-simple-a}\\[1mm]
   \displaystyle  \Phi _{t}-\Phi _{xx}+b \Phi=a \varphi _{x}, \label{problem-linearized-simple-c}\\[1mm]
   \displaystyle \left. (\varphi, \varphi _{t},\Phi)\right \vert _{t=0}=(\varphi _{0}, -\psi _{0}, \Phi _{0}),
    \\[1mm]
   \displaystyle \left. \left( \varphi, \varphi_t, \Phi\right)\right \vert_{x=0}=(0,0,0), \label{problem-linearized-simple-e}\\[1mm]
       \psi=-\varphi _{t}, \label{problem-linearized-simple-b}
 \end{subnumcases}
where
 \begin{gather}
 \mathcal{F}=\mathcal{F}_{1}+\mathcal{F}_{2},\ \ \mathcal{H}= \left( \frac{\varphi _{t} ^{2}}{\varphi _{x}+\bar{\rho}} \right)_{x},\label{F-defi-simple}
  \end{gather}
and
 \begin{gather}\label{F-1-2}
 \displaystyle  \mathcal{F}_{1}=[p(\varphi _{x}+\bar{\rho})-p(\bar{\rho})-p'(\bar{\rho})\varphi _{x}]_{x},\ \ \mathcal{F}_{2}=-\mu[\varphi _{x}\Phi _{x}+\varphi _{x}\bar{\phi}_{x}+\bar{\rho}\Phi _{x}].
 \end{gather}
 To proceed, we define the solution space of the problem \eqref{problem-linearized-simple-a}--\eqref{problem-linearized-simple-e} as follows:
\begin{align*}
\displaystyle X(0,T)=\left\{ (\varphi, \psi,\Phi)\vert\right.& \left.\varphi \in C ([0,T];H ^{3})\cap C ^{1}([0,T];H ^{2}),\, \psi \in C ([0,T];H ^{2})\cap C ^{1}([0,T];H ^{1}), \right.
 \nonumber \\
 \qquad&\left.  \Phi \in C([0,T];H ^{4}) \cap C ^{1}([0,T];H ^{2})\right\}
\end{align*}
for any $ T \in (0,+\infty) $.

Since we are interested in the case where the solution has no vacuum, naturally we require that $ \inf\limits_{x \in \mathbb{R}_{+}}\rho_0(x)>0$, namely
\begin{gather}\label{positivity}
\displaystyle \inf _{x \in \mathbb{R}_{+}}\left\{  \varphi _{0x}+\bar{\rho}  \right\}>0.
\end{gather}

For simplicity, we denote
\begin{gather*}
\displaystyle \mathcal{N}_{0}:= \|\varphi _{0}\|_{3}^{2} +\|\psi _{0}\|_{2}^{2} +\|\Phi _{0}\|_{4}^{2}.
\end{gather*}
Then by the standard parabolic theory and fixed point theorem (cf.~\cite{MATSUMARA-1977}), we have the following local existence result.
\begin{proposition}[Local existence]\label{prop-local}
Let the conditions of Theorem \ref{thm-sta-problem} hold. Assume $ \varphi _{0}\in H ^{3} $, $ \psi _{0}\in H ^{2} $ and $ \Phi _{0}\in H ^{4}$ such that \eqref{positivity} holds.
Then there exists a positive constant $ T _{0} $ depending on $ \mathcal{N}_{0} $ such that the initial-boundary value problem \eqref{problem-linearized-simple-a}--\eqref{problem-linearized-simple-e} admits a unique solution $ (\varphi(x,t),\psi(x,t),\Phi(x,t)) \in X(0,T _{0}) $ such that  $ \displaystyle  \inf _{x \in \mathbb{R}_{+}}\left\{  \varphi _{x}+\bar{\rho}  \right\}>0$ for an $0 \leq t \leq T _{0}$ and
 \begin{gather*}
\displaystyle \sup _{t \in [0,T _{0}]}\left( \|\varphi\|_{3} ^{2}+\|\psi\|_{2}^{2}+\|\Phi\|_{4}^{2} \right)  \leq 2 \mathcal{N}_{0}.
\end{gather*}
\end{proposition}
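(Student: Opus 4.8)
The plan is to set up a standard contraction-mapping (Picard iteration) argument in the Banach space $X(0,T)$, treating the coupled wave–heat system \eqref{problem-linearized-simple-a}–\eqref{problem-linearized-simple-e} as a fixed-point problem with the nonlinear terms $\mathcal F$ and $\mathcal H$ viewed as source terms. First I would fix a small $T>0$ and a radius $R$ with $\mathcal N_0\le R^2/4$, and define the closed ball
\[
\mathcal B_{T,R}=\Bigl\{(\varphi,\psi,\Phi)\in X(0,T):\ \sup_{0\le t\le T}\bigl(\|\varphi\|_3^2+\|\psi\|_2^2+\|\Phi\|_4^2\bigr)\le R^2,\ \ \inf_{x\in\mathbb R_+}(\varphi_x+\bar\rho)\ge c_0/2\Bigr\},
\]
where $c_0:=\inf_{x\in\mathbb R_+}(\varphi_{0x}+\bar\rho)>0$ by \eqref{positivity} and $\bar\rho\ge\rho_+>0$ by Theorem \ref{thm-sta-problem}. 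Given $(\tilde\varphi,\tilde\psi,\tilde\Phi)\in\mathcal B_{T,R}$, I would freeze the nonlinearities: form $\tilde{\mathcal F}$ and $\tilde{\mathcal H}$ from $(\tilde\varphi,\tilde\psi,\tilde\Phi)$ via \eqref{F-defi-simple}–\eqref{F-1-2}, then solve the \emph{linear} damped wave equation \eqref{problem-linearized-simple-a} for $\varphi$ with source $\tilde{\mathcal F}+\tilde{\mathcal H}$ and the linear heat equation \eqref{problem-linearized-simple-c} for $\Phi$ with source $a\tilde\varphi_x$, both with homogeneous Dirichlet data at $x=0$ and the prescribed initial data; set $\psi=-\varphi_t$. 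This defines the solution map $\mathcal T(\tilde\varphi,\tilde\psi,\tilde\Phi)=(\varphi,\psi,\Phi)$.

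The core of the argument is then two estimates. For self-mapping, I would run the energy method on the linear problems: multiply \eqref{problem-linearized-simple-a} by $\varphi_t$ and by $-\varphi_{xxt}$ (and a further $x$-derivative for the $H^3$ level), integrate over $\mathbb R_+$, and use the uniform lower bound $p'(\bar\rho)\ge p'(\rho_+)>0$ from \eqref{p-condition} together with the damping term $\alpha\|\varphi_t\|^2$ to close the $\varphi$-estimates; the boundary terms at $x=0$ vanish because of \eqref{problem-linearized-simple-e} and the compatibility of the data. For $\Phi$, parabolic regularity on the half-line gives the $H^4$ bound in terms of $\|\tilde\varphi_x\|$ and $\|\Phi_0\|_4$. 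The source terms are controlled by Sobolev embedding and the no-vacuum bound: schematically $\|\tilde{\mathcal F}\|+\|\tilde{\mathcal H}\|\le C(R)$ uniformly on $[0,T]$, with all constants depending only on $R$, $c_0$, and the fixed quantities $\rho_\pm,\phi_\pm,a,b,\mu,\alpha$ through Theorem \ref{thm-sta-problem}. Integrating the differential inequalities over $[0,T]$ yields $\sup_{[0,T]}(\|\varphi\|_3^2+\|\psi\|_2^2+\|\Phi\|_4^2)\le \mathcal N_0+T\,C(R)$, so choosing $T=T_0(R)$ small forces the right side below $R^2$. The lower bound $\inf(\varphi_x+\bar\rho)\ge c_0/2$ on $[0,T_0]$ is preserved because $\varphi_x(x,t)-\varphi_{0x}(x)=\int_0^t\varphi_{xt}\,ds$ and $\|\varphi_{xt}\|_{L^\infty}\le C(R)$, so shrinking $T_0$ if necessary keeps the displacement below $c_0/2$.

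For contraction, I would take two data $(\tilde\varphi^{(i)},\tilde\psi^{(i)},\tilde\Phi^{(i)})\in\mathcal B_{T_0,R}$, write the equations for the differences, and run the same energy estimates one regularity level lower (i.e.\ control the difference in $H^2\times H^1\times H^3$, which suffices to identify the fixed point and is the standard device to avoid the loss of derivatives); the differences of $\mathcal F,\mathcal H$ are Lipschitz in $(\tilde\varphi,\tilde\psi,\tilde\Phi)$ on $\mathcal B_{T_0,R}$ thanks to the uniform no-vacuum bound, with Lipschitz constant $C(R)$, so the map is a contraction on $[0,T_0]$ after possibly shrinking $T_0$ once more. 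The Banach fixed point theorem then gives a unique fixed point in $\mathcal B_{T_0,R}$, which by construction is the unique solution $(\varphi,\psi,\Phi)\in X(0,T_0)$ with the stated no-vacuum property and the bound $\sup_{[0,T_0]}(\|\varphi\|_3^2+\|\psi\|_2^2+\|\Phi\|_4^2)\le 2\mathcal N_0$ (taking $R^2=4\mathcal N_0$). I expect the main obstacle to be the derivative count in the wave equation: the term $\mathcal H=(\varphi_t^2/(\varphi_x+\bar\rho))_x$ contains $\varphi_{xt}$ and, after the top $x$-derivative of \eqref{problem-linearized-simple-a}, threatens to require $H^3$ control of $\psi=-\varphi_t$ while only $\psi\in H^2$ is available; the resolution is the usual quasilinear-wave trick of estimating $\varphi_t$ at its own ($H^2$) level rather than differentiating $\mathcal H$ an extra time, combined with the hyperbolic energy identity that pairs $\varphi_{xxt}$ with $(p'(\bar\rho)\varphi_{xx})_x$ so that no genuine loss occurs — this, and the careful bookkeeping of the $x=0$ boundary terms, is where the work concentrates, and it is precisely the content cited from \cite{MATSUMARA-1977}.
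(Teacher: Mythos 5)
Your proposal is correct and follows essentially the same approach the paper has in mind: the paper proves Proposition \ref{prop-local} only by citing the standard linearization/fixed-point scheme of \cite{MATSUMARA-1977} together with parabolic theory, which is exactly the Picard iteration in a closed ball of $X(0,T)$ that you describe, including freezing the nonlinearities, energy estimates for the linear damped wave and heat sub-problems, and contraction at one lower regularity level. Your remarks on preserving the no-vacuum bound via $\|\varphi_{xt}\|_{L^\infty}\le C(R)$ and on avoiding derivative loss through the hyperbolic energy pairing are consistent with what the cited framework provides.
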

\vspace*{2mm}

In what follows, we are devoted to proving the following theorem on the global existence and uniqueness of solutions to the problem \eqref{problem-linearized-simple-a}--\eqref{problem-linearized-simple-e}.
\begin{proposition}\label{prop-key}
Let the conditions in Theorem \ref{thm-sta-problem} hold and assume $\varphi _{0}\in H ^{3} $, $ \psi _{0}\in H ^{2} $ and $ \Phi _{0}\in H ^{4}$ with \eqref{positivity}. Then there exists a suitably small constant $ \delta _{1}>0 $ independent of $ t $ such that if
\begin{gather*}\displaystyle  \|\varphi _{0}\|_{3}+\|\psi _{0}\|_{2}+\|\Phi _{0}\|_{4} + \left\vert \phi _{-}-\phi _{+}\right\vert \leq
\delta _{1},
\end{gather*}
the problem \eqref{problem-linearized-simple-a}--\eqref{problem-linearized-simple-e} admits a unique global solution $ (\varphi(x,t), \psi(x,t), \Phi(x,t))\in X(0,\infty)$ such that for any $ t \geq 0 $ there holds that
\begin{gather}\label{regularity-prop}
\displaystyle  \|\varphi\|_{3}+\|\psi\|_{2} +\|\Phi\|_{4}\leq C \delta _{1}
\end{gather}
and
\begin{align}\label{prop-esti}
\displaystyle  \int _{0}^{t} \Big(\|\Phi\|_{3}^{2}+\|(\varphi _{x},\varphi _{\tau},\psi,\Phi _{\tau})\|_{2}^{2}+\|(\varphi _{\tau \tau},\psi _{\tau},\Phi _{\tau \tau})\|_{1}^{2} \Big)\mathrm{d}\tau \leq C \delta _{1}^{2}
\end{align}
where $C$ is a constant independent of $t$.
\end{proposition}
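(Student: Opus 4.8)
The plan is to prove the global existence estimate \eqref{regularity-prop}--\eqref{prop-esti} by the continuation argument: combine the local existence result (Proposition \ref{prop-local}) with an \emph{a priori} estimate that closes under a smallness assumption on the data. Concretely, I would fix $T>0$ and assume the \emph{a priori} bound
\[
\sup_{0\le t\le T}\bigl(\|\varphi\|_{3}^{2}+\|\psi\|_{2}^{2}+\|\Phi\|_{4}^{2}\bigr)\le \varepsilon^{2}
\]
for a small $\varepsilon$ to be chosen. Under this assumption $\varphi_x+\bar\rho$ stays bounded away from zero (so $\rho$ has no vacuum, the nonlinearities $\mathcal F,\mathcal H$ are controlled), and the goal is to upgrade this to the \emph{strict} inequality with $\varepsilon$ replaced by $C(\delta_1+\varepsilon^{3/2})\,$ or similar, which for $\delta_1,\varepsilon$ small yields a bound strictly better than the \emph{a priori} one; a standard open–closed argument then gives $T=\infty$. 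The passage from the linearized system \eqref{problem-linearized-simple-a}--\eqref{problem-linearized-simple-b} back to $(\rho,m,\phi)$, and hence Theorem \ref{thm-stability}, is immediate once \eqref{prop-esti} is in hand, since $\|\varphi_x\|,\|\psi\|,\|\Phi\|\to 0$ along a sequence of times, and then interpolation with the uniform bound plus the equation gives the uniform-in-$x$ decay \eqref{large-time-thm}.

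The heart of the matter is the \emph{a priori} energy estimate, which I would organize hierarchically. \textbf{(i) Basic energy:} multiply \eqref{problem-linearized-simple-a} by $\varphi_t$ (and by $\varphi$, to produce a dissipation term for $\varphi_x$ via the damping $\alpha\varphi_t$), integrate over $\mathbb{R}_+$, and use the boundary conditions \eqref{problem-linearized-simple-e} to kill boundary terms; the structural condition \eqref{p-condition}, i.e.\ $p'(\bar\rho)-\tfrac{a\mu}{b}\bar\rho>0$, is what makes the leading quadratic form $\int p'(\bar\rho)\varphi_x^2$ coercive even after the troublesome linear coupling terms $-\mu\bar\rho\Phi_x$ and $-\mu\varphi_x\bar\phi_x$ from $\mathcal F_2$ are absorbed. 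The term $-\mu\bar\rho\Phi_x$ is genuinely non-dissipative in the $\varphi$-equation, so here I would bring in the $\Phi$-equation \eqref{problem-linearized-simple-c}: multiply it by $\Phi$ and by $-\Phi_{xx}$ to get parabolic dissipation $\|\Phi_x\|^2+\|\Phi_{xx}\|^2$ and the zeroth-order term $b\|\Phi\|^2$, and combine with a suitable multiple of the $\varphi$-energy so that the cross term $a\int\varphi_x\Phi$ (equivalently $\mu\bar\rho\int\varphi\Phi_x$-type terms) is dominated by Cauchy–Schwarz using the $\tfrac{a\mu}{b}$-gap in \eqref{p-condition}. \textbf{(ii) First-order estimates:} differentiate \eqref{problem-linearized-simple-a} in $x$ and repeat, and differentiate \eqref{problem-linearized-simple-c} in $x$; the terms with no spatial derivative of $\varphi$ (again $\bar\rho\Phi_{xx}$) must be paired with the $\Phi$-dissipation as before. \textbf{(iii) Time derivatives / higher order:} differentiate in $t$ to estimate $\varphi_{tt},\psi_t,\Phi_{tt}$ and the remaining spatial derivatives (this is why the proposition asks for $\Phi_0\in H^4$ and why the initial data $\Phi_{t0},\Phi_{tt0}$ in \eqref{fida-t-initial}--\eqref{fida-tt-intial} are prescribed). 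Throughout, the key recurring device to handle the \emph{lack of dissipation in $\varphi$ itself} (only $\varphi_x$ and $\varphi_t$ are dissipated) is the exponentially weighted Hardy-type inequality mentioned in the introduction: since $\bar\rho-\rho_+$ and $\bar\phi-\phi_+$ decay like $e^{-\lambda x}$ by \eqref{con-stat-decay}, terms such as $\int \bar\phi_x \varphi \varphi_x$ or $\int(\text{stationary derivatives})\cdot\varphi\cdot(\cdots)$ can be bounded by $\|\text{weight}\cdot\varphi\|\cdot\|\varphi_x\|$ and the weighted Hardy inequality converts $\|e^{-\lambda x}\varphi\|$ back into a small constant times $\|\varphi_x\|$, which the dissipation absorbs.

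The main obstacle — and the reason the energy estimates are described in the introduction as ``very sophisticated, where the lower-order estimates involve higher-order estimates and vice versa'' — is that the system is only partially dissipative: there is no direct $L^2$-dissipation for $\varphi$, and the hyperbolic block \eqref{problem-linearized-simple-a} produces, via the chemotactic coupling, a linear forcing $-\mu\bar\rho\Phi_x$ that cannot be closed within the hyperbolic energy alone. Disentangling this requires choosing the right linear combination of the $\varphi$-energies and the $\Phi$-energies at \emph{each} order (so that the gap $p'(\bar\rho)-\tfrac{a\mu}{b}\bar\rho>0$ always beats the cross terms), and simultaneously running the \emph{a priori}-assumption bookkeeping so that all cubic-and-higher remainder terms coming from $\mathcal F_1,\mathcal H$ (e.g.\ $\|\varphi_x\|_{L^\infty}\|\varphi_x\|^2$, $\|\varphi_t\|_{L^\infty}\|\varphi_t\|\|\cdot\|$) carry a factor of $\varepsilon$ and are therefore absorbable. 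Once the full energy inequality
\[
\frac{d}{dt}E(t)+cD(t)\le C(\delta_1+\varepsilon)\,D(t)+C\delta_1^{2}e^{-\lambda t}
\]
(with $E\sim\|\varphi\|_3^2+\|\psi\|_2^2+\|\Phi\|_4^2$ and $D$ the dissipation functional in \eqref{prop-esti}) is established, choosing $\delta_1,\varepsilon$ small and integrating in $t$ gives both \eqref{regularity-prop} and \eqref{prop-esti}, completing the continuation argument.
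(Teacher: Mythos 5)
Your proposal captures the same overall architecture as the paper: continuation argument, \emph{a priori} smallness assumption, hierarchical energy estimates obtained by multiplying \eqref{problem-linearized-simple-a} by $\varphi$ and $\varphi_t$ (and their $x$-derivatives), coupling these to the parabolic dissipation of \eqref{problem-linearized-simple-c} to absorb the non-dissipative linear term $\mu\bar\rho\Phi_x$, using the structural gap $p'(\bar\rho)>\tfrac{a\mu}{b}\bar\rho$ to make the resulting mixed quadratic form in $(\varphi_x,\Phi)$ coercive, and invoking the exponentially weighted Hardy inequality to handle terms weighted by $\bar\rho_x,\bar\phi_x$. Two imprecisions are worth flagging, though neither derails the scheme. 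First, to cancel the cross term $\mu\int\bar\rho\Phi\varphi_x$ coming from $\mathcal F_2$, the $\Phi$-equation must be multiplied not by $\Phi$ or $-\Phi_{xx}$ but by $\tfrac{\mu}{a}\bar\rho\Phi$ (resp.\ $\tfrac{\mu}{a}\bar\rho\Phi_t$); the $x$-dependent weight $\tfrac{\mu}{a}\bar\rho$ is what makes the quadratic form $p'(\bar\rho)\varphi_x^2-2\mu\bar\rho\Phi\varphi_x+\tfrac{b\mu}{a}\bar\rho\Phi^2$ have discriminant sign governed precisely by \eqref{p-condition} pointwise in $x$. Second, your summarizing differential inequality contains a spurious forcing $C\delta_1^2 e^{-\lambda t}$: the stationary profile decays exponentially in $x$, not $t$, and its contributions are absorbed through Hardy's inequality into the dissipation, so the correct closed inequality is simply $\tfrac{d}{dt}E+cD\le C(\delta_1+\varepsilon)D$; this is harmless for the conclusion but reflects a misreading of where the exponential weight enters. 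Finally, the paper's highest-order $\varphi$-estimate (Lemma \ref{lem-vfi-xxx}) does not come from a naive $t$-differentiation but from testing \eqref{Eq-x-simple-a} against $-((p'(\bar\rho)\varphi_x)_{xx}+\mathcal F_x)_t$, a less obvious multiplier chosen to produce a total time derivative; and the $\|\Phi_{xxxx}\|$ bound is not part of the \emph{a priori} assumption (which only uses $\|\Phi\|_3$) but is recovered a posteriori from the equation. Your outline does not anticipate these specific moves, but the overall route is the one taken in the paper.
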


Theorem \ref{thm-stability} will be proved by Proposition \ref{prop-local} and Proposition \ref{prop-key}. Next, we are devoted to proving Proposition \ref{prop-key}.
\vspace*{3mm}

 \subsection{Some preliminaries}
The proof of Proposition \ref{prop-key} is based on the combination of the local existence result in Proposition \ref{prop-local} with the \emph{a priori} estimates given in \eqref{regularity-prop}-\eqref{prop-esti}. In the sequel, we assume that $\left( \varphi,\psi,\Phi \right) \in X(0,T)$ is a solution to the problem \eqref{problem-linearized-simple-a}--\eqref{problem-linearized-simple-e} obtained in Proposition \ref{prop-local} for some $T>0$ and derive the \emph{a priori} estimates \eqref{regularity-prop}-\eqref{prop-esti} based on the technique of \emph{a priori} assumption. That is we first assume that the solution $(\varphi,\psi,\Phi)$ of \eqref{problem-linearized-simple-a}--\eqref{problem-linearized-simple-e} satisfies
\begin{align}\label{aproiri-assumpt}
\displaystyle \sup _{0 \leq t<T}\left\{\left\|(\varphi,\Phi)(\cdot, t)\right\|_{3}^2+\left\| \psi(\cdot, t)\right\|_{2}^2\right\}\leq \varepsilon^2,
\end{align}
where $\varepsilon>0 $ is a constant to be determined later, and then derive the \emph{a priori} estimates to obtain the global existence of solutions. Finally we justify that the global solutions obtained satisfy the above \emph{a priori} assumption and thus close our argument.

Using the fact $\varphi _{t}=-\psi$ from \eqref{problem-linearized-simple-b} and the Sobolev inequality, we have
\begin{gather}\label{samlness-sumup}
\displaystyle \sum _{k=0}^{2}\|\partial _{x}^{k}(\varphi,\Phi)(\cdot, t) \|_{L ^{\infty}(\mathbb{R}_{+})}+
\sum _{k=0}^{1}\|\partial _{x}^{k}(\psi,\varphi _{t})(\cdot, t) \| _{L ^{\infty}(\mathbb{R}_{+})}\leq C \varepsilon.
\end{gather}
Denote $ \delta:=\left\vert \phi _{-}-\phi _{+}\right\vert $, by \eqref{con-stat-decay}, one can find a constant $ c _{1}>0 $ depending on $ \rho _{+} $, $ a $ and $ b $ such that
\begin{align}\label{bar-rho-bounds}
\displaystyle c _{1}^{-1} \leq \bar{\rho}(x) \leq c _{1},
\end{align}
provided $ \delta $ is suitably small. Combining \eqref{bar-rho-bounds} with \eqref{samlness-sumup}, we get
\begin{gather}\label{rho-lower-bd}
\displaystyle \frac{1}{c} \leq \rho = \varphi _{x}+\bar{\rho} \leq c,
\end{gather}
for some constant $ c>0 $ depending on $ \rho _{+} $, $ a $ and $ b $, provided $ \varepsilon $ and $ \delta $ are small enough. The boundary condition \eqref{problem-linearized-simple-e} together with the equation \eqref{problem-linearized-simple-a} leads to the following boundary conditions on higher-order derivatives:
\begin{gather}\label{boundary-simple-case}
\displaystyle \left.( \varphi _{t}, \varphi _{tt},\Phi _{t},\Phi _{tt})\right.\vert _{x=0}=0,\ \ \left.\left( \left( p'(\bar{\rho})\varphi _{x} \right)_{x} +\mathcal{F}\right) \right\vert _{x=0}=0,\ \  \left.\left( \left( p'(\bar{\rho})\varphi _{x} \right)_{x} +\mathcal{F}\right)_{t} \right\vert _{x=0}=0.
\end{gather}
Moreover, the following Hardy inequality plays a key role in deriving the \emph{a priori} estimates.
\begin{lemma}
Let $k>0 $ be a constant, it holds that
\begin{gather}\label{Hardy}
\displaystyle \int _{\mathbb{R}_{+}}		{\mathop{\mathrm{e}}} ^{- k x}f ^{2}\mathrm{d}x \leq C _{k} \int _{\mathbb{R}_{+}}f _{x}^{2} \mathrm{d}x
\end{gather}
for any $ f \in H _{0}^{1}(\mathbb{R}_{+}) $, where $ C _{k}>0 $ is a constant depending on $ k $ but independent of $ f $.
\end{lemma}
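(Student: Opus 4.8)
The plan is to prove \eqref{Hardy} first for the dense subclass $C_c^\infty(\mathbb{R}_{+})\subset H_0^1(\mathbb{R}_{+})$ and then pass to the limit, since both sides of \eqref{Hardy} depend continuously on $f$ in the $H^1$-norm. So I would fix $f\in C_c^\infty(\mathbb{R}_{+})$, for which in particular $f(0)=0$.

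The quickest route uses the fundamental theorem of calculus: from $f(0)=0$ one has $f(x)=\int_{0}^{x}f_{x}(y)\,\mathrm{d}y$, and the Cauchy--Schwarz inequality gives $f(x)^{2}\le x\int_{0}^{x}f_{x}(y)^{2}\,\mathrm{d}y\le x\,\|f_{x}\|^{2}$. Multiplying by $e^{-kx}$ and integrating over $\mathbb{R}_{+}$ then yields
\begin{gather*}
\int_{\mathbb{R}_{+}}e^{-kx}f^{2}\,\mathrm{d}x\;\le\;\|f_{x}\|^{2}\int_{\mathbb{R}_{+}}x\,e^{-kx}\,\mathrm{d}x\;=\;\frac{1}{k^{2}}\,\|f_{x}\|^{2},
\end{gather*}
so \eqref{Hardy} holds with $C_{k}=1/k^{2}$. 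Alternatively --- and this is perhaps closer in spirit to the integration-by-parts manipulations used later --- one can write $e^{-kx}=-\frac{1}{k}\partial_{x}(e^{-kx})$ and integrate by parts; the boundary contributions vanish (at $x=0$ because $f(0)=0$, at $x=+\infty$ because $f$ has compact support), which leaves
\begin{gather*}
\int_{\mathbb{R}_{+}}e^{-kx}f^{2}\,\mathrm{d}x=\frac{2}{k}\int_{\mathbb{R}_{+}}e^{-kx}f f_{x}\,\mathrm{d}x\le\frac{2}{k}\Big(\int_{\mathbb{R}_{+}}e^{-kx}f^{2}\,\mathrm{d}x\Big)^{1/2}\Big(\int_{\mathbb{R}_{+}}e^{-kx}f_{x}^{2}\,\mathrm{d}x\Big)^{1/2},
\end{gather*}
and, dividing through and using $e^{-kx}\le 1$, one obtains \eqref{Hardy} with $C_{k}=4/k^{2}$.

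The final step is a routine density argument: given $f\in H_0^1(\mathbb{R}_{+})$, pick $f_{n}\in C_c^\infty(\mathbb{R}_{+})$ with $f_{n}\to f$ in $H^1(\mathbb{R}_{+})$; then $f_{n,x}\to f_{x}$ in $L^2(\mathbb{R}_{+})$ and, along a subsequence, $f_{n}\to f$ almost everywhere with an $L^2$-dominating function, so $\int_{\mathbb{R}_{+}}e^{-kx}f_{n}^{2}\,\mathrm{d}x\to\int_{\mathbb{R}_{+}}e^{-kx}f^{2}\,\mathrm{d}x$, and the inequality for $f$ follows by passing to the limit in the inequality for $f_{n}$. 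I do not expect any genuine obstacle here; the one point that deserves a word of care is the vanishing of the boundary term at $x=+\infty$ in the integration-by-parts route, which is immediate on the dense class and, for general $f\in H^1(\mathbb{R}_{+})$, follows from the embedding $H^1(\mathbb{R}_{+})\hookrightarrow C_{0}(\overline{\mathbb{R}_{+}})$.
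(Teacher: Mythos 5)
Your proof is correct, and it takes a genuinely different route from the paper's. The paper first invokes an unweighted Hardy-type inequality $\int_{\mathbb{R}_{+}}(1+x)^{-2}f^{2}\,\mathrm{d}x\le 4\int_{\mathbb{R}_{+}}f_{x}^{2}\,\mathrm{d}x$ cited from Lemma 3.4 of an external reference, and then simply bounds the exponential weight by the algebraic one via ${\mathop{\mathrm{e}}}^{-kx}\le C_{k}(1+x)^{-2}$; the constant $C_{k}$ is therefore left implicit. Your argument is self-contained and more elementary: the first route (fundamental theorem of calculus plus Cauchy--Schwarz, giving $f(x)^{2}\le x\|f_{x}\|^{2}$) yields the explicit and in fact better constant $C_{k}=1/k^{2}$, while the integration-by-parts route gives $C_{k}=4/k^{2}$. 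What the paper's approach buys is brevity (one line, modulo the citation) and the fact that it simultaneously records the unweighted inequality, which is the more standard statement; what yours buys is independence from the reference, explicit constants, and an argument that transfers directly to other decaying weights. Your density step is also handled correctly and is the same kind of routine reduction the paper tacitly performs; the only small stylistic remark is that in the second route one does not really need the $H^{1}\hookrightarrow C_{0}$ embedding for the boundary term at $+\infty$, since once the inequality is proved on $C_{c}^{\infty}$ the passage to $H_{0}^{1}$ is already complete by density.
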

\begin{proof}
 From Lemma 3.4 in \cite{Li-Wang-spike-layer}, we get
 \begin{gather}
\displaystyle \int _{\mathbb{R}_{+}} (1+ x)^{-2}f ^{2}\mathrm{d}x \leq 4\int _{\mathbb{R}_{+}}f _{x}^{2}\mathrm{d}x\nonumber
\end{gather}
for any $ f \in H _{0}^{1}(\mathbb{R}_{+})  $. This along with the basic fact $ 		{\mathop{\mathrm{e}}}^{-kx}(1+x) ^{2} \leq C _{k} $ with some positive constant $ C _{k}>0 $ for any $ x \in \mathbb{R}_{+} $, implies that
\begin{gather}
\displaystyle  \int _{\mathbb{R}_{+}}	{\mathop{\mathrm{e}}} ^{- k x}f ^{2}\mathrm{d}x \leq C _{k}\int _{\mathbb{R}_{+}}		 (1+ x)^{-2}f ^{2}\mathrm{d}x \leq C _{k}\int _{\mathbb{R}_{+}}f _{x}^{2}\mathrm{d}x.\nonumber
\end{gather}
We thus get \eqref{Hardy}.
\end{proof}
\vspace*{3mm}

\subsection{Energy estimates} 
\label{sub:the_em}
In this section, we will derive the some estimates for the solution $(\varphi,\Phi)$ of \eqref{problem-linearized-simple-a}--\eqref{problem-linearized-simple-e} under the \emph{a priori} assumption \eqref{aproiri-assumpt} by the method of energy estimates. The estimates for $ \psi $ follows from the fact $ \psi=-\varphi _{t} $.

We begin with the lower-order estimates.
\begin{lemma}\label{lem-energy-lever}
Let the assumptions in Proposition \ref{prop-key} hold. If $ \varepsilon $ and $ \delta := \left\vert \phi _{-}- \phi _{+}\right\vert $ are sufficiently small, then the solution $ (\varphi,\Phi) $ of \eqref{problem-linearized-simple-a}--\eqref{problem-linearized-simple-e} satisfies
 \begin{align}\label{con-lem-basic}
    &\displaystyle \|(\varphi,\Phi)\|_{1}^{2}+\|\varphi _{t}\|^{2}
     +\int _{0}^{t}\|(\varphi _{x},\varphi _{\tau}, \Phi, \Phi _{x}, \Phi _{\tau})\|^{2}\mathrm{d}\tau\leq C (\|(\varphi _{0},\Phi _{0})\|_{1}^{2}+\|\psi _{0}\|^{2})
    \end{align}
    for any $ t \in (0,T) $, where $ C>0 $ is a constant independent of $ T $.
\end{lemma}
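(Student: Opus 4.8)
The plan is to derive the basic energy estimate \eqref{con-lem-basic} by combining the natural energy identity for the wave-type equation \eqref{problem-linearized-simple-a} with the parabolic energy estimate for \eqref{problem-linearized-simple-c}, treating the coupling and the nonlinear remainder terms perturbatively. First I would multiply \eqref{problem-linearized-simple-a} by $\varphi_t$ and integrate over $\mathbb{R}_+$; using the boundary condition $\varphi_t|_{x=0}=0$ from \eqref{boundary-simple-case}, integration by parts produces
\begin{gather*}
\frac{\mathrm{d}}{\mathrm{d}t}\int_{\mathbb{R}_+}\Big(\frac12\varphi_t^2+\frac12 p'(\bar{\rho})\varphi_x^2\Big)\mathrm{d}x+\alpha\int_{\mathbb{R}_+}\varphi_t^2\,\mathrm{d}x=\int_{\mathbb{R}_+}(\mathcal{F}+\mathcal{H})\varphi_t\,\mathrm{d}x+(\text{l.o.t. from }p'(\bar{\rho})_x).
\end{gather*}
The damping term gives control of $\int_0^t\|\varphi_\tau\|^2$, but there is no dissipation for $\varphi_x$ yet; to get it, I would add a multiple of \eqref{problem-linearized-simple-a} tested against $\varphi$ itself, which yields $\int_{\mathbb{R}_+}p'(\bar{\rho})\varphi_x^2\,\mathrm{d}x$ at the cost of $\frac{\mathrm{d}}{\mathrm{d}t}\int\varphi\varphi_t$ plus $\int\varphi_t^2$ and a small $\alpha\frac{\mathrm{d}}{\mathrm{d}t}\int\varphi^2$; here the structural assumption \eqref{p-condition} guarantees $p'(\bar{\rho})\ge$ a positive constant by \eqref{bar-rho-bounds}, so this term is genuinely coercive.

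The critical point is the term $\mathcal{F}_2=-\mu[\varphi_x\Phi_x+\varphi_x\bar{\phi}_x+\bar{\rho}\Phi_x]$, whose contribution $\int\mathcal{F}_2\varphi_t\,\mathrm{d}x$ contains the two \emph{linear} pieces $-\mu\int(\varphi_x\bar{\phi}_x+\bar{\rho}\Phi_x)\varphi_t\,\mathrm{d}x$ that cannot be absorbed by smallness alone. For the $\bar{\phi}_x\varphi_x\varphi_t$ piece I would use the exponential decay \eqref{con-stat-decay} of $\bar{\phi}_x$ together with the weighted Hardy inequality \eqref{Hardy}: writing $|\bar{\phi}_x|\le C\delta e^{-\lambda x}$, Cauchy--Schwarz and \eqref{Hardy} bound $\int|\bar{\phi}_x|\,|\varphi_x|\,|\varphi_t|\le C\delta\|\varphi_x\|\,\|e^{-\lambda x/2}\varphi_t\|\le C\delta\|\varphi_x\|\,\|\varphi_{tx}\|$ — wait, since $\varphi_t|_{x=0}=0$ one applies Hardy to $\varphi_t$ directly, getting $\le C\delta\|\varphi_x\|\,\|\varphi_{\tau x}\|$, which must then be controlled at the next order; more efficiently, $\int|\bar{\phi}_x\varphi_x\varphi_t|\le C\delta(\|\varphi_x\|^2+\|\varphi_t\|^2)$ suffices for the zeroth-order estimate since both factors already appear on the left. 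For the genuinely dangerous term $-\mu\int\bar{\rho}\Phi_x\varphi_t\,\mathrm{d}x$, the idea is to pair it against the $\phi$-equation: multiply \eqref{problem-linearized-simple-c} by a suitable multiple of $\Phi$ (and of $a\varphi$), integrate to obtain $\frac{\mathrm{d}}{\mathrm{d}t}\frac12\|\Phi\|^2+\|\Phi_x\|^2+b\|\Phi\|^2=a\int\varphi_x\Phi\,\mathrm{d}x$, and then use $\varphi_t=-\psi$ to see that the cross terms $\int\bar{\rho}\Phi_x\varphi_t$ and $a\int\varphi_x\Phi$ can be combined into a time derivative modulo terms controlled by $\|\Phi_x\|^2,\|\varphi_x\|^2,\|\varphi_t\|^2$; the constant $\frac{a\mu}{b}$ appearing from balancing these is exactly compensated by the surplus in $p'(\bar{\rho})-\frac{a\mu}{b}\bar{\rho}>0$ from \eqref{p-condition}. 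This is the heart of the lemma and the place where all three structural hypotheses (the pressure condition, the exponential decay of the steady state, and the Hardy inequality) are used simultaneously.

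The remaining terms are routine: $\mathcal{F}_1$ is quadratic in $\varphi_x$ by Taylor expansion (so $|\mathcal{F}_1|\le C|\varphi_{xx}||\varphi_x|+C|\bar{\rho}_x||\varphi_x|^2$) and $\mathcal{H}=(\varphi_t^2/(\varphi_x+\bar{\rho}))_x$ is quadratic in $(\varphi_t,\varphi_{tx})$ with a bounded denominator by \eqref{rho-lower-bd}; both yield contributions bounded by $C(\varepsilon+\delta)$ times the dissipation terms plus the as-yet-uncontrolled $\|\varphi_{xx}\|,\|\varphi_{\tau x}\|$, which at this stage I would simply carry along and absorb once the first-order estimates are in hand — this is the usual "lower-order estimates involve higher-order estimates" entanglement flagged in the introduction, and it is resolved by the \emph{a priori} assumption \eqref{aproiri-assumpt} making $\varepsilon$ small. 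Finally I would assemble the combination $\mathcal{E}(t):=\int(\frac12\varphi_t^2+\frac12 p'(\bar{\rho})\varphi_x^2)\,\mathrm{d}x+\kappa_1\int\varphi\varphi_t\,\mathrm{d}x+\kappa_1\frac{\alpha}{2}\|\varphi\|^2+\kappa_2\|\Phi\|^2$ with constants $0<\kappa_2\ll\kappa_1\ll 1$ chosen so that $\mathcal{E}(t)\sim\|(\varphi,\Phi)\|_1^2+\|\varphi_t\|^2$ and $\frac{\mathrm{d}}{\mathrm{d}t}\mathcal{E}(t)+c\|(\varphi_x,\varphi_t,\Phi,\Phi_x,\Phi_t)\|^2\le C(\varepsilon+\delta)(\text{higher-order terms})$; integrating in time and using $\|\Phi_t\|\le\|\Phi_{xx}\|+a\|\varphi_x\|+b\|\Phi\|$ from \eqref{problem-linearized-simple-c} together with standard elliptic estimates for $\|\Phi_{xx}\|$ in terms of $\|\Phi_t\|$ and $\|\varphi_x\|$ gives \eqref{con-lem-basic}. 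The main obstacle, as indicated above, is the simultaneous treatment of the two linear coupling terms arising from $\mathcal{F}_2$ — in particular making the $\bar{\rho}\Phi_x$ term harmless requires exploiting the precise algebraic structure linking \eqref{problem-linearized-simple-a} and \eqref{problem-linearized-simple-c} rather than mere smallness.
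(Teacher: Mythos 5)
Your proposal captures the paper's mechanism and is essentially the same proof: test \eqref{problem-linearized-simple-a} against $\varphi$ and $\varphi_t$, test \eqref{problem-linearized-simple-c} against weighted $\Phi$ and $\Phi_t$, and observe that the two cross terms combine into the quadratic form $p'(\bar{\rho})\varphi_x^2-2\mu\bar{\rho}\Phi\varphi_x+\frac{\mu b}{a}\bar{\rho}\Phi^2$, which is positive definite precisely by \eqref{p-condition}, while the exponentially localized coefficients $\bar{\rho}_x,\bar{\phi}_x$ are handled by the Hardy inequality \eqref{Hardy}. One small inaccuracy worth flagging: you claim the contributions of $\mathcal{F}_1$ and $\mathcal{H}$ bring in the higher-order norms $\|\varphi_{xx}\|$ and $\|\varphi_{\tau x}\|$ which must be "carried along" and absorbed later. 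That does not actually happen here: after integrating by parts (which these divergence-form terms invite), $\int\mathcal{F}_1\varphi\,\mathrm{d}x$, $\int\mathcal{H}\varphi\,\mathrm{d}x$, $\int\mathcal{H}\varphi_t\,\mathrm{d}x$ reduce to $\varepsilon$-small multiples of $\|\varphi_x\|^2$ and $\|\varphi_t\|^2$ only, and $\int\mathcal{F}_1\varphi_t\,\mathrm{d}x$ becomes an exact time derivative bounded via Taylor expansion, so the zeroth-order estimate closes by itself; the lower/higher-order entanglement you anticipate appears only at the next order (Lemmas 4.2--4.4), not in this lemma. A second minor point: you test \eqref{problem-linearized-simple-c} against a constant multiple of $\Phi$, whereas the paper uses the weight $\frac{\mu}{a}\bar{\rho}\Phi$ (and $\frac{\mu}{a}\bar{\rho}\Phi_t$) to make the cross terms cancel exactly; with an unweighted test function the mismatch $\bar{\rho}-\rho_+$ decays exponentially and so can be absorbed by another Hardy step, so both choices work, but the weighted choice is cleaner. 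Neither issue affects the soundness of your plan.
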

 \begin{proof}
Multiplying \eqref{problem-linearized-simple-a} by $ \varphi $ and integrating the resulting equation over $ \mathbb{R}_{+} $, we get
  \begin{align}\label{first-di-ine}
  \displaystyle  & \frac{\mathrm{d}}{\mathrm{d}t} \int _{\mathbb{R}_{+}} \left( \varphi \varphi_{t}+\frac{\alpha}{2} \varphi^{2} \right)  \mathrm{d} x+\int _{\mathbb{R}_{+}} p^{\prime}(\bar{\rho}) \varphi_{x}^{2} \mathrm{d} x
   \nonumber \\
     &\displaystyle ~=  \|\varphi_{t}\|^{2}  +  \int _{\mathbb{R}_{+}} \mathcal{F} _{1}\varphi \mathrm{d} x+  \int _{\mathbb{R}_{+}} \mathcal{F}_{2} \varphi \mathrm{d} x+\int _{\mathbb{R}_{+}}\mathcal{H} \varphi \mathrm{d}x.
          \end{align}
   By the Taylor expansion, we get
  \begin{gather*}
  \displaystyle  p(\varphi _{x}+\bar{\rho})-p(\bar{\rho})-p'(\bar{\rho})\varphi _{x} = p ^{\prime \prime}(\bar{\rho}+\vartheta _{1}\varphi _{x})\varphi _{x}^{2}
  \end{gather*}
  for some $ \vartheta _{1} \in (0,1) $. Then it follows from \eqref{p-condition}, \eqref{F-1-2}, \eqref{samlness-sumup} and \eqref{bar-rho-bounds} that
  \begin{gather}\label{fir-f1}
  \displaystyle \int _{\mathbb{R}_{+}} \mathcal{F} _{1}\varphi \mathrm{d} x=- \int _{\mathbb{R}_{+}}\left[ p(\varphi _{x}+\bar{\rho})-p(\bar{\rho})-p'(\bar{\rho})\varphi _{x} \right]\varphi _{x}  \mathrm{d}x \leq C \varepsilon \|\varphi _{x}\|^{2},
  \end{gather}
  provided $ \varepsilon $ and $ \delta $ are suitably small. Integrating by parts, we have
  \begin{align}\label{fir-f2-hardy-0}
  \displaystyle   \int _{\mathbb{R}_{+}} \mathcal{F}_{2} \varphi \mathrm{d} x&= -\mu\int _{\mathbb{R}_{+}} \varphi _{x}\Phi _{x}\varphi\mathrm{d}x-\mu\int _{\mathbb{R}_{+}}\varphi _{x}\bar{\phi}_{x}\varphi \mathrm{d}x-\mu \int _{\mathbb{R}_{+}}\bar{\rho}\Phi _{x} \varphi \mathrm{d}x
   \nonumber \\
   & \displaystyle =-\mu\int _{\mathbb{R}_{+}} \varphi _{x}\Phi _{x}\varphi\mathrm{d}x-\mu\int _{\mathbb{R}_{+}}\varphi _{x}\bar{\phi}_{x}\varphi \mathrm{d}x+ \mu \int _{\mathbb{R}_{+}}\bar{\rho}_{x}\Phi \varphi \mathrm{d}x+\mu \int _{\mathbb{R}_{+}}\bar{\rho}\Phi \varphi _{x}\mathrm{d}x.
  \end{align}
  In view of \eqref{samlness-sumup} and the Cauchy-Schwarz inequality, we deduce
  \begin{align}\label{fir-f2-hardy-1}
  \displaystyle  -\mu\int _{\mathbb{R}_{+}} \varphi _{x}\Phi _{x}\varphi\mathrm{d}x \leq C \|\varphi\|_{L ^{\infty}}\|\varphi _{x}\| \|\Phi _{x}\| \leq C \varepsilon \|(\varphi _{x},\Phi _{x})\|^{2}.
  \end{align}
  By the fact $ \left\vert (\bar{\rho}_{x},\bar{\phi}_{x})\right\vert \leq C\delta 		{\mathop{\mathrm{e}}}^{-\lambda x} $ from \eqref{con-stat-decay} and the Hardy inequality \eqref{Hardy}, it holds that
  \begin{align}\label{fir-f2-hardy-2}
  &\displaystyle  -\mu\int _{\mathbb{R}_{+}}\varphi _{x}\bar{\phi}_{x}\varphi \mathrm{d}x+ \mu \int _{\mathbb{R}_{+}}\bar{\rho}_{x}\Phi \varphi \mathrm{d}x
   \nonumber \\
   &~\displaystyle \leq C \delta \|\varphi _{x}\|\|    {\mathop{\mathrm{e}}}^{-\lambda x}\varphi\|+C \delta \|    {\mathop{\mathrm{e}}}^{- \frac{\lambda}{2}x}\varphi\|\|    {\mathop{\mathrm{e}}}^{- \frac{\lambda}{2}x}\Phi\|
    \nonumber \\
    &~\displaystyle \leq C \delta\|(\varphi _{x}, \Phi _{x})\|^{2},
  \end{align}
  where the Cauchy-Schwarz inequality has been used. Inserting \eqref{fir-f2-hardy-1} and \eqref{fir-f2-hardy-2} into \eqref{fir-f2-hardy-0} leads to
\begin{align}\label{fir-f2-hardy}
  \displaystyle  \int _{\mathbb{R}_{+}} \mathcal{F}_{2} \varphi \mathrm{d} x
    & \leq C \left( \delta+ \varepsilon \right)\|(\varphi _{x}, \Phi _{x})\|^{2}+\mu \int _{\mathbb{R}_{+}}\bar{\rho}\Phi \varphi _{x} \mathrm{d}x.
  \end{align}
    For the last term on the right-hand side of \eqref{first-di-ine}, from \eqref{F-defi-simple}, \eqref{samlness-sumup}, \eqref{rho-lower-bd}, integration by parts and  Cauchy-Schwarz inequality, we have
  \begin{align}\label{fir-f3}
  \displaystyle \int _{\mathbb{R}_{+}}\mathcal{H}\varphi \mathrm{d}x &=- \int _{\mathbb{R}_{+}}\frac{\varphi _{t} ^{2}}{\varphi _{x}+\bar{\rho}}  \varphi _{x}  \mathrm{d}x \leq C\int _{\mathbb{R}_{+}}\varphi _{t} ^{2}\left\vert \varphi _{x}\right\vert  \mathrm{d}x \leq C \|\varphi _{t}\|_{L ^{\infty}}\|\varphi _{t}\|\|\varphi _{x}\|\leq C \varepsilon \|( \varphi _{t}, \varphi _{x}) \|^{2}.
  \end{align}
  Substituting \eqref{fir-f1}, \eqref{fir-f2-hardy} and \eqref{fir-f3} into \eqref{first-di-ine}, we get
  \begin{align}\label{esti-vif}
  &\displaystyle  \displaystyle  \frac{\mathrm{d}}{\mathrm{d}t} \int _{\mathbb{R}_{+}}\left( \varphi \varphi_{t}+\frac{\alpha}{2} \varphi^{2} \right)  \mathrm{d} x+\int _{\mathbb{R}_{+}} p^{\prime}(\bar{\rho}) \varphi_{x}^{2} \mathrm{d} x
   \nonumber \\
   &~\displaystyle\leq C \left( \delta+ \varepsilon \right)\|(\varphi _{x}, \Phi _{x})\|^{2}+C\|\varphi _{t}\|^{2}+\mu \int _{\mathbb{R}_{+}}\bar{\rho}\Phi \varphi _{x} \mathrm{d}x.
  \end{align}
  Multiplying \eqref{problem-linearized-simple-c} by $ \frac{\mu}{a}\bar{\rho}\Phi $ and integrating the resulting equation over $ \mathbb{R}_{+} $, one has
      \begin{align*}
    \displaystyle & \frac{1}{2}\frac{\mathrm{d}}{\mathrm{d}t}\int _{\mathbb{R}_{+}} \frac{\mu}{a}\bar{\rho}\Phi ^{2} \mathrm{d}x+\frac{\mu}{a}\int _{\mathbb{R}_{+}}\left(  b \bar{\rho} \Phi ^{2}+\bar{\rho}\Phi _{x}^{2} \right)   \mathrm{d}x
     \nonumber \\
     & \displaystyle ~ =-\int _{\mathbb{R}_{+}}\frac{\mu }{a}\bar{\rho} _{x}\Phi \Phi _{x} \mathrm{d}x+\mu\int _{\mathbb{R}_{+}}\bar{\rho}\Phi  \varphi_{x} \mathrm{d}x,
    \end{align*}
    where, due to the fact $ \left\vert \bar{\rho}_{x}\right\vert\leq C\delta     {\mathop{\mathrm{e}}}^{- \lambda x} $ from \eqref{con-stat-decay} and the Hardy inequality \eqref{Hardy}, the following inequality holds
\begin{align}
\displaystyle  -\int _{\mathbb{R}_{+}}\frac{\mu }{a}\bar{\rho} _{x}\Phi \Phi _{x} \mathrm{d}x \leq C \delta \|\Phi _{x}\|\|    {\mathop{\mathrm{e}}}^{-\lambda x}\Phi\| \leq C \delta \|\Phi _{x}\|^{2}.\nonumber
\end{align}
Therefore,
\begin{align}\label{esti-fip}
     \frac{1}{2}\frac{\mathrm{d}}{\mathrm{d}t}\int _{\mathbb{R}_{+}} \frac{\mu}{a}\bar{\rho}\Phi ^{2} \mathrm{d}x+\frac{\mu}{a}\int _{\mathbb{R}_{+}}\left( b \bar{\rho} \Phi ^{2}+\bar{\rho}\Phi _{x}^{2} \right)  \mathrm{d}x
     \leq \mu\int _{\mathbb{R}_{+}}\bar{\rho}\Phi  \varphi _{x} \mathrm{d}x+C\delta\|\Phi _{x}\|^{2}.
    \end{align}
    Combining \eqref{esti-fip} with \eqref{esti-vif}, we obtain
    \begin{align}\label{fip-esti-di-eq}
    &\displaystyle
\displaystyle  \frac{\mathrm{d}}{\mathrm{d}t} \int _{\mathbb{R}_{+}}\left(\varphi \varphi_{t}+\frac{\alpha}{2} \varphi^{2}+\frac{\mu}{2a}\bar{\rho}\Phi ^{2} \right)\mathrm{d} x+\int _{\mathbb{R}_{+}} \left[ \frac{\mu}{2a}\bar{\rho}\Phi _{x}^{2} +\left( p^{\prime}(\bar{\rho}) \varphi_{x}^{2}-2 \mu \bar{\rho}\Phi \varphi _{x} +\frac{\mu b}{a}\bar{\rho}\Phi ^{2}\right)  \right]  \mathrm{d} x
 \nonumber \\
 &~\displaystyle \leq  C \left( \delta+ \varepsilon \right)\|\varphi _{x}\|^{2} +C\|\varphi _{t}\|^{2}
    \end{align}
    for suitably small $ \varepsilon $ and $ \delta $. By \eqref{p-condition} and \eqref{bar-rho-bounds}, we have
    \begin{gather*}
    \displaystyle  p^{\prime}(\bar{\rho}) \varphi_{x}^{2}-2 \mu \bar{\rho}\Phi \varphi _{x} +\frac{\mu b}{a}\bar{\rho}\Phi ^{2} \geq C \left( \varphi _{x}^{2}+\Phi ^{2} \right)
    \end{gather*}
    for some constant $ C>0 $ independent of $ t $. Then, for sufficiently small $ \varepsilon $ and $ \delta $, we have from \eqref{fip-esti-di-eq} that
    \begin{gather}\label{fip-fida}
    \displaystyle  \frac{\mathrm{d}}{\mathrm{d}t} \int _{\mathbb{R}_{+}}\left( \varphi \varphi_{t}+\frac{\alpha}{2} \varphi^{2}+\frac{\mu}{a}\bar{\rho}\Phi ^{2}  \right)  \mathrm{d} x+C\|(\varphi _{x}, \Phi, \Phi _{x})\|^{2} \leq C _{1} \|\varphi _{t}\|^{2}
    \end{gather}
    for some constant $ C _{1}>0 $, where we have used \eqref{bar-rho-bounds}. Now let us turn to the estimate for $ \varphi _{t} $. Multiplying \eqref{problem-linearized-simple-a} by $ \varphi _{t} $ and integrating the resulting equation over $ \mathbb{R}_{+} $, we get
    \begin{align}\label{vfi-t-ine-fir}
    &\displaystyle  \frac{1}{2} \frac{\mathrm{d}}{\mathrm{d}t}\int _{\mathbb{R}_{+}}\left[ \varphi_{t}^{2}+p^{\prime}(\bar{\rho}) \varphi_{x}^{2}\right] \mathrm{d} x+\int _{\mathbb{R}_{+}} \alpha \varphi_{t}^{2} \mathrm{d} x =\int _{\mathbb{R}_{+}}\mathcal{F}_{1} \varphi_{t} \mathrm{d}x +\int _{\mathbb{R}_{+}}\mathcal{F}_{2} \varphi_{t} \mathrm{d}x +\int _{\mathbb{R}_{+}}\mathcal{H} \varphi_{t} \mathrm{d}x.
         \end{align}
         Next, we estimate the terms on the right-hand side of \eqref{vfi-t-ine-fir}. First, it follows from a direct computation that
         \begin{align}
         \displaystyle \int _{\mathbb{R}_{+}}\mathcal{F}_{1}\varphi _{t} \mathrm{d}x&=- \int _{\mathbb{R}_{+}}\left[ p(\varphi _{x}+\bar{\rho})-p(\bar{\rho})-p'(\bar{\rho})\varphi _{x}  \right]  \varphi _{xt} \mathrm{d}x
          \nonumber \\
          &\displaystyle =- \frac{\mathrm{d}}{\mathrm{d}t} \int _{\mathbb{R}_{+}}\left( \int _{\bar{\rho}}^{\bar{\rho}+\varphi _{x}}p(s) \mathrm{d}y- p(\bar{\rho})\varphi _{x}- \frac{1}{2}p'(\bar{\rho})\varphi _{x}^{2}\right)  \mathrm{d}x.\nonumber
         \end{align}
Second, due to \eqref{samlness-sumup}, Cauchy-Schwarz inequality and the fact $ \left\vert (\bar{\rho}_{x},\bar{\phi}_{x})\right\vert \leq \delta 		{\mathop{\mathrm{e}}}^{-\lambda x} $ from \eqref{con-stat-decay}, we have
\begin{align}\label{zero-f-2-vfi-t}
\displaystyle \int _{\mathbb{R}_{+}}\mathcal{F}_{2}\varphi _{t} \mathrm{d}x& =- \mu\int _{\mathbb{R}_{+}} \varphi _{x}\Phi _{x}\varphi _{t}\mathrm{d}x- \mu\int _{\mathbb{R}_{+}}\varphi _{x}\bar{\phi}_{x}\varphi _{t} \mathrm{d}x+ \mu \int _{\mathbb{R}_{+}}\bar{\rho} _{x}\Phi\varphi _{t} \mathrm{d}x+\mu \int _{\mathbb{R}_{+}}\bar{\rho}\Phi \varphi _{xt} \mathrm{d}x,
 \nonumber \\
 & \displaystyle \leq C \|\varphi _{x}\|_{L ^{\infty}}\|\Phi _{x}\|\|\varphi _{t}\|+\|\bar{\phi}_{x}\|_{L ^{\infty}} \|\varphi _{x}\|\|\varphi _{t}\| + C \delta \|		{\mathop{\mathrm{e}}}^{-\lambda x}\Phi\|\|\varphi _{t}\|
  \nonumber \\
  & \displaystyle \quad +\frac{\mathrm{d}}{\mathrm{d}t}\int _{\mathbb{R}_{+}}\mu \bar{\rho}\Phi \varphi _{x} \mathrm{d}x- \mu \int _{\mathbb{R}_{+}}\bar{\rho}\Phi _{t}\varphi _{x} \mathrm{d}x
  \nonumber \\
   & \displaystyle \leq C(\varepsilon+ \delta)\|(\varphi _{x},\varphi _{t},\Phi _{x})\|^{2}+
   \frac{\mathrm{d}}{\mathrm{d}t}\int _{\mathbb{R}_{+}}\mu \bar{\rho}\Phi \varphi _{x} \mathrm{d}x
    - \mu \int _{\mathbb{R}_{+}}\bar{\rho}\Phi _{t}\varphi _{x} \mathrm{d}x.
\end{align}
where we have used the fact $  \|		{\mathop{\mathrm{e}}}^{-\lambda x}\Phi\|^{2} \leq C \|\Phi _{x}\| ^{2} $ by the Hardy inequality \eqref{Hardy}. Finally, using \eqref{samlness-sumup}, \eqref{rho-lower-bd} and integration by parts, one has
\begin{align}\label{zero-h-f-t}
\displaystyle \int _{\mathbb{R}_{+}}\mathcal{H}\varphi _{t} \mathrm{d}x &=- \int _{\mathbb{R}_{+}} \frac{\varphi _{t} ^{2}}{\varphi _{x}+\bar{\rho}}  \varphi _{xt} \mathrm{d}x \leq C \|\varphi _{xt}\|_{L  ^{\infty}}\|\varphi _{t}\|^{2} \leq C \varepsilon \|\varphi _{t} \|^{2}.
\end{align}
Hence, for suitably small $ \delta $ and $ \varepsilon $, we find from \eqref{vfi-t-ine-fir} that
    \begin{align}\label{var-t}
    &\displaystyle  \frac{1}{2} \frac{\mathrm{d}}{\mathrm{d}t}\int _{\mathbb{R}_{+}}\left[ \varphi_{t}^{2}+ p^{\prime}(\bar{\rho}) \varphi_{x}^{2}\right] \mathrm{d} x+\frac{\alpha}{2}\int _{\mathbb{R}_{+}}  \varphi_{t}^{2} \mathrm{d} x
     \nonumber \\
      &~ \leq  \frac{\mathrm{d}}{\mathrm{d}t}\int _{\mathbb{R}_{+}}\mu \bar{\rho}\Phi \varphi _{x} \mathrm{d}x
      - \frac{\mathrm{d}}{\mathrm{d}t} \int _{\mathbb{R}_{+}}\left( \int _{\bar{\rho}}^{\bar{\rho}+\varphi _{x}}p(s) \mathrm{d}y- p(\bar{\rho})\varphi _{x}- \frac{1}{2}p'(\bar{\rho})\varphi _{x}^{2}\right) \mathrm{d}x
  \nonumber \\
   &~ \quad \displaystyle - \mu \int _{\mathbb{R}_{+}}\bar{\rho}\Phi _{t}\varphi _{x} \mathrm{d}x +
C(\delta+\varepsilon) \|(\varphi _{x},\Phi _{x}
)\|^{2},
    \end{align}
    where the terms $ \|\varphi _{t}\|^{2} $ on the right-hand side of \eqref{zero-f-2-vfi-t} and \eqref{zero-h-f-t} have been absorbed. To proceed, we multiply \eqref{problem-linearized-simple-c} by $ \frac{\mu}{a}\bar{\rho}\Phi _{t} $ and integrate the resulting equation over $ \mathbb{R}_{+} $ to get
        \begin{align}\label{zero-fida-dif-ine}
    \displaystyle & \frac{\mu}{2a}\frac{\mathrm{d}}{\mathrm{d}t}\int _{\mathbb{R}_{+}} \left( b \bar{\rho} \Phi ^{2}+\bar{\rho}\Phi _{x}^{2}\right) \mathrm{d}x+\frac{\mu}{a}\int _{\mathbb{R}_{+}} \bar{\rho} \Phi _{t} ^{2}  \mathrm{d}x
     \nonumber \\
     & \displaystyle ~ =\int _{\mathbb{R}_{+}}\frac{\mu }{a}\bar{\rho} _{x}\Phi _{t} \Phi _{x} \mathrm{d}x+\mu\int _{\mathbb{R}_{+}}\bar{\rho}\Phi _{t} \varphi _{x} \mathrm{d}x
      \nonumber \\
      & \displaystyle~\leq \mu\int _{\mathbb{R}_{+}}\bar{\rho}\Phi _{t} \varphi _{x} \mathrm{d}x+\delta\|(\Phi _{t},\Phi _{x})\|^{2},
    \end{align}
    where \eqref{con-stat-decay} and the Cauchy-Schwarz inequality have been used. Combining \eqref{zero-fida-dif-ine} with \eqref{var-t} gives
    \begin{align}\label{vfi-t-fida-t}
    &\displaystyle  \frac{1}{2}\frac{\mathrm{d}}{\mathrm{d}t} \int _{\mathbb{R}_{+}}\left( \frac{b\mu}{a} \bar{\rho} \Phi ^{2}- 2 \mu \bar{\rho}\Phi \varphi _{x}+ p^{\prime}(\bar{\rho}) \varphi_{x}^{2}+\frac{\mu}{a}\bar{\rho}\Phi _{x}^{2}+ \varphi_{t}^{2}\right) \mathrm{d}x+  \frac{\alpha}{2}\|\varphi _{t}\|^{2}+\frac{\mu}{a}\int _{\mathbb{R}_{+}} \bar{\rho} \Phi _{t} ^{2}  \mathrm{d}x
     \nonumber \\
     &~ \displaystyle \leq  - \frac{\mathrm{d}}{\mathrm{d}t} \int _{\mathbb{R}_{+}}\left( \int _{\bar{\rho}}^{\bar{\rho}+\varphi _{x}}p(s) \mathrm{d}y- p(\bar{\rho})\varphi _{x}- \frac{1}{2}p'(\bar{\rho})\varphi _{x}^{2}\right) \mathrm{d}x  +
C(\delta+\varepsilon) \|(\varphi _{x}, \Phi _{x},\Phi _{t})\|^{2},
    \end{align}
 where
\begin{gather*}
\displaystyle   \frac{b\mu}{a} \bar{\rho} \Phi ^{2}- 2 \mu \bar{\rho}\Phi \varphi _{x}+ p^{\prime}(\bar{\rho}) \varphi_{x}^{2} \sim \Phi ^{2}+\varphi _{x}^{2},
\end{gather*}
due to \eqref{p-condition} and \eqref{bar-rho-bounds}. Given any constant $ K _{0}>0 $, adding \eqref{fip-fida} with \eqref{vfi-t-fida-t} multiplied by $ K _{0} $ leads to
 \begin{align}\label{zero-sumup}
    &\displaystyle  \frac{1}{2}\frac{\mathrm{d}}{\mathrm{d}t} \int _{\mathbb{R}_{+}}\left[\alpha\varphi ^{2}+ 2\varphi \varphi_{t}+ K _{0}\varphi_{t}^{2}+ K _{0}\Big( \frac{b\mu}{a} \bar{\rho} \Phi ^{2}- 2 \mu \bar{\rho}\Phi \varphi _{x}+ p^{\prime}(\bar{\rho}) \varphi_{x}^{2}+\frac{\mu}{a}\bar{\rho}\Phi _{x}^{2} \Big) \right] \mathrm{d}x
     \nonumber \\
     & ~\quad\displaystyle +C\|(\varphi _{x}, \Phi, \Phi _{x})\|^{2}+\left( \frac{ \alpha K _{0}}{2} -C _{1}\right)\|\varphi _{t}\|^{2}+ \frac{\mu K _{0}}{a}\int _{\mathbb{R}_{+}} \bar{\rho} \Phi _{t} ^{2}  \mathrm{d}x
     \nonumber \\
     &~ \displaystyle \leq - \frac{\mathrm{d}}{\mathrm{d}t} \int _{\mathbb{R}_{+}}K _{0}\left( \int _{\bar{\rho}}^{\bar{\rho}+\varphi _{x}}p(s) \mathrm{d}y- p(\bar{\rho})\varphi _{x}- \frac{1}{2}p'(\bar{\rho})\varphi _{x}^{2}\right) \mathrm{d}x+C K _{0}(\delta+\varepsilon) \|(\varphi _{x}, \Phi _{x},\Phi _{t})\|^{2},
    \end{align}
    where $ C _{1} $ is as in \eqref{fip-fida}. From \eqref{bar-rho-bounds}, it holds that
    \begin{gather*}
    \displaystyle  \frac{\mu K _{0}}{a}\int _{\mathbb{R}_{+}} \bar{\rho} \Phi _{t} ^{2}  \mathrm{d}x \leq C K _{0}\|\Phi _{t}\|^{2}
    \end{gather*}
    for some constant $ C>0 $ which depends on $ \rho _{+} $, $ \mu $ and $ a $. Taking $ K _{0} $ large enough such that  $ \frac{\alpha K _{0}}{2}>C _{1}$ and
\begin{gather*}
\displaystyle \frac{\alpha}{2}\varphi ^{2}+ \varphi \varphi_{t}+\frac{K _{0}}{2} \varphi _{t}^{2} \geq C \left( \varphi ^{2}+ \varphi _{t}^{2} \right)
\end{gather*}
for some constant $ C>0 $, then for suitably small $ \delta $ and $ \varepsilon $, we have from \eqref{zero-sumup} that
 \begin{align}\label{lem-suma}
    &\displaystyle  \frac{1}{2}\frac{\mathrm{d}}{\mathrm{d}t} \int _{\mathbb{R}_{+}}\left[\alpha\varphi ^{2}+ 2\varphi \varphi_{t}+ K _{0}\varphi_{t}^{2}+ K _{0}\Big( \frac{b\mu}{a} \bar{\rho} \Phi ^{2}- 2 \mu \bar{\rho}\Phi \varphi _{x}+ p^{\prime}(\bar{\rho}) \varphi_{x}^{2}+\frac{\mu}{a}\bar{\rho}\Phi _{x}^{2} \Big) \right] \mathrm{d}x
          \nonumber \\
     &~\displaystyle \quad+\frac{\mathrm{d}}{\mathrm{d}t} \int _{\mathbb{R}_{+}}\left( \int _{\bar{\rho}}^{\bar{\rho}+\varphi _{x}}p(s) \mathrm{d}y- p(\bar{\rho})\varphi _{x}- \frac{1}{2}p'(\bar{\rho})\varphi _{x}^{2}\right) \mathrm{d}x+C \left( \|\Phi\|_{1}^{2}+\|(\varphi _{x},\varphi _{t}, \Phi _{t})\|^{2}  \right)
      \leq 0 ,
    \end{align}
 where
    \begin{align*}
    \displaystyle \alpha\varphi ^{2}+ 2\varphi \varphi_{t}+ K _{0}\left(\varphi_{t}^{2}+ \frac{b\mu}{a} \bar{\rho} \Phi ^{2}- 2 \mu \bar{\rho}\Phi \varphi _{x}+ p^{\prime}(\bar{\rho}) \varphi_{x}^{2}+\frac{\mu}{a}\bar{\rho}\Phi _{x}^{2} \right)~\sim \varphi ^{2}+\varphi _{x} ^{2}+ \varphi _{t}^{2}+ \Phi ^{2}+ \Phi _{x}^{2}.
    \end{align*}
Applying the Taylor expansion to the function $h(s):= \int _{\bar{\rho}(x)}^{s}p(s)\mathrm{d}s $ along with \eqref{samlness-sumup} leads to
\begin{align}\label{Taylor-Exp}
  \displaystyle  \left\vert \int _{\bar{\rho}}^{\bar{\rho}+\varphi _{x}}p(s) \mathrm{d}y- p(\bar{\rho})\varphi _{x}- \frac{1}{2}p'(\bar{\rho})\varphi _{x}^{2}\right\vert=\frac{1}{6}\left\vert p''(\bar{\rho}+\vartheta _{2}\varphi _{x})\varphi _{x}^{3}\right\vert \leq C \varepsilon \varphi _{x}^{2}
  \end{align}
  for some constant $ \vartheta _{2} \in (0,1)  $. With \eqref{Taylor-Exp}, integrating \eqref{lem-suma} with respect to $ t $, by  taking $ \delta $ and $ \varepsilon $ suitably small, we get \eqref{con-lem-basic} and hence complete the proof.
 \end{proof}

 \begin{lemma}\label{lem-fip-xx}
Let the assumptions in Proposition \ref{prop-key} hold. If $ \varepsilon $ and $ \delta $ are sufficiently small, then for any $ t \in(0,T) $, the solution $ (\varphi,\Phi) $ of \eqref{problem-linearized-simple-a}--\eqref{problem-linearized-simple-e} satisfies
\begin{align}\label{con-fip-xx}
& \displaystyle \|(\varphi _{x},\Phi _{x})\|_{1}^{2}+\|\varphi _{xt}\|^{2}
 + \int _{0}^{t}\|(\varphi _{xx},\Phi _{x},\Phi _{xx},\varphi_{x \tau}, \Phi _{x \tau})\|^{2} \mathrm{d}\tau
  \nonumber \\
 & \displaystyle ~  \leq C (\|(\varphi _{0x},\Phi _{0x})\|_{1}^{2}+\|\psi _{0x}\|^{2})+  C\int _{0}^{t}\left( \|\varphi _{x}\| ^{2}+\|\varphi _{x}\|\|\Phi _{xx \tau} \| \right) \mathrm{d}\tau.
 \end{align}
 \end{lemma}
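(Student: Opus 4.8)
The plan is to run the energy scheme of Lemma \ref{lem-energy-lever} one differential order higher. For the wave equation \eqref{problem-linearized-simple-a} I would use two multipliers. First, multiplying \eqref{problem-linearized-simple-a} by $-\varphi_{xx}$ and integrating by parts in $x$ (every boundary trace at $x=0$ dropping out because $\varphi_t|_{x=0}=\varphi_{tt}|_{x=0}=0$ by \eqref{boundary-simple-case}) produces an identity of the same shape as \eqref{first-di-ine}, namely $\frac{\mathrm{d}}{\mathrm{d}t}\int_{\mathbb{R}_+}\big(\varphi_{xt}\varphi_x+\tfrac{\alpha}{2}\varphi_x^2\big)\,\mathrm{d}x+\int_{\mathbb{R}_+}p'(\bar\rho)\varphi_{xx}^2\,\mathrm{d}x=\|\varphi_{xt}\|^2+(\text{terms with }\bar\rho_x)+(\text{nonlinear and coupling terms})$. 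Second, differentiating \eqref{problem-linearized-simple-a} in $x$ and multiplying by $\varphi_{xt}$ yields, after one integration by parts in $x$ on the principal part, the genuine dissipation $\alpha\|\varphi_{xt}\|^2$ together with $\tfrac12\frac{\mathrm{d}}{\mathrm{d}t}\int_{\mathbb{R}_+}\big(\varphi_{xt}^2+p'(\bar\rho)\varphi_{xx}^2\big)\,\mathrm{d}x$, mirroring \eqref{vfi-t-ine-fir}. For the parabolic equation \eqref{problem-linearized-simple-c} I would multiply by $-\Phi_{xx}$ to get $\frac{\mathrm{d}}{\mathrm{d}t}\tfrac12\|\Phi_x\|^2+\|\Phi_{xx}\|^2+b\|\Phi_x\|^2=-a\int_{\mathbb{R}_+}\varphi_x\Phi_{xx}\,\mathrm{d}x$ (no boundary term since $\Phi|_{x=0}=\Phi_t|_{x=0}=0$), and differentiate \eqref{problem-linearized-simple-c} in $t$ and multiply by $\Phi_t$ to get $\frac{\mathrm{d}}{\mathrm{d}t}\tfrac12\|\Phi_t\|^2+\|\Phi_{xt}\|^2+b\|\Phi_t\|^2=a\int_{\mathbb{R}_+}\varphi_{xt}\Phi_t\,\mathrm{d}x$ (again the $x=0$ trace vanishes since $\Phi_t|_{x=0}=0$); the state bound on $\|\Phi_{xx}\|$ is then simply read off the equation via $\Phi_{xx}=\Phi_t+b\Phi-a\varphi_x$. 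Adding these identities with a large weight $K_0$ on the $\varphi_{xt}$-identity, as in the passage from \eqref{fip-fida}--\eqref{vfi-t-fida-t} to \eqref{zero-sumup}, makes the accumulated energy equivalent to $\|(\varphi_x,\Phi_x)\|_1^2+\|\varphi_{xt}\|^2$ (after correcting by the positive-definite Taylor remainder used in Lemma \ref{lem-energy-lever}, which is $O(\varepsilon\varphi_x^2)$ by \eqref{samlness-sumup}) and produces the dissipation $\|(\varphi_{xx},\Phi_x,\Phi_{xx},\varphi_{x\tau},\Phi_{x\tau})\|^2$.

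\textbf{Estimating the right-hand sides.} The nonlinear contributions from $\mathcal{F}_1$ and $\mathcal{H}$ and their $x$-derivatives are handled exactly as in Lemma \ref{lem-energy-lever}: Taylor-expanding $p$, using $\tfrac1c\le\varphi_x+\bar\rho\le c$ from \eqref{rho-lower-bd} and the $L^\infty$-smallness \eqref{samlness-sumup}, each such term is bounded by $C\varepsilon$ times the dissipation norm. Among the linear coupling terms coming from $\mathcal{F}_2=-\mu[\varphi_x\Phi_x+\varphi_x\bar\phi_x+\bar\rho\Phi_x]$, the two dangerous ones are $\mu\int_{\mathbb{R}_+}\bar\rho\Phi_x\varphi_{xx}\,\mathrm{d}x$ (from the $-\varphi_{xx}$ multiplier) and $-\mu\int_{\mathbb{R}_+}\bar\rho\Phi_{xx}\varphi_{xt}\,\mathrm{d}x$ (from the $\varphi_{xt}$ multiplier). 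The first is controlled by completing the square against the coercive form $p'(\bar\rho)\varphi_x^2-2\mu\bar\rho\Phi\varphi_x+\tfrac{b\mu}{a}\bar\rho\Phi^2\sim\varphi_x^2+\Phi^2$ supplied by \eqref{p-condition}, just as the analogous term $\mu\int_{\mathbb{R}_+}\bar\rho\Phi\varphi_x\,\mathrm{d}x$ was treated there, possibly after one integration by parts in $x$ to shift a derivative onto $\bar\rho$. The second, after integration by parts in $t$, splits as $-\mu\frac{\mathrm{d}}{\mathrm{d}t}\int_{\mathbb{R}_+}\bar\rho\Phi_{xx}\varphi_x\,\mathrm{d}x+\mu\int_{\mathbb{R}_+}\bar\rho\Phi_{xxt}\varphi_x\,\mathrm{d}x$; the first piece joins the energy, while the last cannot be absorbed at this order and is carried to the right-hand side of \eqref{con-fip-xx} as $C\int_0^t\|\varphi_x\|\|\Phi_{xx\tau}\|\,\mathrm{d}\tau$. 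Every term carrying $\bar\rho_x$, $\bar\rho_{xx}$, $\bar\phi_x$ or $\bar\phi_{xx}$ is absorbed using the exponential bound \eqref{con-stat-decay}, the trivial estimate $\mathrm{e}^{-\lambda x}\le 1$, and, whenever a factor of $\varphi$ or $\Phi$ (which vanish at $x=0$) is present, the Hardy inequality \eqref{Hardy}; the surviving term $-a\int_{\mathbb{R}_+}\varphi_x\Phi_{xx}\,\mathrm{d}x$ yields the $C\int_0^t\|\varphi_x\|^2\,\mathrm{d}\tau$ on the right of \eqref{con-fip-xx}. Finally, integrating the combined differential inequality in time and taking $\delta$ and $\varepsilon$ small enough to absorb all $C(\delta+\varepsilon)$-multiples of the dissipation norm gives \eqref{con-fip-xx}.

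\textbf{Main obstacle.} The delicate part is the bookkeeping of boundary traces at $x=0$ generated by integrating the $x$-differentiated wave equation by parts against $\varphi_{xt}$ (and, where the $-\varphi_{xx}$ multiplier meets $\mathcal{F}_2$): one encounters quantities such as $(p'(\bar\rho)\varphi_x)_x(0)\,\varphi_{xt}(0)$ and $\bar\rho(0)\Phi_x(0)\varphi_x(0)$. Using the compatibility identities $\big((p'(\bar\rho)\varphi_x)_x+\mathcal{F}\big)\big|_{x=0}=0$ and $\big((p'(\bar\rho)\varphi_x)_x+\mathcal{F}\big)_t\big|_{x=0}=0$ from \eqref{boundary-simple-case}, the first type is rewritten in terms of traces of $\mathcal{F}$ alone; the genuinely bilinear survivors (essentially $\bar\rho(0)\Phi_x(0)\varphi_x(0)$ and its $t$-derivative) are then controlled, after integration in time, by the one-dimensional trace inequality $|g(0,\tau)|^2\le C\|g(\cdot,\tau)\|\,\|g_x(\cdot,\tau)\|$ applied to $g=\varphi_x$ and $g=\Phi_x$, which trades them for a small multiple of $\|(\varphi_{xx},\Phi_{xx})\|^2$ plus $\|(\varphi_x,\Phi_x)\|^2$, both already available (the former absorbable after choosing $\delta$ small, the latter folded into the right-hand side of \eqref{con-fip-xx}); the remaining traces are cubic in the small quantities and harmless. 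Keeping straight, at each step, which cross term is killed by coercivity, which by $\varepsilon$- or $\delta$-smallness, which by Hardy's inequality, and which single term $\mu\int_{\mathbb{R}_+}\bar\rho\Phi_{xxt}\varphi_x\,\mathrm{d}x$ must be deferred to the next level of estimates, is where the main difficulty of the argument lies.
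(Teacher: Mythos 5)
Your blueprint is broadly the right one and parallels the paper's --- two multipliers per equation, a large weight to combine the resulting differential inequalities, trace and Hardy inequalities for the boundary and the weighted terms, and the observation that one residual must be carried to the right-hand side as $\int_0^t\|\varphi_x\|\|\Phi_{xx\tau}\|\,\mathrm{d}\tau$. However, your route through the parabolic equation is genuinely different and slightly weaker. The paper differentiates \eqref{problem-linearized-simple-c} in $x$ and tests against the \emph{weighted} multipliers $\tfrac{\mu}{a}\bar\rho\Phi_x$ and $\tfrac{\mu}{a}\bar\rho\Phi_{xt}$. These are engineered so that the coupling term $\mu\int\bar\rho\varphi_{xx}\Phi_x\,\mathrm{d}x$ (resp.\ $\mu\int\bar\rho\varphi_{xx}\Phi_{xt}\,\mathrm{d}x$) produced on the parabolic side cancels \emph{exactly} against the matching piece of $\int\mathcal{F}_2\varphi_{xx}$ (resp.\ $\int\mathcal{F}_2\varphi_{xxt}$) from the hyperbolic side, and the survivor is the quadratic form $p'(\bar\rho)\varphi_{xx}^2-2\mu\bar\rho\Phi_x\varphi_{xx}+\tfrac{b\mu}{a}\bar\rho\Phi_x^2$, which is coercive precisely because of \eqref{p-condition}. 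With your multipliers $-\Phi_{xx}$ and $\Phi_t$ these cross terms do not cancel and the quadratic form you quote (the lower-order $p'(\bar\rho)\varphi_x^2-2\mu\bar\rho\Phi\varphi_x+\tfrac{b\mu}{a}\bar\rho\Phi^2$) is not the one that arises; you would have to absorb $\mu\int\bar\rho\Phi_x\varphi_{xx}$ and $-\mu\int\bar\rho\Phi_{xt}\varphi_{xx}$ by Cauchy--Schwarz against a sufficiently large weight on the parabolic inequality rather than by ``completing the square.'' That is workable but it is not what you wrote, and it no longer uses \eqref{p-condition} in the way the paper's argument does. Moreover, reading $\|\Phi_{xx}\|$ off the equation as $\Phi_{xx}=\Phi_t+b\Phi-a\varphi_x$ costs you $\|\Phi\|$, so the state bound on $\|\Phi_{xx}\|$ you obtain depends on $\|\Phi_0\|$ rather than only on $\|\Phi_{0x}\|_1$ and $\|\psi_{0x}\|$ as \eqref{con-fip-xx} asserts; the paper puts $\|\Phi_{xx}\|^2$ directly into the energy $\mathcal{G}_1$ through the $\bar\rho\Phi_{xt}$ step, which is how it stays at the correct derivative level.

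The more substantive omission is the parabolic boundary compatibility $\Phi_{xx}|_{x=0}=-a\varphi_x$ and $\Phi_{xxt}|_{x=0}=-a\varphi_{xt}$ (the paper's \eqref{fida-bdcon-xx}), which you never invoke. In the paper the deferred term $\int_0^t\|\varphi_x\|\|\Phi_{xx\tau}\|\,\mathrm{d}\tau$ does \emph{not} come from an interior cross term: it comes from the boundary trace $\tfrac{\mu}{a}\bar\rho\Phi_{xx}\Phi_{xt}|_{x=0}$ generated by the $\bar\rho\Phi_{xt}$ multiplier, and it is only after replacing $\Phi_{xx}|_{x=0}$ by $-a\varphi_x|_{x=0}$ that the trace becomes $\mu\bar\rho\varphi_x\Phi_{xt}|_{x=0}\le C\|\varphi_x\|^{1/2}\|\varphi_{xx}\|^{1/2}\|\Phi_{xt}\|^{1/2}\|\Phi_{xxt}\|^{1/2}$, which yields $\eta\|(\Phi_{xt},\varphi_{xx})\|^2+C_\eta\|\varphi_x\|\|\Phi_{xxt}\|$. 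Your alternative origin (integrating $-\mu\int\bar\rho\Phi_{xx}\varphi_{xt}$ by parts in $t$) coincidentally produces the same right-hand side form, but to reach that term at all you have first implicitly integrated $\mu\int\bar\rho\Phi_x\varphi_{xxt}$ by parts in $x$, and this generates the boundary trace $\mu\bar\rho(0)\Phi_x(0)\varphi_{xt}(0)$. Controlling that trace requires either the compatibility identity above or a further time-integration-by-parts on the boundary term followed by a trace bound on $\Phi_{xt}$ --- not on ``$g=\varphi_x$ and $g=\Phi_x$'' as you state. So your boundary bookkeeping as written does not close, and the missing ingredient is exactly the relation $\Phi_{xx}|_{x=0}=-a\varphi_x$ (and its $t$-derivative).
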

 \begin{proof}
    Differentiating \eqref{problem-linearized-simple-a}--\eqref{problem-linearized-simple-c} with respect to $ x $, we get
 \begin{subnumcases}
 \displaystyle \displaystyle \varphi _{xtt}+\alpha \varphi _{xt}-(p ^{\prime}(\bar{\rho})\varphi _{x})_{xx} =\mathcal{F} _{x}+ \mathcal{H}_{x},\label{Eq-x-simple-a}\\[1mm]
  \displaystyle   \displaystyle
  \Phi _{xt}=\Phi _{xxx}+a \varphi _{xx}-b \Phi _{x}.\label{Eq-x-simple-b}
 \end{subnumcases}
  Multiplying \eqref{Eq-x-simple-a} by $ \varphi _{x} $, and integrating it over $ \mathbb{R}_{+} $, we get, thanks to \eqref{boundary-simple-case}, that
 \begin{align}\label{vfi-x-fir-simple}
 &\displaystyle  \frac{\mathrm{d}}{\mathrm{d}t} \int _{\mathbb{R}_{+}}\left( \frac{\alpha}{2} \varphi _{x}^{2}+ \varphi _{x} \varphi _{xt} \right)  \mathrm{d}x +\int _{\mathbb{R}_{+}}p'(\bar{\rho}) \varphi _{xx}^{2} \mathrm{d}x - \int _{\mathbb{R}_{+}}\varphi _{xt}^{2} \mathrm{d}x
  \nonumber \\
  & ~\displaystyle =- \int _{\mathbb{R}_{+}}p''(\bar{\rho})\bar{\rho}_{x} \varphi _{x} \varphi _{xx} \mathrm{d}x -\int _{\mathbb{R}_{+}}\mathcal{F} _{1} \varphi _{xx} \mathrm{d}x-\int _{\mathbb{R}_{+}}\mathcal{F}_{2} \varphi _{xx} \mathrm{d}x
  +\int _{\mathbb{R}_{+}} \mathcal{H} _{x} \varphi _{x} \mathrm{d}x.
 \end{align}
Recalling the definitions of $ \mathcal{F}_{1} $ and $ \mathcal{F}_{2} $ in \eqref{F-1-2}, using \eqref{p-condition}, \eqref{bar-rho-bounds}, \eqref{samlness-sumup}, the fact $ \left\vert (\bar{\rho}_{x},\bar{\phi}_{x})\right\vert \leq C\delta 		{\mathop{\mathrm{e}}}^{-\lambda x} $ from \eqref{con-stat-decay} and Cauchy-Schwarz inequality, we have
  \begin{align}
 &\displaystyle - \int _{\mathbb{R}_{+}}p''(\bar{\rho})\bar{\rho}_{x} \varphi _{x} \varphi _{xx} \mathrm{d}x-\int _{\mathbb{R}_{+}}\mathcal{F} _{2} \varphi _{xx} \mathrm{d}x
  \nonumber \\
  & ~\displaystyle=  - \int _{\mathbb{R}_{+}}\left( p''(\bar{\rho})\bar{\rho}_{x}-\mu \bar{\phi}_{x} \right) \varphi _{x}\varphi  _{xx} \mathrm{d}x + \mu\int _{\mathbb{R}_{+}}\varphi _{x}\Phi _{x}\varphi _{xx} \mathrm{d}x+\mu\int _{\mathbb{R}_{+}}\bar{\rho}\Phi _{x}\varphi _{xx} \mathrm{d}x
   \nonumber \\
   &~\displaystyle \leq C \delta \|\varphi _{x}\|\|\varphi _{xx}\|+C \|\varphi _{x}\|\|\Phi _{x}\|_{L ^{\infty}}\|\varphi _{xx}\|+\mu \int _{\mathbb{R}_{+}}\bar{\rho}\Phi _{x}\varphi _{xx} \mathrm{d}x
    \nonumber \\
       &~ \displaystyle \leq C (\delta+\varepsilon) \|(\varphi _{x},\varphi _{xx})\|^{2}+\mu\int _{\mathbb{R}_{+}}\bar{\rho}\Phi _{x}\varphi _{xx} \mathrm{d}x
 \end{align}
 and
  \begin{align*}
 \displaystyle \int _{\mathbb{R}_{+}}\mathcal{F}_{1}\varphi _{xx} \mathrm{d}x
  &\displaystyle = -\int _{\mathbb{R}_{+}}
\int _{0}^{1}\int _{0}^{s}p'''(\bar{\rho}+\tau \varphi _{x})\mathrm{d}\tau \mathrm{d}s \varphi _{x}^{2}\bar{\rho}_{x}\varphi _{xx} \mathrm{d}x
 \nonumber \\
 & \displaystyle \quad- \int _{\mathbb{R}_{+}}\int _{0}^{1}p''(\bar{\rho}+s \varphi _{x}) \mathrm{d}s \varphi _{x}\varphi _{xx}^{2}  \mathrm{d}x
    \nonumber \\
    &\displaystyle \leq C \|\varphi _{x}\|_{L ^{\infty}}\|\varphi _{x}\| \|\varphi _{xx}\|+C \|\varphi _{x}\|_{L ^{\infty}}\|\varphi _{xx}\|^{2}
     \nonumber \\
         & \displaystyle \leq C \varepsilon \|(\varphi _{x},\varphi _{xx})\|^{2},
 \end{align*}
 where we have used the following identity
\begin{align}\label{cal-f-1-compu}
 \displaystyle \mathcal{F}_{1} &= [p'(\varphi _{x}+\bar{\rho})-p'(\bar{\rho})-p''(\bar{\rho})\varphi _{x}]\bar{\rho}_{x} +[p'(\varphi _{x}+\bar{\rho})-p'(\bar{\rho})]\varphi _{xx}
  \nonumber \\
  &\displaystyle = \int _{0}^{1}\int _{0}^{s}p'''(\bar{\rho}+\tau \varphi _{x})\mathrm{d}\tau \mathrm{d}s \varphi _{x}^{2}\bar{\rho}_{x}+\int _{0}^{1}p''(\bar{\rho}+s \varphi _{x}) \mathrm{d}s \varphi _{x}\varphi _{xx}
 \end{align}
 due to the Taylor expansion. For the last term on the right hand of \eqref{vfi-x-fir-simple}, thanks to \eqref{con-stat-decay}, \eqref{F-defi-simple}, \eqref{samlness-sumup}, \eqref{rho-lower-bd}, Cauchy-Schwarz inequality and the Hardy inequality \eqref{Hardy}, it holds for suitably small $ \varepsilon $ and $ \delta $ that
 \begin{align}\label{hx-esti}
 \displaystyle  \int _{\mathbb{R}_{+}}\mathcal{H}_{x}\varphi _{x} \mathrm{d}x&=-\int _{\mathbb{R}_{+}}\left( \frac{\varphi _{t} ^{2}}{\varphi _{x}+\bar{\rho}} \right)_{x}\varphi _{xx}  \mathrm{d}x
  \nonumber \\
  & \displaystyle =\int _{\mathbb{R}_{+}}\left( \frac{2 \varphi _{t}\varphi _{tx}}{\varphi _{x}+\bar{\rho}}-\frac{\varphi _{t}^{2}(\varphi _{xx}+\bar{\rho}_{x})}{(\varphi _{x}+\bar{\rho})^{2}} \right) \varphi _{xx} \mathrm{d}x
   \nonumber \\
   & \displaystyle \leq C \int _{\mathbb{R}_{+}}\left\vert \varphi _{t}\right\vert \left\vert \varphi _{tx}\right\vert \left\vert \varphi _{xx}\right\vert \mathrm{d}x+C \int _{\mathbb{R}_{+}}\varphi _{t}^{2}(\left\vert \varphi _{xx}\right\vert+\left\vert \bar{\rho}_{x}\right\vert)\left\vert \varphi _{xx}\right\vert \mathrm{d}x
    \nonumber \\
    &\displaystyle \leq C\|\varphi _t\|_{L ^{\infty}}\|\varphi _{tx}\| \|\varphi _{xx}\|+C\|\varphi _{t}\|_{L ^{\infty}}^{2}\|\varphi _{xx}\|^{2}+C \delta \|\varphi _{t}\|_{L ^{\infty
    }}\|    {\mathop{\mathrm{e}}}^{-\lambda x}\varphi _{t}\| \|\varphi _{xx}\|
     \nonumber \\
     & \leq\displaystyle C \varepsilon \|\varphi _{tx}\| \|\varphi _{xx}\|+C \varepsilon ^{2}\|\varphi _{xx}\|^{2}+C \varepsilon \delta \|\varphi _{tx}\|\|\varphi _{xx}\|
      \nonumber \\
           & \displaystyle \leq C(\varepsilon+\delta)\|(\varphi _{tx},\varphi _{xx})\|^{2}.
 \end{align}
 We thus conclude from \eqref{vfi-x-fir-simple}--\eqref{hx-esti} that
 \begin{align}\label{fi-x-di-eq-simple}
 &\displaystyle  \frac{\mathrm{d}}{\mathrm{d}t} \int _{\mathbb{R}_{+}}\left( \frac{\alpha}{2} \varphi _{x}^{2}+ \varphi _{x} \varphi _{xt} \right)  \mathrm{d}x +\int _{\mathbb{R}_{+}}p'(\bar{\rho}) \varphi _{xx}^{2} \mathrm{d}x
  \nonumber \\
  & ~ \displaystyle \leq C (\delta+\varepsilon) \|(\varphi _{x},\varphi _{xt},\varphi _{xx})\|^{2}+\mu\int _{\mathbb{R}_{+}}\bar{\rho}\Phi _{x}\varphi _{xx} \mathrm{d}x+ \left\|\varphi _{xt}\right\|^{2}.
 \end{align}
Multiplying \eqref{Eq-x-simple-b} by $ \frac{\mu}{a}\bar{\rho}\Phi _{x} $, and integrating it to get
 \begin{align}\label{fip-x}
 &\displaystyle \frac{\mu}{2a}\frac{\mathrm{d}}{\mathrm{d}t}\int _{\mathbb{R}_{+}}\bar{\rho}\Phi _{x}^{2} \mathrm{d}x +\frac{ \mu}{a}\int _{\mathbb{R}_{+}}\bar{\rho}\Phi _{xx}^{2} \mathrm{d}x+\frac{b \mu}{a}\int _{\mathbb{R}_{+}}\bar{\rho}\Phi _{x}^{2} \mathrm{d}x
  \nonumber \\
  & ~\displaystyle =  - \left.\frac{ \mu}{a}\bar{\rho}(0)\Phi _{xx}\Phi _{x}\right \vert _{x=0}-\int _{\mathbb{R}_{+}}\bar{\rho}_{x}\Phi _{xx}\Phi _{x} \mathrm{d}x +\mu\int _{\mathbb{R}_{+}}\bar{\rho}\varphi _{xx}\Phi _{x}  \mathrm{d}x
   \nonumber \\
   &~ \displaystyle \leq C \delta \|(\Phi _{x},\Phi _{xx})\|^{2}- \left.\frac{ \mu}{a}\rho _{-}\Phi _{xx}\Phi _{x}\right \vert _{x=0}+\mu\int _{\mathbb{R}_{+}}\bar{\rho}\Phi _{x} \varphi _{xx} \mathrm{d}x,
 \end{align}
 where \eqref{con-stat-decay}, \eqref{bar-rho-bounds} and Cauchy-Schwarz inequality have been used. Since $ \Phi =0 $ at $ x=0 $ and hence $ \Phi _{t}=0 $ at $ x=0 $, recalling \eqref{problem-linearized-simple-c}, we have
\begin{align}\label{fida-bdcon-xx}
\displaystyle  \Phi _{xx}=-a \varphi _{x}\  \mbox{at}\ x=0.
\end{align}
This along with the Sobolev inequality $ \|f\|_{L ^{\infty}}\leq C \|f\|^{\frac{1}{2}}\|f _{x}\| ^{\frac{1}{2}}$ and Young's inequality implies that
  \begin{align}\label{bd-fi-x-simp}
 \displaystyle - \left.\frac{ \mu}{a}\bar{\rho}(0)\Phi _{xx}\Phi _{x}\right \vert _{x=0} &= \left.\mu \bar{\rho}(0)\varphi _{x} \Phi_{x}\right \vert _{x=0} \leq C \|\varphi _{x}\|_{L ^{\infty}} \|\Phi _{x}\|_{L ^{\infty}}
   \nonumber \\
  &\leq C\|\varphi _{x}\|^{\frac{1}{2}}\|\varphi _{xx}\| ^{\frac{1}{2}}\|\Phi _{x}\|^{\frac{1}{2}}\|\Phi _{xx}\| ^{\frac{1}{2}}
   \nonumber \\
   & \leq C  \|\varphi _{x}\|^{\frac{1}{2}}\|\varphi _{xx}\| ^{\frac{1}{2}}(\|\Phi _{x}\|+\|\Phi _{xx}\|)
    \nonumber \\
         &\leq  \eta \|(\Phi _{x},\Phi _{xx},\varphi _{xx})\|^{2}+C _{\eta} \|\varphi _{x}\|^{2}
   \end{align}
   for any $ \eta>0 $. Substituting \eqref{bd-fi-x-simp} into \eqref{fip-x}, we get
\begin{align}\label{FIDAX-X-INEQ}
 &\displaystyle \frac{\mu}{2a}\frac{\mathrm{d}}{\mathrm{d}t}\int _{\mathbb{R}_{+}}\bar{\rho}\Phi _{x}^{2} \mathrm{d}x +\frac{ \mu}{a}\int _{\mathbb{R}_{+}}\bar{\rho}\Phi _{xx}^{2} \mathrm{d}x+\frac{b \mu}{a}\int _{\mathbb{R}_{+}}\bar{\rho}\Phi _{x}^{2} \mathrm{d}x
  \nonumber \\
  & ~ \displaystyle \leq C (\eta+ \delta) \|(\Phi _{x},\Phi _{xx})\|^{2}+ \eta \|\varphi _{xx}\|^{2}+C _{ \eta}\|\varphi _{x}\|^{2}+\mu\int _{\mathbb{R}_{+}}\bar{\rho}\varphi _{xx}\Phi _{x}  \mathrm{d}x.
\end{align}
Combining \eqref{FIDAX-X-INEQ} with \eqref{fi-x-di-eq-simple}, we get after taking $ \delta $, $ \varepsilon $ and $ \eta $ suitably small that
\begin{align}\label{vfi-x-fin-ineq}
\displaystyle  &\displaystyle  \frac{\mathrm{d}}{\mathrm{d}t} \int _{\mathbb{R}_{+}}\left( \frac{\alpha}{2} \varphi _{x}^{2}+ \varphi _{x} \varphi _{xt}+\frac{\mu}{2a}\bar{\rho}\Phi _{x}^{2} \right)  \mathrm{d}x +C\|(\varphi _{xx},\Phi _{x},\Phi _{xx})\|^{2}
 \nonumber \\
 & \displaystyle
\leq \left[ 1+C (\delta+\varepsilon) \right]  \|\varphi _{xt}\|^{2}+C \|\varphi _{x}\|^{2},
\end{align}
where we have used \eqref{bar-rho-bounds} and the following inequality
\begin{gather}\label{vfi-xx-equiv}
\displaystyle   p'(\bar{\rho}) \varphi _{xx}^{2}-2 \mu \bar{\rho}\varphi _{xx}\Phi _{x} +\frac{b \mu}{a}\bar{\rho}\Phi _{x}^{2} \geq C \left( \varphi _{xx}^{2}+ \Phi _{x}^{2}\right)
\end{gather}
due to \eqref{p-condition}. Next, we integrate \eqref{Eq-x-simple-a} multiplied by $ \varphi _{xt} $ over $ \mathbb{R}_{+} $ to get
 \begin{align}\label{vfi-xx-simple}
 &\displaystyle  \frac{1}{2}\frac{\mathrm{d}}{\mathrm{d}t}\int _{\mathbb{R}_{+}}(\varphi _{xt}^{2} +p'(\bar{\rho})\varphi _{xx}^{2})\mathrm{d}x+\alpha \|\varphi_{xt}\|^{2}
  \nonumber \\
  &~ \displaystyle=- \int _{\mathbb{R}_{+}}p''(\bar{\rho})\bar{\rho}_{x} \varphi _{x} \varphi _{xxt} \mathrm{d}x -\int _{\mathbb{R}_{+}}\mathcal{F} _{1}\varphi _{xxt} \mathrm{d}x-\int _{\mathbb{R}_{+}}\mathcal{F}_{2}\varphi _{xxt} \mathrm{d}x+\int _{\mathbb{R}_{+}}\mathcal{H} _{x} \varphi _{xt} \mathrm{d}x.
 \end{align}
A direct computation along with \eqref{con-stat-decay} and Cauchy-Schwarz inequality gives
\begin{align}\label{vfi-xt-firs}
\displaystyle  - \int _{\mathbb{R}_{+}}p''(\bar{\rho})\bar{\rho}_{x} \varphi _{x} \varphi _{xxt} \mathrm{d}x&=- \frac{\mathrm{d}}{\mathrm{d}t}\int _{\mathbb{R}_{+}}p''(\bar{\rho})\bar{\rho}_{x} \varphi _{x} \varphi _{xx} \mathrm{d}x+\int _{\mathbb{R}_{+}}p''(\bar{\rho})\bar{\rho}_{x} \varphi _{xt} \varphi _{xx} \mathrm{d}x
 \nonumber \\
 & \displaystyle \leq - \frac{\mathrm{d}}{\mathrm{d}t}\int _{\mathbb{R}_{+}}p''(\bar{\rho})\bar{\rho}_{x} \varphi _{x} \varphi _{xx} \mathrm{d}x+ C \delta \|(\varphi _{xt}, \varphi _{xx})\|^{2}.
\end{align}
Recalling \eqref{cal-f-1-compu}, we arrive at
 \begin{align}\label{f1-vxxt-comp}
 \displaystyle -\int _{\mathbb{R}_{+}}\mathcal{F} _{1}\varphi _{xxt} \mathrm{d}x
   &\displaystyle =-\frac{1}{2}\frac{\mathrm{d}}{\mathrm{d}t}\int _{\mathbb{R}_{+}}[p'(\varphi _{x}+\bar{\rho})-p'(\bar{\rho})]\varphi _{xx}^{2} \mathrm{d}x +\frac{1}{2}\int _{\mathbb{R}_{+}}p^{\prime \prime}(\varphi _{x}+\bar{\rho})\varphi _{xt} \varphi_{x x}^{2} \mathrm{d}x
    \nonumber \\
    &~\displaystyle \quad-  \frac{\mathrm{d}}{\mathrm{d}t}\int _{\mathbb{R}_{+}}[p'(\varphi _{x}+\bar{\rho})-p'(\bar{\rho})-p''(\bar{\rho})\varphi _{x}]\bar{\rho}_{x} \varphi _{xx}\mathrm{d}x
     \nonumber \\
      &~\displaystyle \quad + \int _{\mathbb{R}_{+}} \left[ p''(\varphi _{x}+\bar{\rho})-p''(\bar{\rho})  \right] \varphi _{xt}\varphi _{xx}\bar{\rho}_{x}  \mathrm{d}x.
         \end{align}
      From \eqref{p-condition}, \eqref{con-stat-decay}, \eqref{samlness-sumup} and \eqref{rho-lower-bd}, it holds that
\begin{align*}
&\displaystyle  \frac{1}{2}\int _{\mathbb{R}_{+}}p^{\prime \prime}(\varphi _{x}+\bar{\rho})\varphi _{xt} \varphi_{x x}^{2} \mathrm{d}x + \int _{\mathbb{R}_{+}} \left[  p''(\varphi _{x}+\bar{\rho})-p''(\bar{\rho}) \right] \varphi _{xt}\varphi _{xx}\bar{\rho}_{x}  \mathrm{d}x
 \nonumber \\
 &~\displaystyle \leq C \|\varphi _{xx}\|_{L ^{\infty}}\|\varphi _{tx}\|\|\varphi _{xx}\|+C \|\bar{\rho}_{x}\|_{L ^{\infty}}\left\|\varphi _{xt}\right\|\|\varphi _{xx}\|
  \nonumber \\
  &~\displaystyle \leq C(\delta+\varepsilon)\|(\varphi _{xx},\varphi _{xt})\|^{2}.
\end{align*}
Therefore, we have
\begin{align}\label{vfi-xt-f1}
 \displaystyle -\int _{\mathbb{R}_{+}}\mathcal{F} _{1}\varphi _{xxt} \mathrm{d}x
   &\displaystyle \leq C(\delta+\varepsilon)\|(\varphi _{xx},\varphi _{xt})\|^{2}  -\frac{1}{2}\frac{\mathrm{d}}{\mathrm{d}t}\int _{\mathbb{R}_{+}}[p'(\varphi _{x}+\bar{\rho})-xp'(\bar{\rho})]\varphi _{xx}^{2} \mathrm{d}x
        \nonumber \\
        &~ \quad - \frac{\mathrm{d}}{\mathrm{d}t}\int _{\mathbb{R}_{+}}[p'(\varphi _{x}+\bar{\rho})-p'(\bar{\rho})-p''(\bar{\rho})\varphi _{x}]\bar{\rho}_{x} \varphi _{xx}\mathrm{d}x.
         \end{align}
Similar to \eqref{f1-vxxt-comp}--\eqref{vfi-xt-f1}, the third term on the right-hand side of \eqref{vfi-xx-simple} can be estimated as follows:
              \begin{align}\label{vfi-xt-f2}
 \displaystyle  -\int _{\mathbb{R}_{+}}\mathcal{F}_{2}\varphi _{xxt} \mathrm{d}x&= \frac{\mathrm{d}}{\mathrm{d}t} \int _{\mathbb{R}_{+}}\mu (\varphi _{x}\Phi _{x}+\varphi _{x}\bar{\phi}_{x}+\bar{\rho}\Phi _{x})\varphi _{xx} \mathrm{d}x-\mu \int _{\mathbb{R}_{+}}\bar{\rho}\Phi _{xt} \varphi _{xx} \mathrm{d}x
      \nonumber \\
  & \displaystyle \quad - \mu\int _{\mathbb{R}_{+}} \left( \varphi _{xt}\Phi _{x}+\varphi _{x}\Phi _{xt}+\varphi _{xt}\bar{\phi}_{x} \right)\varphi _{xx} \mathrm{d}x
   \nonumber \\
   & \displaystyle \leq \frac{\mathrm{d}}{\mathrm{d}t} \int _{\mathbb{R}_{+}}\mu (\varphi _{x}\Phi _{x}+\varphi _{x}\bar{\phi}_{x}+\bar{\rho}\Phi _{x})\varphi _{xx} \mathrm{d}x -\mu \int _{\mathbb{R}_{+}}\bar{\rho}\Phi _{xt} \varphi _{xx} \mathrm{d}x
    \nonumber \\
    & \displaystyle \quad +C \|\Phi _{x}\|_{L ^{\infty}}\|\varphi _{xt}\| \|\varphi _{xx}\|+C \|\varphi _{x}\|_{L ^{\infty}}\|\Phi _{xt}\|\|\varphi _{xx}\|+C \|\bar{\phi}_{x}\|_{L ^{\infty}}\|\varphi _{xt}\| \|\varphi _{xx}\|
     \nonumber \\
        & \displaystyle \leq \frac{\mathrm{d}}{\mathrm{d}t} \int _{\mathbb{R}_{+}}\mu (\varphi _{x}\Phi _{x}+\varphi _{x}\bar{\phi}_{x}+\bar{\rho}\Phi _{x})\varphi _{xx} \mathrm{d}x -\mu \int _{\mathbb{R}_{+}}\bar{\rho}\Phi _{xt} \varphi _{xx} \mathrm{d}x
    \nonumber \\
    & \displaystyle \quad +C(\varepsilon+\delta)\|(\varphi _{xx},\varphi _{xt},\Phi _{xt})\|^{2}.
 \end{align}
 Noticing
 \begin{align}
 \displaystyle \mathcal{H}=\left( \frac{\varphi _{t}^{2}}{\varphi _{x}+\bar{\rho}} \right)_{x}= \frac{2 \varphi _{t} \varphi _{tx}}{\varphi _{x}+\bar{\rho}}- \frac{\varphi _{t} ^{2}(\varphi _{xx}+\bar{\rho}_{x})}{(\varphi _{x}+\bar{\rho})^2},\nonumber
 \end{align}
 we get, thanks to integration by parts and the boundary condition $ \varphi _{t}=0 $ at $ x=0 $, that
 \begin{align}\label{hx-vfi-xt}
 \displaystyle  \int _{\mathbb{R}_{+}}\mathcal{H}_{x}\varphi _{xt} \mathrm{d}x&=- \int _{\mathbb{R}_{+}}\varphi _{xt}\left(  \frac{\varphi _{t} ^{2}(\varphi _{xx}+\bar{\rho}_{x})}{(\varphi _{x}+\bar{\rho})^2}\right) _{x} \mathrm{d}x +2\int _{\mathbb{R}_{+}}\varphi _{xt}\left( \frac{\varphi _{t} \varphi _{tx}}{\varphi _{x}+\bar{\rho}} \right)_{x}  \mathrm{d}x
  \nonumber \\
  & \displaystyle = \int _{\mathbb{R}_{+}}\left[ \left( \frac{\varphi _{xx}^{2}}{2} \right)_{t}+ \bar{\rho}_{x}\varphi _{xxt} \right] \frac{\varphi _{t}^{2}}{(\varphi _{x}+\bar{\rho})^{2}}  \mathrm{d}x +\int _{\mathbb{R}_{+}}\varphi _{xt}^{2}\left( \frac{\varphi _{t}}{\varphi _{x}+\bar{\rho}} \right)_{x}  \mathrm{d}x
  \nonumber \\
  &\displaystyle =\frac{\mathrm{d}}{\mathrm{d}t}\int _{\mathbb{R}_{+}}\frac{\varphi _{t}^{2}}{(\varphi _{x}+\bar{\rho})^{2}} \left( \frac{1}{2} \varphi _{xx}^{2}+\bar{\rho}_{x}\varphi _{xx}\right) \mathrm{d}x+\int _{\mathbb{R}_{+}}\varphi _{xt}^{2}\left( \frac{\varphi _{t}}{\varphi _{x}+\bar{\rho}} \right)_{x}  \mathrm{d}x
   \nonumber \\
   & \displaystyle \quad -\int _{\mathbb{R}_{+}}\left( \frac{1}{2} \varphi _{xx}^{2}+\bar{\rho}_{x}\varphi _{xx}\right)\left( \frac{\varphi _{t}^{2}}{(\varphi _{x}+\bar{\rho})^{2}} \right) _{t} \mathrm{d}x
    \nonumber \\
       &=\frac{\mathrm{d}}{\mathrm{d}t}\int _{\mathbb{R}_{+}}\frac{\varphi _{t}^{2}}{(\varphi _{x}+\bar{\rho})^{2}} \left( \frac{1}{2} \varphi _{xx}^{2}+\bar{\rho}_{x}\varphi _{xx}\right) \mathrm{d}x+\mathcal{D}  _{1}+ \mathcal{D} _{2}.
 \end{align}
 Next, we estimate $ \mathcal{D} _{1} $ and $ \mathcal{D} _{2} $. First, we utilize \eqref{con-stat-decay}, \eqref{samlness-sumup} and \eqref{rho-lower-bd} to get
 \begin{align}\label{FIR-d-1}
 \displaystyle  \mathcal{D} _{1}&= \int _{\mathbb{R}_{+}}\varphi _{xt}^{2}\left( \frac{\varphi _{xt}}{\varphi _{x}+\bar{\rho}} - \frac{\varphi _{t}(\varphi _{xx}+\bar{\rho}_{x})}{\left( \varphi _{x}+\bar{\rho} \right)^{2} }\right)  \mathrm{d}x
  \nonumber \\
  & \displaystyle \leq C \|\varphi _{xt}\|_{L ^{\infty}}\|\varphi _{xt}\|^{2}+C \|\varphi _{t}\|_{L ^{\infty}}( \|\varphi _{xx}\|_{L ^{\infty}}+\|\bar{\rho}_{x}\|_{L ^{\infty}})\|\varphi _{xt}\|^{2}
   \nonumber \\
   &\displaystyle \leq C \left( \varepsilon +\delta \right) \|\varphi _{xt}\|^{2},
 \end{align}
 provided $ \varepsilon $ and $ \delta $ are suitably small. For $ \mathcal{D}_{2} $, by using  \eqref{con-stat-decay}, \eqref{samlness-sumup}, \eqref{rho-lower-bd}, Cauchy-Schwarz inequality and the Hardy inequality \eqref{Hardy}, we obtain
 \begin{align}\label{FIR-d-2}
 \displaystyle \mathcal{D}_{2}& =-\int _{\mathbb{R}_{+}}\left( \frac{1}{2} \varphi _{xx}^{2}+\bar{\rho}_{x}\varphi _{xx}\right)\left( \frac{2\varphi _{t}\varphi _{tt}}{(\varphi _{x}+\bar{\rho})^{2}}-2  \frac{\varphi _{t}^{2}\varphi _{xt}}{(\varphi _{x}+\bar{\rho})^{3}} \right)\mathrm{d}x
  \nonumber \\
  & \displaystyle \leq  C \|\varphi _{xx}\|^{2}\left( \|\varphi _{t}\|_{L ^{\infty}}\|\varphi _{tt}\|_{L ^{\infty}}+\|\varphi _{t}\|_{L ^{\infty}}^{2}\|\varphi _{xt}\|_{L ^{\infty}} \right)
   \nonumber \\
   &\displaystyle \quad+C \delta\|\varphi _{xx}\|\|    {\mathop{\mathrm{e}}}^{-\lambda x}\varphi _{t}\|_{L ^{2}}\|\varphi _{tt}\|_{L ^{\infty}}+C \|\bar{\rho}\|_{L ^{\infty}}\|\varphi _{t}\|_{L ^{\infty}}^{2}\|\varphi _{xt}\|
   \nonumber \\
   & \displaystyle \leq C \left( \delta+ \varepsilon \right)\|(\varphi _{xt},\varphi _{xx})\|^{2},
 \end{align}
where we have used the following inequality
     \begin{gather}\label{varphi-tt}
     \displaystyle \|\varphi _{tt}\| _{L ^{\infty}}\leq C \|\left( p'(\bar{\rho})\varphi _{x} \right)_{x}\|_{L ^{\infty}}+ C \|\varphi _{t}\|_{L ^{\infty}}+\|\mathcal{F}_{1}\|_{L ^{\infty}}+ \|\mathcal{F}_{2}\|_{L ^{\infty}}+C \|\mathcal{H}\|_{L ^{\infty}} \leq C \varepsilon,
     \end{gather}
     due to \eqref{con-stat-decay}, \eqref{problem-linearized-simple-a}, \eqref{F-defi-simple}, \eqref{F-1-2}, \eqref{samlness-sumup} and \eqref{rho-lower-bd}. With \eqref{FIR-d-1} and \eqref{FIR-d-2}, we update \eqref{hx-vfi-xt} as
 \begin{align}\label{vfi-xt-h}
 \displaystyle \int _{\mathbb{R}_{+}}\mathcal{H}_{x}\varphi _{xt} \mathrm{d}x
      & \displaystyle \leq \frac{\mathrm{d}}{\mathrm{d}t}\int _{\mathbb{R}_{+}}\frac{\varphi _{t}^{2}}{(\varphi _{x}+\bar{\rho})^{2}} \left( \frac{1}{2} \varphi _{xx}^{2}+\bar{\rho}_{x}\varphi _{xx}\right) \mathrm{d}x+C \left( \delta+ \varepsilon \right)\|(\varphi _{xt},\varphi _{xx})\|^{2}.
     \end{align}
     Substituting \eqref{vfi-xt-firs}, \eqref{vfi-xt-f1}, \eqref{vfi-xt-f2} and \eqref{vfi-xt-h} into \eqref{vfi-xx-simple}, we get
     \begin{align}\label{vfi-xx-final}
      &\displaystyle  \frac{1}{2}\frac{\mathrm{d}}{\mathrm{d}t}\int _{\mathbb{R}_{+}}(\varphi _{xt}^{2} +p'(\bar{\rho})\varphi _{xx}^{2}-2 \mu\bar{\rho}\Phi _{x} \varphi _{xx})\mathrm{d}x+\alpha \|\varphi_{xt}\|^{2}
       \nonumber \\
       & ~\displaystyle \leq -\frac{1}{2}\frac{\mathrm{d}}{\mathrm{d}t}\int _{\mathbb{R}_{+}}[p'(\varphi _{x}+\bar{\rho})-p'(\bar{\rho})]\varphi _{xx}^{2} \mathrm{d}x
       - \frac{\mathrm{d}}{\mathrm{d}t}\int _{\mathbb{R}_{+}}[p'(\varphi _{x}+\bar{\rho})-p'(\bar{\rho})-p''(\bar{\rho})\varphi _{x}]\bar{\rho}_{x} \varphi _{xx}\mathrm{d}x
        \nonumber \\
        &~\displaystyle \quad +\frac{\mathrm{d}}{\mathrm{d}t} \int _{\mathbb{R}_{+}}\mu (\varphi _{x}\Phi _{x}+\varphi _{x}\bar{\phi}_{x})\varphi _{xx} \mathrm{d}x +
        \frac{\mathrm{d}}{\mathrm{d}t}\int _{\mathbb{R}_{+}}\frac{\varphi _{t}^{2}}{(\varphi _{x}+\bar{\rho})^{2}} \left( \frac{1}{2} \varphi _{xx}^{2}+\bar{\rho}_{x}\varphi _{xx}\right) \mathrm{d}x
         \nonumber \\
         &~ \displaystyle \quad+ C(\delta+ \varepsilon) \|( \varphi _{xt},\varphi _{xx},\Phi _{xt})\|^{2}-\mu \int _{\mathbb{R}_{+}}\bar{\rho}\Phi _{xt} \varphi _{xx} \mathrm{d}x .
      \end{align}
      Multiplying \eqref{Eq-x-simple-b} by $ \frac{\mu}{a}\bar{\rho}\Phi _{xt} $, and integrating it to get
   \begin{align}\label{fida-xx}
  & \displaystyle  \frac{\mu b}{2a}\frac{\mathrm{d}}{\mathrm{d}t}\int _{\mathbb{R}_{+}}\bar{\rho}\Phi  _{x}^{2} \mathrm{d}x+ \frac{  \mu}{2a}\frac{\mathrm{d}}{\mathrm{d}t}\int _{\mathbb{R}_{+}}\bar{\rho}\Phi _{xx}^{2} \mathrm{d}x+ \frac{\mu}{a}\int _{\mathbb{R}_{+}}\bar{\rho}\Phi _{xt}^{2} \mathrm{d}x
    \nonumber \\
    &~\displaystyle =- \left.\frac{ \mu}{a} \bar{\rho}\Phi _{xx}\Phi _{xt}\right \vert _{x=0}+\mu\int _{\mathbb{R}_{+}}\bar{\rho}\varphi _{xx}\Phi _{xt} \mathrm{d}x - \int _{\mathbb{R}_{+}}\Phi _{xx}\Phi _{xt}\bar{\rho}_{x} \mathrm{d}x,
   \end{align}
 where, in view of \eqref{con-stat-decay}, \eqref{fida-bdcon-xx}, the Sobolev inequality $ \|f\|_{L ^{\infty}}\leq C \|f\|^{\frac{1}{2}}\|f _{x}\| ^{\frac{1}{2}}$ and Cauchy-Schwarz inequality, the following inequalities hold:
 \begin{align*}
 \displaystyle  - \int _{\mathbb{R}_{+}}\Phi _{xx}\Phi _{xt}\bar{\rho}_{x} \mathrm{d}x &\leq C \|\bar{\rho}_{x}\|_{L ^{\infty}}\|\Phi _{xx}\|\|\Phi _{xt}\| \leq C \delta \|(\Phi _{xt},\Phi _{xx})\|^{2},
   \nonumber \\[1mm]
   \displaystyle - \left.\frac{ \mu}{a}\bar{\rho}\Phi _{xx}\Phi _{xt}\right \vert _{x=0} &\leq C \|\varphi _{x}\|_{L ^{\infty}} \|\Phi _{xt}\|_{L ^{\infty}}
   \nonumber \\
  &\leq C\|\varphi _{x}\|^{\frac{1}{2}}\|\varphi _{xx}\| ^{\frac{1}{2}}\|\Phi _{xt}\|^{\frac{1}{2}}\|\Phi _{xxt}\| ^{\frac{1}{2}}
   \nonumber \\
   & \leq C  \|\varphi _{x}\|^{\frac{1}{2}}\|\Phi _{xxt}\| ^{\frac{1}{2}}(\|\Phi _{xt}\|+\|\varphi _{xx}\|)
    \nonumber \\
    & \leq \eta \|(\Phi _{xt},\varphi _{xx})\|^{2}+C _{\eta} \|\varphi _{x}\|\|\Phi _{xxt}\|
 \end{align*}
 for any $ \eta>0 $. Then, combining \eqref{fida-xx} with \eqref{vfi-xx-final} gives
 \begin{align}\label{vfi-xt-final}
      &\displaystyle  \frac{1}{2}\frac{\mathrm{d}}{\mathrm{d}t}\int _{\mathbb{R}_{+}}\left( \varphi _{xt}^{2} +p'(\bar{\rho})\varphi _{xx}^{2}-2 \mu\bar{\rho}\Phi _{x} \varphi _{xx}+\frac{\mu b}{a}\bar{\rho}\Phi  _{x}^{2}+\frac{\mu}{a}\bar{\rho}\Phi _{xx}^{2} \right) \mathrm{d}x+ C \|\Phi _{xt}\|^{2}+ \alpha\|\varphi_{xt}\|^{2}
       \nonumber \\
            & ~\displaystyle \leq -\frac{1}{2}\frac{\mathrm{d}}{\mathrm{d}t}\int _{\mathbb{R}_{+}}[p'(\varphi _{x}+\bar{\rho})-p'(\bar{\rho})]\varphi _{xx}^{2} \mathrm{d}x
       +  \frac{\mathrm{d}}{\mathrm{d}t}\int _{\mathbb{R}_{+}}[p'(\varphi _{x}+\bar{\rho})-p'(\bar{\rho})-p''(\bar{\rho})\varphi _{x}]\bar{\rho}_{x} \varphi _{xx}\mathrm{d}x
        \nonumber \\
        &~\displaystyle \quad +\frac{\mathrm{d}}{\mathrm{d}t} \int _{\mathbb{R}_{+}}\mu (\varphi _{x}\Phi _{x}+\varphi _{x}\bar{\phi}_{x})\varphi _{xx} \mathrm{d}x +
        \frac{\mathrm{d}}{\mathrm{d}t}\int _{\mathbb{R}_{+}}\frac{\varphi _{t}^{2}}{(\varphi _{x}+\bar{\rho})^{2}} \left( \frac{1}{2} \varphi _{xx}^{2}+\bar{\rho}_{x}\varphi _{xx}\right) \mathrm{d}x
         \nonumber \\
         &~ \displaystyle \quad+  C(\delta+ \varepsilon) \|(\varphi _{xt},\varphi _{xx},\Phi _{xt})\|^{2}+ \eta \|(\Phi _{xt},\varphi _{xx})\|^{2}+C _{ \eta} \|\varphi _{x}\|\|\Phi _{xxt}\|
 \end{align}
 for any $ \eta>0 $, provided $ \varepsilon $ and $ \delta $ are suitably small, where we have used \eqref{bar-rho-bounds}.

Finally, similar to the proof of Lemma \ref{lem-energy-lever}, adding \eqref{vfi-x-fin-ineq} with \eqref{vfi-xt-final} multiplied by a constant $ K _{1}>\frac{2}{\alpha} $, it follows that
 \begin{align}\label{con-vfi-xx-ineq-diff}
      &\displaystyle  \frac{\mathrm{d}}{\mathrm{d}t}\mathcal{G}_{1}(t)
      +C\|(\varphi _{xx},\Phi _{x},\Phi _{xx}, \Phi _{xt})\|^{2} +\left( K _{1}\alpha-1 \right)\|\varphi _{xt}\| ^{2}
       \nonumber \\
       &~\displaystyle
       \leq C(\delta+ \varepsilon) \|(\varphi _{xt},\varphi _{xx},\Phi _{xt})\|^{2}+ \eta \|(\Phi _{xt},\varphi _{xx})\|^{2}+C _{\eta} \|\varphi _{x}\|\|\Phi _{xxt}\|+C _{\eta}\|\varphi _{x}\|^{2}
      ,
 \end{align}
 where $ K _{1}\alpha-1>1 $, $  \mathcal{G}_{1}(t) $ is given by
 \begin{align}\label{g-1}
 \displaystyle \mathcal{G}_{1}(t):&=\int _{\mathbb{R}_{+}}\left\{  \frac{K _{1}}{2}\varphi _{xt}^{2}+ \frac{\alpha}{2} \varphi _{x}^{2}+ \varphi _{x} \varphi _{xt}+\frac{\mu\bar{\rho}}{2a}\Phi _{x}^{2} +\frac{K _{1}}{2}\Big[p'(\bar{\rho})\varphi _{xx}^{2}-2 \mu\bar{\rho}\Phi _{x} \varphi _{xx}+\frac{\mu\bar{\rho}}{a}( b\Phi  _{x}^{2}+\Phi _{xx}^{2})\Big] \right\}\mathrm{d}x
  \nonumber \\
              & ~\displaystyle \quad +\frac{K _{1}}{2}\int _{\mathbb{R}_{+}}[p'(\varphi _{x}+\bar{\rho})-p'(\bar{\rho})]\varphi _{xx}^{2} \mathrm{d}x
       -  \int _{\mathbb{R}_{+}}K _{1}[p'(\varphi _{x}+\bar{\rho})-p'(\bar{\rho})-p''(\bar{\rho})\varphi _{x}]\bar{\rho}_{x} \varphi _{xx}\mathrm{d}x
        \nonumber \\
        &~\displaystyle \quad - \int _{\mathbb{R}_{+}}\mu K _{1}(\varphi _{x}\Phi _{x}+\varphi _{x}\bar{\phi}_{x})\varphi _{xx} \mathrm{d}x +K _{1}
        \int _{\mathbb{R}_{+}}\frac{\varphi _{t}^{2}}{(\varphi _{x}+\bar{\rho})^{2}} \left( \frac{1}{2} \varphi _{xx}^{2}+\bar{\rho}_{x}\varphi _{xx}\right) \mathrm{d}x
         \nonumber \\
         &\displaystyle =\mathcal{G}_{1,0}+\mathcal{G}_{1,1}+\mathcal{G}_{1,2}+ \mathcal{G} _{1,3}+\mathcal{G} _{1,4}.
 \end{align}
 Taking $ K _{1} $ large enough such that
 \begin{gather*}
 \displaystyle  \frac{K _{1}}{2}\varphi _{xt}^{2}+ \frac{\alpha}{2} \varphi _{x}^{2}+ \varphi _{x} \varphi _{xt} \geq C \left( \varphi _{xt}^{2}+\varphi _{x}^{2} \right)
 \end{gather*}
for some constant $ C>0 $ independent of $ t $, recalling \eqref{vfi-xx-equiv}, we have
\begin{align*}
\displaystyle  \mathcal{G}_{1,0} \sim \|(\varphi _{x},\varphi _{xx},\varphi _{xt},\Phi _{x},\Phi _{xx})\|^{2}.
\end{align*}
By \eqref{p-condition}, \eqref{con-stat-decay}, \eqref{samlness-sumup}, \eqref{rho-lower-bd}, Cauchy-Schwarz inequality and the Taylor expansion, we have
\begin{align}
\displaystyle \mathcal{G}_{1,1} &\leq  C \|\bar{\rho}_{x}\|_{L ^{\infty}}\|\varphi _{x}\|\|\varphi _{xx}\| \leq  C \delta \|(\varphi _{x},\varphi _{xx})\|^{2},
 \nonumber \\[4pt]
 \displaystyle  \mathcal{G}_{1,2}& \leq C \|\varphi _{xx}\|\|\bar{\rho}_{x}\|_{L ^{\infty}} \|\varphi _{x}\|^{2} \leq  C \delta \|(\varphi _{x},\varphi _{xx})\|^{2},
 \nonumber \\[4pt]
 \displaystyle \mathcal{G}_{1,3}& \leq C \|\varphi _{xx}\|\left( \|\varphi _{x}\|\|\Phi _{x}\|_{L ^{\infty}}+ \|\bar{\phi}_{x}\|_{L ^{\infty}}\|\varphi _{x}\| \right) \leq C  \left( \varepsilon+\delta \right)   \|(\varphi _{x},\varphi _{xx})\|^{2},
  \nonumber \\[4pt]
  \displaystyle \mathcal{G}_{1,4}& \leq  C \|\varphi _{xx}\|\left( \|\varphi _{x}\|\|\Phi _{x}\|_{L ^{\infty}}+ \|\bar{\phi}_{x}\|_{L ^{\infty}}\|\varphi _{x}\| \right) \leq C  \left( \varepsilon+\delta \right)   \|(\varphi _{x},\varphi _{xx})\|^{2}.
   \nonumber
     \end{align}
Therefore, for sufficiently small $ \varepsilon $ and $ \delta $, we have from \eqref{g-1} that
\begin{align}\label{g-1-equiv}
\displaystyle \mathcal{G}_{1}
   & \sim \|(\varphi _{x},\varphi _{xx},\varphi _{xt},\Phi _{x},\Phi _{xx})\|^{2}.
\end{align}
With \eqref{g-1-equiv}, after integrating \eqref{con-vfi-xx-ineq-diff} over $ (0,t) $ and taking $ \varepsilon $, $ \delta $, $ \eta $ sufficiently small, we get \eqref{con-fip-xx} and thus finish the proof of Lemma \ref{lem-fip-xx}.
  \end{proof}
  To close the \emph{a priori} assumption \eqref{aproiri-assumpt}, some higher-order estimates of solutions are needed. Let us begin with the estimates on $ (\varphi _{xxx},\varphi _{xxt})$.
\begin{lemma}\label{lem-vfi-xxx}
Let the assumptions in Proposition \ref{prop-key} hold. If $\varepsilon $ and $ \delta$ are sufficiently small, then the solution $ (\varphi,\Phi) $ of \eqref{problem-linearized-simple-a}--\eqref{problem-linearized-simple-e} satisfies
\begin{align}\label{con-vfi-xxx}
&\displaystyle \|(\varphi _{xxx},\varphi _{xxt})\|^{2}+ \int _{0}^{t}\|\varphi _{xx \tau}\|^{2}\mathrm{d}\tau
 \nonumber \\
 &~ \displaystyle \leq C (\delta+ \varepsilon)\int _{0}^{t}\|(\Phi _{xx \tau},\varphi _{xxx})\|^{2}\mathrm{d}\tau+
  C \int _{0}^{t}\left(  \|(\varphi _{x},\varphi _{\tau})\|_{1}^{2}+\|( \Phi _{x \tau},\Phi _{xx})\|^{2} +\|\Phi _{x \tau \tau}\|^{2}\right) \mathrm{d}\tau
  \nonumber \\
  &~ \quad +
   C \left( \|(\varphi _{0x},\psi _{0})\|_{2}^{2}+\|\Phi _{0xx}\|^{2} \right) +C \left( \|(\varphi _{x},\varphi _{\tau})\|_{1}^{2}+\|(\Phi _{xx},\Phi _{x \tau})\|^{2} \right)
\end{align}
for any $ t \in (0,T) $, where the constant $C>0  $ is independent of $ T $.
\end{lemma}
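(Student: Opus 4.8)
The strategy is to carry out the top-order analogue of the energy scheme of Lemmas~\ref{lem-energy-lever} and~\ref{lem-fip-xx}. First I would differentiate \eqref{problem-linearized-simple-a} and \eqref{problem-linearized-simple-c} twice in $x$ (equivalently, differentiate \eqref{Eq-x-simple-a}--\eqref{Eq-x-simple-b} once more) to obtain a wave-type equation for $\varphi_{xx}$ with principal part $\varphi_{xxtt}+\alpha\varphi_{xxt}-(p'(\bar\rho)\varphi_x)_{xxx}$ and source $\mathcal{F}_{xx}+\mathcal{H}_{xx}$, together with $\Phi_{xxt}=\Phi_{xxxx}+a\varphi_{xxx}-b\Phi_{xx}$. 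Then I would multiply the $\varphi_{xx}$-equation by $\varphi_{xxt}$, integrate over $\mathbb{R}_+$, and integrate by parts once in $x$; since $\bar\rho$ is $t$-independent (so that differentiating $p'(\bar\rho)$ in $t$ produces nothing), this yields an identity of the form $\tfrac12\tfrac{\mathrm d}{\mathrm dt}\int_{\mathbb R_+}(\varphi_{xxt}^2+p'(\bar\rho)\varphi_{xxx}^2)\,\mathrm dx+\alpha\|\varphi_{xxt}\|^2=(\text{boundary term at }x=0)+(\text{terms carrying a factor }\bar\rho_x\text{ or }\bar\rho_{xx})+\int_{\mathbb R_+}(\mathcal F_{xx}+\mathcal H_{xx})\varphi_{xxt}\,\mathrm dx$, in which $\varphi_{xxt}^2+p'(\bar\rho)\varphi_{xxx}^2\sim|(\varphi_{xxx},\varphi_{xxt})|^2$ by \eqref{p-condition} and \eqref{bar-rho-bounds}; no separate multiplier for the $\Phi_{xx}$-equation is needed since $\|\Phi_{xxx}\|,\|\Phi_{xxt}\|$ do not enter the left-hand side of \eqref{con-vfi-xxx}.

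Next I would estimate the source. The terms containing $\bar\rho_x,\bar\rho_{xx},\bar\phi_x$ are absorbed by the exponential decay \eqref{con-stat-decay} (hence a factor $\delta$) together with the Hardy inequality \eqref{Hardy}. For $\mathcal F_{1,xx}$ I would use the Taylor-expansion identity \eqref{cal-f-1-compu}, reorganizing $\int\mathcal F_{1,xx}\varphi_{xxt}$ by integration by parts in $x$ and in $t$ so that the fourth-order $\varphi$-derivatives cancel and every surviving contribution is at least quadratic and, via \eqref{samlness-sumup} and \eqref{rho-lower-bd}, carries a factor $\varepsilon$, the $t$-derivatives being shifted onto cubic terms that join the functional $\mathcal G_2$ below; the cubic term $\int\mathcal H_{xx}\varphi_{xxt}$ is handled in the same spirit as \eqref{hx-vfi-xt}. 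The only genuinely new source term is the principal chemotactic coupling $-\mu\int\bar\rho\,\Phi_{xxx}\varphi_{xxt}\,\mathrm dx$ hidden in $\int\mathcal F_{2,xx}\varphi_{xxt}$ (recall \eqref{F-1-2}): I would substitute $\Phi_{xxx}=\Phi_{xt}-a\varphi_{xx}+b\Phi_x$ from \eqref{Eq-x-simple-b}, so that $\mu a\int\bar\rho\varphi_{xx}\varphi_{xxt}\,\mathrm dx=\tfrac{\mu a}{2}\tfrac{\mathrm d}{\mathrm dt}\int\bar\rho\varphi_{xx}^2\,\mathrm dx$ joins the energy while $-\mu\int\bar\rho(\Phi_{xt}+b\Phi_x)\varphi_{xxt}\,\mathrm dx$, after one integration by parts in $t$, is controlled by $\|\Phi_{x\tau\tau}\|^2+\|\Phi_{x\tau}\|^2+\|\varphi_{xx}\|^2$ up to a time-boundary term of lower order; this is where the term $\|\Phi_{x\tau\tau}\|^2$ on the right of \eqref{con-vfi-xxx} originates.

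The main obstacle is the boundary term at $x=0$, which here does \emph{not} cancel. In the lower-order estimates the boundary contribution is proportional to $[(p'(\bar\rho)\varphi_x)_x+\mathcal F]|_{x=0}=0$ by \eqref{boundary-simple-case}, but after two $x$-derivatives the relevant trace is $(p'(\bar\rho)\varphi_x)_{xx}|_{x=0}=(\varphi_{xtt}+\alpha\varphi_{xt}-\mathcal F_x-\mathcal H_x)|_{x=0}$ (obtained by evaluating the $x$-differentiated equation at $x=0$), which is not determined by the boundary data. I would bound $(p'(\bar\rho)\varphi_x)_{xx}(0)\,\varphi_{xxt}(0)$ by combining: (i) the one-dimensional trace/interpolation inequality $\|f\|_{L^\infty}\le C\|f\|^{1/2}\|f_x\|^{1/2}$ and Young's inequality, exactly as in \eqref{bd-fi-x-simp} and the estimate following \eqref{fida-xx}; and (ii) the boundary relations obtained by evaluating \eqref{problem-linearized-simple-a}, \eqref{problem-linearized-simple-c} and their $x$- and $t$-derivatives at $x=0$ (for instance $p'(\bar\rho(0))\varphi_{xx}(0)=-p''(\bar\rho(0))\bar\rho_x(0)\varphi_x(0)-\mathcal F(0)$, $\Phi_{xx}|_{x=0}=-a\varphi_x$ from \eqref{fida-bdcon-xx}, and $\Phi_{xxt}|_{x=0}=-a\varphi_{xt}$), which allow one to express $\varphi_{xxt}(0)$ through traces of order at most $\varphi_{xxt},\Phi_{xxt}$. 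The net effect is a bound $\eta\,\|(\varphi_{xxx},\varphi_{xxt},\Phi_{xxt})\|^2+C(\delta+\varepsilon)\|\varphi_{xxx}\|^2+C_\eta(\|(\varphi_x,\varphi_\tau)\|_1^2+\|(\Phi_{xx},\Phi_{x\tau})\|^2)$, in which $\eta\|\varphi_{xxt}\|^2$ is swallowed by the dissipation $\alpha\|\varphi_{xxt}\|^2$, the $\varphi_{xxx}$- and $\Phi_{xxt}$-contributions are kept on the right of \eqref{con-vfi-xxx} with the small coefficient $\delta+\varepsilon$, and the rest is of lower order; the same device, applied to the quadratic boundary residue of $\mathcal H_{xx}$, accounts for the remaining lower-order terms in \eqref{con-vfi-xxx}.

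Finally I would assemble the estimate: collect all the $\tfrac{\mathrm d}{\mathrm dt}$-terms into a functional $\mathcal G_2(t)$ equivalent to $|(\varphi_{xx},\varphi_{xxx},\varphi_{xxt})|^2$ plus cubic corrections (the analogue of \eqref{g-1}, the equivalence holding once $\varepsilon,\delta$ are small by \eqref{p-condition}, \eqref{bar-rho-bounds}, \eqref{samlness-sumup}, \eqref{rho-lower-bd}), choose $\varepsilon,\delta,\eta$ small, and integrate the resulting differential inequality in $\tau$ over $(0,t)$. The datum $\mathcal G_2(0)$ is bounded by $\|(\varphi_{0x},\psi_0)\|_2^2+\|\Phi_{0xx}\|^2$, the remaining lower-order pointwise and integral quantities $\|(\varphi_x,\varphi_\tau)\|_1^2$, $\|(\Phi_{xx},\Phi_{x\tau})\|^2$ (and $\int_0^t$ of these) are supplied by Lemmas~\ref{lem-energy-lever} and~\ref{lem-fip-xx}, and all constants depend only on the fixed parameters and on $\rho_+,a,b$ (in particular not on $T$); this gives \eqref{con-vfi-xxx}.
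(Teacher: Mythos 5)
Your multiplier is genuinely different from the paper's: you propose to multiply the twice-$x$-differentiated equation by $\varphi_{xxt}$, whereas the paper multiplies \eqref{Eq-x-simple-a} (once $x$-differentiated) by $-\bigl(\bigl(p'(\bar\rho)\varphi_x\bigr)_{xx}+\mathcal F_x\bigr)_t$.  The difference is not cosmetic: the paper's multiplier is engineered precisely so that the only boundary term produced by integration by parts in $x$ is $\varphi_{xtt}(0,t)\cdot\bigl[\bigl(p'(\bar\rho)\varphi_x\bigr)_x+\mathcal F\bigr]_t(0,t)$, which vanishes identically by \eqref{boundary-simple-case}.  Your multiplier does not enjoy this cancellation, and this is where the argument breaks.

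The gap is in your treatment of the boundary residue $-\bigl(p'(\bar\rho)\varphi_x\bigr)_{xx}\varphi_{xxt}\big|_{x=0}$.  You propose to substitute $\bigl(p'(\bar\rho)\varphi_x\bigr)_{xx}(0)=\bigl(\varphi_{xtt}+\alpha\varphi_{xt}-\mathcal F_x-\mathcal H_x\bigr)(0)$ from the $x$-differentiated equation, then close via the trace inequality $\|f\|_{L^\infty}\le C\|f\|^{1/2}\|f_x\|^{1/2}$ and Young.  But $\bigl(p'(\bar\rho)\varphi_x\bigr)_{xx}$ contains a $p'(\bar\rho)\varphi_{xxx}$ term, and the trace of $\varphi_{xxx}$ at $x=0$ requires $\varphi_{xxxx}\in L^2$; likewise the trace of $\varphi_{xtt}(0)$ requires $\varphi_{xxtt}\in L^2$.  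Neither of these is available in the solution space $X(0,T)$ (where $\varphi\in C([0,T];H^3)\cap C^1([0,T];H^2)$, $\psi\in C^1([0,T];H^1)$), so both the lemma's hypotheses and the local existence result in Proposition \ref{prop-local} preclude the interpolation you invoke.  Nor does the boundary relation you cite help with $\varphi_{xxt}(0)$: differentiating $\bigl[\bigl(p'(\bar\rho)\varphi_x\bigr)_x+\mathcal F\bigr]\big|_{x=0}=0$ in $t$ does express $\varphi_{xxt}(0)$ through $\varphi_{xt}(0),\Phi_{xt}(0)$ and lower-order quantities (whose traces \emph{are} within reach), but the companion factor $\bigl(p'(\bar\rho)\varphi_x\bigr)_{xx}(0)$ remains third-order in $x$ and has no analogous boundary reduction (you cannot differentiate the $x=0$ boundary relation in $x$).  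So the boundary product is genuinely out of reach with your multiplier; the remedy is the paper's choice of multiplier, which removes the boundary term altogether rather than trying to estimate it.

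Your remaining steps are reasonable in spirit — the chemotactic substitution of $\Phi_{xxx}$ via \eqref{Eq-x-simple-b} and the $t$-integration by parts that produces $\|\Phi_{x\tau\tau}\|^2$ parallel what the paper does with $\mathcal F_{tt}$ inside $\mathcal I_2$, and the treatment of $\mathcal H$ mirrors the paper's $\mathcal I_3$ decomposition using \eqref{h-x-rearrang}.  But the boundary issue is fatal as written, and it is precisely the obstruction the paper's multiplier was designed to circumvent.
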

 \begin{proof}
 Multiplying \eqref{Eq-x-simple-a} by $ -(( p'(\bar{\rho})\varphi _{x})_{xx}+ \mathcal{F}_{x}) _{t}$ followed by an integration over $ \mathbb{R}_{+} $, we obtain
 \begin{align}\label{var-xxx-single}
 & \displaystyle \frac{1}{2}\frac{\mathrm{d}}{\mathrm{d}t}\int _{\mathbb{R}_{+}}p'(\bar{\rho})\varphi _{xxt}^{2} \mathrm{d}x+ \alpha \int _{\mathbb{R}_{+}}p'(\bar{\rho})\varphi _{xxt}^{2} \mathrm{d}x
  \nonumber \\
  & ~\displaystyle =- \frac{\mathrm{d}}{\mathrm{d}t}\int _{\mathbb{R}_{+}}(\varphi _{xxt}+\alpha \varphi _{xx})\mathcal{F}_{t} \mathrm{d}x -\frac{1}{2}\frac{\mathrm{d}}{\mathrm{d}t}\int _{\mathbb{R}_{+}} [( p'(\bar{\rho})\varphi _{x})_{xx}+\mathcal{F}_{x}] ^{2} \mathrm{d}x  \nonumber \\
   & ~\displaystyle \quad  - \frac{\mathrm{d}}{\mathrm{d}t} \int _{\mathbb{R}_{+}}\varphi _{xxt}p'(\bar{\rho})\bar{\rho}_{x}\varphi _{xt} \mathrm{d}x+
 \underbrace{ \int _{\mathbb{R}_{+}} \varphi _{xxt} p'(\bar{\rho})\bar{\rho}_{x}\left(\varphi _{xtt}-\varphi _{xt} \right)   \mathrm{d}x }_{\mathcal{I}_{1}}
    \nonumber \\
  &~ \displaystyle \quad\underbrace{+ \int _{\mathbb{R}_{+}}(\varphi _{xxt}+\alpha \varphi _{xx})\mathcal{F}_{tt} \mathrm{d}x}_{\mathcal{I}_{2}} \underbrace{- \int _{\mathbb{R}_{+}}\mathcal{H} _{x}(( p'(\bar{\rho})\varphi _{x})_{xx}+ \mathcal{F}_{x}) _{t}  \mathrm{d}x}_{\mathcal{I}_{3}}.
 \end{align}
 Denote
 \begin{align}
 \displaystyle \mathcal{G}_{2}(t):&= \frac{1}{2}\int _{\mathbb{R}_{+}}p'(\bar{\rho})\varphi _{xxt}^{2} \mathrm{d}x+\int _{\mathbb{R}_{+}}(\varphi _{xxt}+\alpha \varphi _{xx})\mathcal{F}_{t} \mathrm{d}x
  \nonumber \\
  &\displaystyle \quad+\frac{1}{2}\int _{\mathbb{R}_{+}} [( p'(\bar{\rho})\varphi _{x})_{xx}+\mathcal{F}_{x}] ^{2} \mathrm{d}x  + \int _{\mathbb{R}_{+}}\varphi _{xxt}p'(\bar{\rho})\bar{\rho}_{x}\varphi _{xt} \mathrm{d}x,\nonumber
 \end{align}
 then \eqref{var-xxx-single} can be rewritten as
 \begin{align}\label{g-2-eq-refor}
 \displaystyle \frac{\mathrm{d}}{\mathrm{d}t }\mathcal{G}_{2}(t)+ \alpha \int _{\mathbb{R}_{+}}p'(\bar{\rho})\varphi _{xxt}^{2} \mathrm{d}x=\mathcal{I}_{1}+\mathcal{I}_{2}+\mathcal{I}_{3}.
   \end{align}
Noting that
\begin{align}
\displaystyle   \mathcal{F}_{t}&= [p'(\varphi _{x}+\bar{\rho})-p'(\bar{\rho})]\varphi _{xxt}+[p''(\varphi _{x}+\bar{\rho})-p''(\bar{\rho})]\varphi _{xt}\bar{\rho}_{x}+p''(\varphi _{x}+\bar{\rho})\varphi _{xt}\varphi _{xx}
 \nonumber \\
 & \quad \displaystyle - \mu \left(  \varphi _{xt}\Phi _{x}+\varphi _{x}\Phi _{xt}+\varphi _{xt}\bar{\phi}_{x}+\bar{\rho}\Phi _{xt} \right), \nonumber
\end{align}
we utilize \eqref{p-condition}, \eqref{con-stat-decay}, \eqref{samlness-sumup}, \eqref{bar-rho-bounds} and the mean value theorem to get
\begin{align*}
\displaystyle \left\vert \mathcal{F}_{t}\right\vert \leq C \varepsilon \left\vert \varphi _{xxt}\right\vert+C(\varepsilon+\delta) \left( \left\vert \varphi _{xt}\right\vert+\left\vert \Phi _{xt}\right\vert\right) +C \left\vert \Phi _{xt}\right\vert \leq C \varepsilon \left\vert \varphi _{xxt}\right\vert+C\left( \left\vert \varphi _{xt}\right\vert+\left\vert \Phi _{xt}\right\vert \right).
\end{align*}
Then by \eqref{con-stat-decay} and Cauchy-Schwarz inequality, we have
\begin{align}\label{g-2-1}
&\displaystyle \left\vert  \int _{\mathbb{R}_{+}}\varphi _{xxt}p'(\bar{\rho})\bar{\rho}_{x}\varphi _{xt} \mathrm{d}x\right\vert+\left\vert \int _{\mathbb{R}_{+}}(\varphi _{xxt}+\alpha \varphi _{xx})\mathcal{F}_{t} \mathrm{d}x \right\vert
 \nonumber \\
 &~\displaystyle\leq C( \varepsilon + \delta)\left\|\varphi _{xxt}\right\|^{2}+C\|(\varphi _{xx},\varphi _{xt},\Phi _{xt})\|^{2}.
\end{align}
A direct computation leads to
 \begin{align}
 \displaystyle  \mathcal{F}_{x}&= [p'(\varphi _{x}+\bar{\rho})-p'(\bar{\rho})-p''(\bar{\rho})\varphi _{x}]\bar{\rho}_{xx} +[p'(\varphi _{x}+\bar{\rho})-p'(\bar{\rho})]\varphi _{xxx}
  \nonumber \\
  & \displaystyle \quad+[p''(\varphi _{x}+\bar{\rho})-p''(\bar{\rho})-p'''(\bar{\rho})\varphi _{x}]\bar{\rho}_{x}^{2}+2[p''(\varphi _{x}+\bar{\rho})-p''(\bar{\rho})]\varphi _{xx}\bar{\rho}_{x}
   \nonumber \\
   & \displaystyle \quad +p''(\varphi _{x}+\bar{\rho})\varphi _{xx}^{2}+\varphi _{xx}\Phi _{x}+\varphi _{x}\Phi _{xx}+\varphi _{xx}\bar{\phi}_{x}+\bar{\rho}\Phi _{xx},\label{f-1-x-com}
 \\
       \displaystyle (p'(\bar{\rho})\varphi _{x})_{xx}&=p'(\bar{\rho})\varphi _{xxx}+2p''(\bar{\rho})\bar{\rho}_{x}\varphi _{xx}+p''(\bar{\rho})\bar{\rho}_{xx}\varphi _{x}+p'''(\bar{\rho})\bar{\rho}_{x}^{2}\varphi _{x}.\label{p-xxx-compu}
 \end{align}
Combining the above identities with \eqref{p-condition}, \eqref{con-stat-decay}, \eqref{samlness-sumup}--\eqref{rho-lower-bd} and the Taylor expansion yields that
 \begin{gather}
 \displaystyle  \left\vert \mathcal{F}_{x}\right\vert \leq C \varepsilon \left\vert \varphi _{xxx}\right\vert+C (\varepsilon+\delta)\left( \left\vert \varphi _{xx}\right\vert+\left\vert \varphi _{x}\right\vert \right) +C \left\vert \Phi _{xx}\right\vert, \label{F-x-bdd}\\
 \displaystyle \left\vert (p'(\bar{\rho})\varphi _{x})_{xx}- p'(\bar{\rho})\varphi _{xxx}\right\vert \leq C \delta(\left\vert \varphi _{x}\right\vert+\left\vert \varphi _{xx}\right\vert). \label{p-xxx-bd}
 \end{gather}
 We thus deduce that
\begin{align}\label{vfi-xxx-1}
&\displaystyle  \int _{\mathbb{R}_{+}} [( p'(\bar{\rho})\varphi _{x})_{xx}+\mathcal{F}_{x}] ^{2} \mathrm{d}x=\int _{\mathbb{R}_{+}} \left\vert p'(\bar{\rho})\varphi _{xxx}+( p'(\bar{\rho})\varphi _{x})_{xx}-p'(\bar{\rho})\varphi _{xxx}+\mathcal{F}_{x}\right\vert ^{2}\mathrm{d}x
 \nonumber \\
  &~\displaystyle \geq \int _{\mathbb{R}_{+}}\left\vert p'(\bar{\rho})\varphi _{xxx}\right\vert ^{2}\mathrm{d}x -2 \int _{\mathbb{R}_{+}}\left\vert p'(\bar{\rho})\varphi _{xxx}\right\vert  \left(  \left\vert \mathcal{F}_{x}\right\vert+ \left\vert (p'(\bar{\rho})\varphi _{x})_{x}- p'(\bar{\rho})\varphi _{xxx}\right\vert\right) \mathrm{d}x
 \nonumber \\
&~\displaystyle \geq \frac{1}{2}\int _{\mathbb{R}_{+}}\left\vert p'(\bar{\rho})\varphi _{xxx}\right\vert ^{2} \mathrm{d}x-C \|(\varphi _{x},\varphi _{xx},\Phi _{xx})\|^{2}
\end{align}
for suitably small $ \varepsilon $ and $ \delta $, and that
\begin{gather}\label{g-2-3}
\displaystyle  \int _{\mathbb{R}_{+}} [( p'(\bar{\rho})\varphi _{x})_{xx}+\mathcal{F}_{x}] ^{2} \mathrm{d}x \leq C \left( \|\varphi _{x}\|_{2}^{2}+\|\Phi _{xx}\|^{2} \right).
\end{gather}
Here \eqref{p-condition}, \eqref{bar-rho-bounds} and Cauchy-Schwarz inequality have been used. Thanks to \eqref{g-2-1}, \eqref{vfi-xxx-1} and \eqref{g-2-3}, it follows that
\begin{align}\label{G-2bd}
\begin{cases}
  \mathcal{G}_{2}(t) \leq C \left( \|\varphi _{xt}\|_{1}^{2}+\|\varphi _{x}\|_{2}^{2}+\|(\Phi _{xx},\Phi _{xt})\|^{2} \right),
  \\
 \displaystyle  \mathcal{G}_{2}(t) \geq C\|(\varphi _{xxt},\varphi _{xxx})\|^{2}-C \left( \|\varphi _{x}\|_{1}^{2}+\|(\Phi _{xx},\Phi _{xt},\varphi _{xt})\|^{2} \right).
\end{cases}
 \end{align}
where we have used \eqref{p-condition} and the bounds of $ \bar{\rho} $. Now let us turn to the estimates of $ \mathcal{I}_{i} $. From \eqref{problem-linearized-simple-a}, we get
\begin{gather}\label{vfi-xtt-iden}
\displaystyle \varphi _{xtt}= \left( p'(\bar{\rho})\varphi _{x} \right)_{xx}-\alpha \varphi _{xt}+\mathcal{F}_{x}+ \mathcal{H}_{x}.
\end{gather}
A direct computation leads to
\begin{align}\label{h-x}
 \displaystyle \mathcal{H}_{x}&= \frac{2 \varphi _{tx}^{2}+ 2 \varphi _{t} \varphi _{txx}}{\varphi _{x}+\bar{\rho}}- \frac{4\varphi _{t} \varphi _{tx}(\varphi _{xx}+\bar{\rho}_{x})}{(\varphi _{x}+\bar{\rho})^2} - \frac{\varphi _{t}^{2}(\varphi _{xxx}+\bar{\rho}_{xx})}{(\varphi _{x}+\bar{\rho})^2}+ \frac{2\varphi _{t}^{2}(\varphi _{xx}+\bar{\rho}_{x})^{2}}{(\varphi _{x}+\bar{\rho})^3}.
 \end{align}
 This along with \eqref{p-condition}, \eqref{con-stat-decay}, \eqref{samlness-sumup} and \eqref{rho-lower-bd} implies that
\begin{gather}\label{cal-h-x-bd}
\displaystyle \left\vert \mathcal{H}_{x}\right\vert \leq C \varepsilon(\left\vert \varphi _{xxx}\right\vert+\left\vert \varphi _{xxt}\right\vert)+ C \left( \left\vert \varphi _{xt}\right\vert+\left\vert \varphi _{t}\right\vert \right)
\end{gather}
for sufficiently small $ \varepsilon $ and $ \delta $. Therefore, it holds that
\begin{align}\label{vfi-xtt-l2}
\displaystyle  \| \varphi _{xtt}\| \leq C \| \left( p'(\bar{\rho})\varphi _{x} \right)_{xx}\|+\|\alpha \varphi _{xt}\|+\|\mathcal{F}_{x}\|+ \|\mathcal{H}_{x}\|\leq C \left( \|(\varphi _{x},\varphi _{t})\|_{2}+\|\Phi _{xx}\| \right),
\end{align}
where we have used \eqref{p-condition}, \eqref{con-stat-decay}, \eqref{bar-rho-bounds}, \eqref{F-x-bdd} and \eqref{cal-h-x-bd}. Resorting to \eqref{con-stat-decay}, \eqref{vfi-xtt-l2} and Cauchy-Schwarz inequality, we get
\begin{align}
\displaystyle  \mathcal{I} _{1}\leq C \delta \|\varphi _{xxt}\|\|(\varphi _{xtt},\varphi _{xt})\| \leq C \delta \|(\varphi _{xxx},\varphi _{xxt})\|^{2}+C \left( \|(\varphi _{x},\varphi _{t})\|_{1}^{2}+\left\|\Phi _{xx}\right\|^{2} \right) .\nonumber
\end{align}
For $ \mathcal{I}_{2} $, a direct computation gives
\begin{align}
 \displaystyle\mathcal{F}_{tt}&=  [p'(\varphi _{x}+\bar{\rho})-p'(\bar{\rho})]\varphi _{xxtt}+2p''(\varphi _{x}+\bar{\rho})\varphi _{xt}\varphi _{xxt} +[p''(\varphi _{x}+\bar{\rho})-p''(\bar{\rho})]\varphi _{xtt}\bar{\rho}_{x}
 \nonumber \\
 & \displaystyle \quad +p'''(\varphi _{x}+\bar{\rho})\varphi _{xt}^{2}\bar{\rho}_{x}+p'''(\varphi _{x}+\bar{\rho})\varphi _{xt}^{2}\varphi _{xx}+p''(\varphi _{x}+ \bar{\rho})\varphi _{xtt}\varphi _{xx}
  \nonumber \\
  & \displaystyle \quad  - \mu \left(  \varphi _{xtt}\Phi _{x}+2\varphi _{xt}\Phi _{xt}+\varphi _{x}\Phi _{xtt}+\varphi _{xtt}\bar{\phi}_{x}+\bar{\rho}\Phi _{xtt} \right)
   \nonumber \\
   &\displaystyle =[p'(\varphi _{x}+\bar{\rho})-p'(\bar{\rho})]\varphi _{xxtt}+\mathcal{J},
    \nonumber
\end{align}
where, due to \eqref{p-condition}, \eqref{con-stat-decay} and \eqref{samlness-sumup}, $ \mathcal{J} $ can be estimated as follows
\begin{align}
 \displaystyle \left\vert \mathcal{J}\right\vert \leq C(\varepsilon+\delta)\left( \left\vert \varphi _{xtt}\right\vert+\left\vert \varphi _{xt}\right\vert +\left\vert \Phi _{xt}\right\vert\right)  +C \varepsilon \left\vert \varphi _{xxt}\right\vert+C \left\vert \Phi _{xtt}\right\vert.
  \nonumber
 \end{align}
Then, owing to \eqref{p-condition}, \eqref{samlness-sumup}--\eqref{rho-lower-bd}, \eqref{vfi-xtt-l2}, Cauchy-Schwarz inequality and the mean value theorem, we have
\begin{align}
\displaystyle \mathcal{I}_{2}&=\int _{\mathbb{R}_{+}}(\varphi _{xxt}+\alpha \varphi _{xx}) \left( [p'(\varphi _{x}+\bar{\rho})-p'(\bar{\rho})]\varphi _{xxtt}+\mathcal{J} \right)  \mathrm{d}x
 \nonumber \\
 &\displaystyle= \frac{\mathrm{d}}{\mathrm{d}t}\int _{\mathbb{R}_{+}}[p'(\varphi _{x}+\bar{\rho})-p'(\bar{\rho})]\Big( \frac{1}{2}\varphi _{xxt}^{2}+\varphi _{xx}\varphi _{xxt}\Big)  \mathrm{d}x -\alpha \int _{\mathbb{R}_{+}}[p'(\varphi _{x}+\bar{\rho})-p'(\bar{\rho})]\varphi _{xxt}^{2} \mathrm{d}x
 \nonumber \\
 &\displaystyle \quad-\int _{\mathbb{R}_{+}}p''(\varphi _{x}+\bar{\rho})\varphi _{xt} \Big( \frac{1}{2}\varphi _{xxt}^{2}+\varphi _{xx}\varphi _{xxt} \Big) \mathrm{d}x +\int _{\mathbb{R}_{+}}\left( \varphi _{xxt}+ \alpha \varphi _{xx} \right)\mathcal{J}  \mathrm{d}x
  \nonumber \\
  &\displaystyle  \leq\frac{\mathrm{d}}{\mathrm{d}t}\int _{\mathbb{R}_{+}}[p'(\varphi _{x}+\bar{\rho})-p'(\bar{\rho})]\Big( \frac{1}{2}\varphi _{xxt}^{2}+\varphi _{xx}\varphi _{xxt}\Big)  \mathrm{d}x +C \|\varphi _{x}\|_{L ^{\infty}}\|\varphi _{xxt}\|^{2}
   \nonumber \\
   &\displaystyle \quad+ C \left\|\varphi _{xt}\right\|_{L ^{\infty}} \left( \left\|\varphi _{xxt}\right\|^{2}+\left\|\varphi _{xx}\right\| \|\varphi _{xxt}\| \right)+C \|(\varphi _{xxt},\varphi _{xx})\|\|\mathcal{J}\|
    \nonumber \\
    &\displaystyle  \leq\frac{\mathrm{d}}{\mathrm{d}t}\int _{\mathbb{R}_{+}}[p'(\varphi _{x}+\bar{\rho})-p'(\bar{\rho})]\Big( \frac{1}{2}\varphi _{xxt}^{2}+\varphi _{xx}\varphi _{xxt}\Big)  \mathrm{d}x
    +C(\varepsilon+\delta)\|(\varphi _{xxt},\varphi _{xxx})\|^{2}
   \nonumber \\
   & \displaystyle \quad+ C \left( \|(\varphi _{x},\varphi _{t})\|_{1} ^{2}+\|(\Phi _{xt},\Phi _{xx})\| ^{2} \right) + \eta  \|\varphi _{xxt}\|^{2}+C _{\eta }\|\Phi _{xtt}\|^{2}
    \nonumber
\end{align}
for any $ \eta >0 $. To deal with $ \mathcal{I}_{3} $, we rearrange $ \mathcal{H}_{x} $ in \eqref{h-x} as follows
 \begin{align}\label{h-x-rearrang}
 \displaystyle \mathcal{ H}_{x}&
=\frac{ 2 \varphi _{t} \varphi _{txx}}{\varphi _{x}+\bar{\rho}}-\frac{\varphi _{t}^{2}(\varphi _{xxx}+\bar{\rho}_{xx})}{(\varphi _{x}+\bar{\rho})^{2}}+\left[ \frac{2 \varphi _{tx}^{2}}{\varphi _{x}+\bar{\rho}}- \frac{4\varphi _{t} \varphi _{tx}(\varphi _{xx}+\bar{\rho}_{x})}{(\varphi _{x}+\bar{\rho})^2} + \frac{2\varphi _{t}^{2}(\varphi _{xx}+\bar{\rho}_{x})^{2}}{(\varphi _{x}+\bar{\rho})^3} \right]
 \nonumber \\
 \displaystyle& =\frac{ 2 \varphi _{t} \varphi _{txx}}{\varphi _{x}+\bar{\rho}}-\frac{\varphi _{t}^{2}(\varphi _{xxx}+\bar{\rho}_{xx})}{(\varphi _{x}+\bar{\rho})^{2}}+\mathcal{L}_{1},
 \end{align}
 with
 \begin{align}\label{pa-x-L1}
 \displaystyle \left\vert \partial _{x} \mathcal{L}_{1}\right\vert\leq C (\varepsilon+\delta)\left\vert \varphi _{xxt}\right\vert+C \varepsilon \left\vert \varphi _{xxx}\right\vert+ C(\varepsilon+\delta)\left( \left\vert \varphi _{t}\right\vert +\left\vert \varphi _{xt}\right\vert \right)
 \end{align}
 for suitably small $ \varepsilon $ and $ \delta $, due to \eqref{con-stat-decay}, \eqref{samlness-sumup} and \eqref{rho-lower-bd}. Based on \eqref{h-x-rearrang}, we split $ \mathcal{I}_{3} $ into three parts:
 \begin{align*}
  \displaystyle \mathcal{I}_{3}&=\int _{\mathbb{R}_{+}}\frac{ 2 \varphi _{t} \varphi _{txx}}{\varphi _{x}+\bar{\rho}}(( p'(\bar{\rho})\varphi _{x})_{xx}+ \mathcal{F}_{x}) _{t}   \mathrm{d}x -\int _{\mathbb{R}_{+}}\frac{\varphi _{t}^{2}(\varphi _{xxx}+\bar{\rho}_{xx})}{(\varphi _{x}+\bar{\rho})^{2}}(( p'(\bar{\rho})\varphi _{x})_{xx}+ \mathcal{F}_{x}) _{t}  \mathrm{d}x
   \nonumber \\
   & \displaystyle \quad +\int _{\mathbb{R}_{+}}(( p'(\bar{\rho})\varphi _{x})_{xx}+ \mathcal{F}_{x}) _{t}\mathcal{L}_{1} \mathrm{d}x=\mathcal{I}_{3,1}+\mathcal{I}_{3,2}+\mathcal{I}_{3,3}.
  \end{align*}
  Next, we estimate $ \mathcal{I}_{3,i} $. Recalling \eqref{f-1-x-com} and \eqref{p-xxx-compu}, we have
  \begin{align}
  \displaystyle (( p'(\bar{\rho})\varphi _{x})_{xx}+ \mathcal{F}_{x}) _{t}& =p'(\varphi _{x}+\bar{\rho})\varphi _{xxxt}+\left[ (( p'(\bar{\rho})\varphi _{x})_{xx}+ \mathcal{F}_{x}) _{t} -p'(\varphi _{x}+\bar{\rho})\varphi _{xxxt}\right]
   \nonumber \\
   &\displaystyle=  p'(\varphi _{x}+\bar{\rho})\varphi _{xxxt}+\mathcal{L}_{2},
    \nonumber
  \end{align}
  where, in view of \eqref{p-condition}, \eqref{con-stat-decay},  \eqref{samlness-sumup} and \eqref{bar-rho-bounds}, the following inequality holds
  \begin{align*}
   \displaystyle \left\vert \mathcal{L}_{2}\right\vert \leq C(\varepsilon+\delta)(\left\vert \varphi _{xxt}\right\vert +\left\vert \varphi _{xxx}\right\vert)+C \left\vert \Phi _{xxt}\right\vert+C (\left\vert \varphi _{xt}\right\vert+\left\vert \Phi _{xt}\right\vert).
   \end{align*}
   We thus have
   \begin{align}
   \displaystyle \mathcal{I}_{3,1} &=\int _{\mathbb{R}_{+}}\frac{ 2 \varphi _{t} \varphi _{txx}}{\varphi _{x}+\bar{\rho}}\left(  p'(\varphi _{x}+\bar{\rho})\varphi _{xxxt}+\mathcal{L}_{2} \right)    \mathrm{d}x
    \nonumber \\
    & \displaystyle \leq- \int _{\mathbb{R}_{+}}\varphi _{xxt}^{2}\left( \frac{ p'(\varphi _{x}+\bar{\rho})\varphi _{t}}{\varphi _{x}+\bar{\rho}} \right)_{x}  \mathrm{d}x+C \|\varphi _{t}\|_{L ^{\infty}}\|\varphi _{xxt}\|\|\mathcal{L}_{2}\|
     \nonumber \\
     &\displaystyle  \leq \int _{\mathbb{R}_{+}}\varphi _{xxt}^{2}\left( \left\vert \varphi _{xt}\right\vert +\left\vert \varphi _{t}(\varphi _{xx}+\bar{\rho}_{x})\right\vert\right)  \mathrm{d}x +C \varepsilon \|\varphi _{xxt}\|\|\mathcal{L}_{2}\|
      \nonumber \\
           &\displaystyle \leq C (\delta+ \varepsilon)\|(\varphi _{xxt},\varphi _{xxx},\varphi _{tx})\|^{2}+C \varepsilon \|\Phi _{xt}\|_{1}^{2} \nonumber
   \end{align}
   and
   \begin{align}
   \displaystyle  \mathcal{I}_{3,2}&=-\int _{\mathbb{R}_{+}}\frac{\varphi _{t}^{2}(\varphi _{xxx}+\bar{\rho}_{xx})}{(\varphi _{x}+\bar{\rho})^{2}}\left(  p'(\varphi _{x}+\bar{\rho})\varphi _{xxxt}+\mathcal{L}_{2}  \right)  \mathrm{d}x
    \nonumber \\
    & \displaystyle = -\frac{1}{2}\frac{\mathrm{d}}{\mathrm{d}t}\int _{\mathbb{R}_{+}}\frac{p'(\varphi _{x}+\bar{\rho})\varphi _{t}^{2}(\varphi _{xxx}+\bar{\rho}_{xx})^{2}}{(\varphi _{x}+\bar{\rho})^{2}} \mathrm{d}x
    + C (\varepsilon+\delta) \left( \|\varphi _{xxx}\|+\|\varphi _{t}\| \right) \|\mathcal{L}_{2}\|
     \nonumber \\
     &\displaystyle \quad+\frac{1}{2}\int _{\mathbb{R}_{+}}(\varphi _{xxx}+\bar{\rho}_{xx})^{2}\left( \frac{p''(\varphi _{x}+\bar{\rho})\varphi _{xt}\varphi _{t}^{2}}{(\varphi _{x}+\bar{\rho})^{2}}+ \frac{2p'(\varphi _{x}+\bar{\rho})\varphi _{t}\varphi _{tt}}{(\varphi _{x}+\bar{\rho})^{2}}-2  \frac{p'(\varphi _{x}+\bar{\rho})\varphi _{xt}\varphi _{t}^{2}}{(\varphi _{x}+\bar{\rho})^{3}} \right) \mathrm{d}x
     \nonumber \\
     & \displaystyle =-\frac{1}{2}\frac{\mathrm{d}}{\mathrm{d}t}\int _{\mathbb{R}_{+}}\frac{p'(\varphi _{x}+\bar{\rho})\varphi _{t}^{2}(\varphi _{xxx}+\bar{\rho}_{xx})^{2}}{(\varphi _{x}+\bar{\rho})^{2}} \mathrm{d}x+C (\varepsilon+\delta) \left( \|\varphi _{t}\| _{1}^{2}+\|\Phi _{xt}\|_{1}^{2}+\|\varphi _{xxx}\|^{2} \right), \nonumber
   \end{align}
   where we have used \eqref{p-condition}, \eqref{con-stat-decay}, \eqref{samlness-sumup}, \eqref{rho-lower-bd}, \eqref{varphi-tt}, Cauchy-Schwarz inequality and the integration by parts. For $ \mathcal{I}_{3,3} $, the integration by parts leads to
   \begin{align}
   \displaystyle \mathcal{I}_{3,3}=-\int _{\mathbb{R}_{+}}(( p'(\bar{\rho})\varphi _{x})_{x}+ \mathcal{F}) _{t}\partial _{x}\mathcal{L}_{1} \mathrm{d}x,
    \nonumber
   \end{align}
   which combined with \eqref{F-defi-simple}, \eqref{F-1-2}, \eqref{samlness-sumup}, \eqref{pa-x-L1} and Cauchy-Schwarz inequality implies that
   \begin{align}
   \displaystyle  \mathcal{I}_{3,3}& \leq C \int _{\mathbb{R}_{+}}\left( \left\vert \varphi _{xt}\right\vert +\left\vert \Phi _{xt}\right\vert\right)\Big [ C (\varepsilon+\delta)(\left\vert \varphi _{xxt}\right\vert+\left\vert \varphi _{t}\right\vert +\left\vert \varphi _{xt}\right\vert )+C \varepsilon \left\vert \varphi _{xxx}\right\vert\Big]    \mathrm{d}x
    \nonumber \\
    &\leq \displaystyle C (\varepsilon+\delta)\|(\varphi _{xxx},\varphi _{xxt})\| ^{2}+C \left( \|\varphi _{t}\|_{1}^{2}+\|\Phi _{xt}\|^{2} \right).  \nonumber
   \end{align}
   Hence, we have
   \begin{align}
   \displaystyle \mathcal{I}_{3}&\leq  -\frac{1}{2}\frac{\mathrm{d}}{\mathrm{d}t}\int _{\mathbb{R}_{+}}\frac{p'(\varphi _{x}+\bar{\rho})\varphi _{t}^{2}(\varphi _{xxx}+\bar{\rho}_{xx})^{2}}{(\varphi _{x}+\bar{\rho})^{2}} \mathrm{d}x
    \nonumber \\
    &\displaystyle \quad+C (\varepsilon+\delta)\|(\varphi _{xxx},\varphi _{xxt},\Phi _{xxt})\| ^{2}+C \left( \|\varphi _{t}\|_{1}^{2}+\|\Phi _{xt}\|^{2} \right). \nonumber
   \end{align}
   Plugging $ \mathcal{I}_{i}~(i=1,2,3) $ into \eqref{g-2-eq-refor}, we now reach
\begin{align}\label{vfixxx-sumup}
 & \displaystyle \displaystyle \frac{\mathrm{d}}{\mathrm{d}t }\mathcal{G}_{2}(t)+ \alpha \int _{\mathbb{R}_{+}}p'(\bar{\rho})\varphi _{xxt}^{2} \mathrm{d}x+\frac{1}{2}\frac{\mathrm{d}}{\mathrm{d}t}\int _{\mathbb{R}_{+}}\frac{p'(\varphi _{x}+\bar{\rho})\varphi _{t}^{2}(\varphi _{xxx}+\bar{\rho}_{xx})^{2}}{(\varphi _{x}+\bar{\rho})^{2}} \mathrm{d}x
  \nonumber \\
  & \displaystyle =\frac{\mathrm{d}}{\mathrm{d}t}\int _{\mathbb{R}_{+}}[p'(\varphi _{x}+\bar{\rho})-p'(\bar{\rho})]\Big( \frac{1}{2}\varphi _{xxt}^{2}+\varphi _{xx}\varphi _{xxt}\Big)  \mathrm{d}x +C (\delta+ \varepsilon)\|(\varphi _{xxx},\varphi _{xxt},\Phi _{xxt})\|^{2}
      \nonumber \\
  & \displaystyle \quad+  \eta \|\varphi _{xxt}\|^{2}+ C _{ \eta} \|\Phi _{xtt}\|^{2}+ C \|(\varphi _{x},\varphi _{t})\|_{1}^{2}+C \|(\Phi _{xx},\Phi _{xt})\|^{2}
   \end{align}
   for any $  \eta>0 $. Consequently, thanks to \eqref{p-condition}, \eqref{samlness-sumup}--\eqref{rho-lower-bd}, \eqref{G-2bd} and the mean value theorem, we obtain \eqref{con-vfi-xxx} after integrating \eqref{vfixxx-sumup} over $ (0,t) $ and taking $ \delta $, $ \varepsilon$ and $ \eta $ small enough. The proof of Lemma \ref{lem-vfi-xxx} is complete.
\end{proof}
In the next lemma, we shall estimate the higher-order terms on the right-hand side of \eqref{con-vfi-xxx}.
 \begin{lemma}\label{lem-h-multi}
Let the assumptions in Proposition \ref{prop-key} hold. If $\varepsilon $ and $ \delta$ are sufficiently small, then the solution $ (\varphi,\Phi) $ of \eqref{problem-linearized-simple-a}--\eqref{problem-linearized-simple-e} satisfies
\begin{align}\label{con-h-multi-inte}
 & \displaystyle  \|(\Phi _{xt},\Phi _{tt},\Phi _{xxx})\|^{2}+\int _{0}^{t}\|\Phi _{xx \tau}\|^{2}\mathrm{d}\tau
   + \int _{0}^{t}\|(\Phi _{\tau \tau},\Phi _{x \tau}, \Phi _{x \tau \tau},\Phi _{xx \tau})\|^{2}\mathrm{d}\tau
    \nonumber \\
    &~\displaystyle\leq C(\|\Phi _{0}\|_{4}^{2}+\|\varphi _{0x}\|_{2}^{2}+\|\psi _{0x}\|^{2})+C\|(\varphi _{xx},\Phi _{x})\|^{2}+
    C  \int _{0}^{t}(\|(\varphi _{\tau},\varphi _{x})\|_{1}^{2}+ \|\Phi _{xx}\|^{2})\mathrm{d}\tau
   \nonumber \\
   & \displaystyle \quad+ C\int _{0}^{t}\left(  \varepsilon\|(\varphi _{xx \tau},\varphi _{xxx})\|^{2}+ \|\varphi _{xx \tau}\|\|\Phi  _{x \tau}\|\right)  \mathrm{d}\tau,
  \end{align}
    and that
 \begin{align}\label{con-fida-xtt}
 \displaystyle  \int _{0}^{t}\int _{\mathbb{R}_{+}}\varphi _{xxx}^{2} \mathrm{d}x \mathrm{d}\tau
   &\displaystyle \leq C \|(\varphi _{x},\varphi _{xt},\Phi _{x})\|_{1}^{2}+ C \left( \|\varphi _{0x}\|_{2}^{2}+\|\psi _{0x}\|^{2} \right)
 \nonumber \\
 & \displaystyle \quad +
 C(\delta+\varepsilon) \int _{0}^{t}\|\Phi _{xx \tau}\|^{2}\mathrm{d}\tau
  +C \int _{0}^{t}\left( \|\varphi _{\tau}\|_{2}^{2}+ \|(\Phi _{x \tau},\Phi _{xx})\|^{2}+\left\|\varphi _{x}\right\|_{1}^{2} \right)\mathrm{d}\tau.
 \end{align}

  \end{lemma}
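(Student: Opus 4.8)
I will prove the two bounds separately by energy estimates, in each case exploiting a different dissipative mechanism: for \eqref{con-h-multi-inte} the uniform parabolic smoothing in the $\Phi$-equation \eqref{problem-linearized-simple-c}, and for \eqref{con-fida-xtt} the damping $\alpha\varphi_{t}$ together with the uniformly elliptic spatial operator $-(p'(\bar\rho)\varphi_{x})_{x}$ in \eqref{problem-linearized-simple-a}. Both computations repeatedly invoke the boundary relations $\Phi|_{x=0}=\Phi_{t}|_{x=0}=\Phi_{tt}|_{x=0}=0$ and $\big((p'(\bar\rho)\varphi_{x})_{x}+\mathcal{F}\big)_{t}|_{x=0}=0$ from \eqref{boundary-simple-case}, the relation $\Phi_{xx}|_{x=0}=-a\varphi_{x}$ (hence $\Phi_{xxt}|_{x=0}=-a\varphi_{xt}$) from \eqref{fida-bdcon-xx}, and the trace inequality $\|f\|_{L^{\infty}}\le C\|f\|^{1/2}\|f_{x}\|^{1/2}$ to dispose of the (genuinely nonzero) boundary traces at $x=0$ that integration by parts in $x$ generates.

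For \eqref{con-h-multi-inte} I differentiate \eqref{problem-linearized-simple-c} in $t$, in $(x,t)$, and once more in $t$, and run the standard hierarchy of parabolic multipliers: multiply the $\Phi_{t}$-equation $\Phi_{tt}-\Phi_{xxt}+b\Phi_{t}=a\varphi_{xt}$ by $\Phi_{t}$, by $\Phi_{tt}$, and by $-\Phi_{xxt}$; multiply the $\Phi_{xt}$-equation $\Phi_{xtt}-\Phi_{xxxt}+b\Phi_{xt}=a\varphi_{xxt}$ by $-\Phi_{xxxt}$; and multiply the $\Phi_{tt}$-equation by $\Phi_{tt}$. Each step, after integrating over $\mathbb{R}_{+}$ and using the boundary data above, yields a $\tfrac{\mathrm{d}}{\mathrm{d}t}$-controlled functional plus the matching dissipation $\int_{0}^{t}\|\Phi_{x\tau}\|^{2}$, $\int_{0}^{t}\|\Phi_{\tau\tau}\|^{2}$, $\int_{0}^{t}\|\Phi_{xx\tau}\|^{2}$, $\int_{0}^{t}\|\Phi_{xxx\tau}\|^{2}$; the forcing $a\varphi_{x}$ and its $x$-derivatives bring in $\int_{0}^{t}\|(\varphi_{x},\varphi_{\tau})\|_{1}^{2}$ and $\int_{0}^{t}\|\Phi_{xx}\|^{2}$, while the top-order forcing $a\varphi_{xxt}$—tested against $\Phi_{xt}$ by Cauchy--Schwarz and against a portion of $\Phi_{xxt}$ (the remainder absorbed into the dissipation with a small $\eta$)—is exactly what creates the cross terms $\int_{0}^{t}\varepsilon\|(\varphi_{xx\tau},\varphi_{xxx})\|^{2}$ and $\int_{0}^{t}\|\varphi_{xx\tau}\|\|\Phi_{x\tau}\|$. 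The initial-data terms $\|\Phi_{0}\|_{4}^{2}+\|\varphi_{0x}\|_{2}^{2}+\|\psi_{0x}\|^{2}$ come from the functionals at $\tau=0$ once $\Phi_{t0},\Phi_{tt0}$ are expressed through \eqref{fida-t-initial}--\eqref{fida-tt-intial}. Finally the top-order norms on the left of \eqref{con-h-multi-inte} that are not yet produced are recovered algebraically from the equations: $\Phi_{xxx}=\Phi_{xt}-a\varphi_{xx}+b\Phi_{x}$ and $\Phi_{xtt}=\Phi_{xxxt}+a\varphi_{xxt}-b\Phi_{xt}$ from \eqref{Eq-x-simple-b}, and $\Phi_{tt}=\Phi_{xxt}+a\varphi_{xt}-b\Phi_{t}$ from the $t$-differentiated equation (the remaining $\varphi_{xt}$, $\varphi_{tt}$ appearing here being lower order, already controlled by Lemmas \ref{lem-energy-lever}--\ref{lem-fip-xx}).

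For \eqref{con-fida-xtt} the point is to manufacture $\int_{0}^{t}\|\varphi_{xxx}\|^{2}$ out of the elliptic operator in \eqref{Eq-x-simple-a}. Using the decomposition $(p'(\bar\rho)\varphi_{x})_{xx}=p'(\bar\rho)\varphi_{xxx}+(\text{l.o.t.})$ of \eqref{p-xxx-compu}--\eqref{p-xxx-bd}, I multiply \eqref{Eq-x-simple-a} by $(p'(\bar\rho)\varphi_{x})_{xx}+\mathcal{F}_{x}$ (equivalently, by $\varphi_{xtt}+\alpha\varphi_{xt}-\mathcal{H}_{x}$ via the equation) and integrate over $\mathbb{R}_{+}\times(0,t)$; the right-hand side then equals $\int_{0}^{t}\|(p'(\bar\rho)\varphi_{x})_{xx}+\mathcal{F}_{x}\|^{2}$, which by \eqref{vfi-xxx-1} dominates a fixed positive multiple of $\int_{0}^{t}\|\varphi_{xxx}\|^{2}$ up to $\int_{0}^{t}\|(\varphi_{x},\varphi_{xx},\Phi_{xx})\|^{2}$. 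On the left, integration by parts in $x$ turns the $\alpha\varphi_{xt}$ contribution into $\tfrac{\mathrm{d}}{\mathrm{d}\tau}$ of a quadratic form in $(\varphi_{x},\varphi_{xx})$, and integration by parts in $\tau$ turns the $\varphi_{x\tau\tau}$ contribution into $-\int_{0}^{t}\|\varphi_{xx\tau}\|^{2}$ (moved to the right and absorbed in $\int_{0}^{t}\|\varphi_{\tau}\|_{2}^{2}$) plus boundary-in-time pieces; these time-boundary pieces at $\tau=t$ and $\tau=0$, after a further integration by parts in $x$, reduce to $\|(\varphi_{x},\varphi_{xt},\Phi_{x})\|_{1}^{2}$ and to $\|\varphi_{0x}\|_{2}^{2}+\|\psi_{0x}\|^{2}$ (using $\varphi_{t}|_{t=0}=-\psi_{0}$), with the $x=0$ traces again handled by the trace inequality (alternatively one invokes \eqref{con-vfi-xxx} to bound any $\|\varphi_{xxx}(t)\|$ that surfaces). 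The forcing $\mathcal{H}_{x}$ is estimated through \eqref{cal-h-x-bd} and $\mathcal{F}_{x}$ through \eqref{F-x-bdd}: their $\varphi_{xxx}$- and $\varphi_{xxt}$-proportional parts carry the small factor $\varepsilon$ and are absorbed; the $\bar\rho\Phi_{xx}$-part of $\mathcal{F}_{x}$, paired against $\varphi_{x\tau\tau}$ and integrated by parts in $\tau$, yields the $C(\delta+\varepsilon)\int_{0}^{t}\|\Phi_{xx\tau}\|^{2}$ term; and the rest collapses to the lower-order $\varphi$- and $\Phi$-quantities on the right of \eqref{con-fida-xtt}.

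The main obstacle is the two-way coupling: the hyperbolic inequality for $\varphi_{xxx}$ needs $\int_{0}^{t}\|\varphi_{xx\tau}\|^{2}$ and $\int_{0}^{t}\|\Phi_{xx\tau}\|^{2}$, whereas the parabolic third-order estimate needs $\int_{0}^{t}\|(\varphi_{xx\tau},\varphi_{xxx})\|^{2}$, so at every step one must know whether a given top-order norm is genuinely produced (good sign, $O(1)$ coefficient) or merely absorbed (coefficient $\varepsilon$ or $\delta+\varepsilon$, or deferred). The safe route is to keep the troublesome cross terms as the literal products $\|\varphi_{xx\tau}\|\|\Phi_{x\tau}\|$ and $\|\varphi_{x}\|\|\Phi_{xx\tau}\|$ instead of splitting them by Young's inequality, and to close the loop only after Lemmas \ref{lem-vfi-xxx}, \ref{lem-h-multi} (and the subsequent higher-order estimates) are combined with $\varepsilon$ and $\delta$ taken small; the secondary, constantly recurring annoyance is the nonvanishing boundary traces $\varphi_{xt}(0,t)$, $\Phi_{xt}(0,t)$, which is precisely where the induced relation $\Phi_{xx}|_{x=0}=-a\varphi_{x}$ (and its $t$-derivatives) together with $\|f\|_{L^{\infty}}\le C\|f\|^{1/2}\|f_{x}\|^{1/2}$ and the $\eta$/$C_{\eta}$ splitting come into play.
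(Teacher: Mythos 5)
Your treatment of the first half of the lemma (the estimates on $\Phi_{xt}$, $\Phi_{tt}$, $\Phi_{xxx}$) matches the paper's Step~1 essentially term for term: differentiate \eqref{problem-linearized-simple-c} in $t$ and test against $\Phi_{xt}$, differentiate twice in $t$ and test against $\Phi_{tt}$, disposing of the $x=0$ traces via $\Phi_{xxt}|_{x=0}=\pm a\varphi_{xt}$ and $\|f\|_{L^\infty}\le C\|f\|^{1/2}\|f_x\|^{1/2}$; the only detail you gloss over is that, in the $\Phi_{tt}$--test, the forcing $a\int\varphi_{xtt}\Phi_{tt}$ must first be rewritten through the equation \eqref{vfi-xtt-iden} (so that the $\varepsilon$-weighted $\|(\varphi_{xx\tau},\varphi_{xxx})\|^2$ terms can be peeled off from $\mathcal{F}_x,\mathcal{H}_x$ via \eqref{F-x-bdd}, \eqref{cal-h-x-bd}) -- that is where the $\varepsilon$ in \eqref{con-h-multi-inte} actually originates, not from splitting $a\varphi_{xxt}$ off against $\Phi_{xxt}$.

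For the $\varphi_{xxx}$-estimate your route is genuinely different from the paper's, and it would work, but for a reason you do not state. The paper multiplies \eqref{Eq-x-simple-a} by the \emph{linear, frozen-coefficient} quantity $-\bigl(\varphi_{xxx}-\tfrac{\mu\rho_-}{p'(\rho_-)}\Phi_{xx}\bigr)$, chosen so that its primitive $\varphi_{xx}-\tfrac{\mu\rho_-}{p'(\rho_-)}\Phi_x$ has an $O(\varepsilon+\delta)$ trace at $x=0$ (relation \eqref{hig-bd-one}, which itself is a consequence of $((p'(\bar\rho)\varphi_x)_x+\mathcal{F})|_{x=0}=0$); that smallness is exactly what makes $\mathcal{P}_4,\mathcal{P}_5$ absorbable and produces the $C(\delta+\varepsilon)\int_0^t\|\Phi_{xx\tau}\|^2$ entry. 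You instead propose multiplying by $(p'(\bar\rho)\varphi_x)_{xx}+\mathcal{F}_x=\partial_x\bigl((p'(\bar\rho)\varphi_x)_x+\mathcal{F}\bigr)$; since the \emph{untouched} quantity and its time derivative both vanish at $x=0$ by \eqref{boundary-simple-case}, a single $x$-integration by parts on every term of the form $\int\varphi_{x\tau\text{(or }xt\tau\text{)}}\,\partial_x(\cdot)$ makes all $x=0$ contributions disappear identically -- you never need the trace inequality or the $\eta$/$C_\eta$ splitting here at all, contrary to what you write. The cost of that cleaner boundary behaviour is that the multiplier is nonlinear in $\varphi_x$, so its $t$-derivative includes the nonlinear commutators already seen in \eqref{g-2-1}; these are manageable by the same $\varepsilon$-extraction as in Lemma \ref{lem-vfi-xxx}. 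Two inaccuracies in your narration: (i) your proposed route, done correctly, does \emph{not} produce a $C(\delta+\varepsilon)\int_0^t\|\Phi_{xx\tau}\|^2$ term -- the $\bar\rho\Phi_x$ part of $\mathcal{F}$ only contributes $\bar\rho\Phi_{x\tau}$ after the $x$-IBP, hence only $\int_0^t\|\varphi_{xx\tau}\|\|\Phi_{x\tau}\|$, a strictly better term already present on the right of \eqref{con-fida-xtt}; and (ii) the identity you quote, "the right-hand side equals $\int_0^t\|M\|^2$", is only true after moving $\varphi_{xtt}M+\alpha\varphi_{xt}M$ to one side and keeping the $\mathcal{H}_xM$ term separate -- as written it reads as if the full inhomogeneity collapsed to $M^2$, which it does not. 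Neither of these is fatal, but they show you have not actually tracked where each displayed term comes from; be especially careful that you never estimate $\int_0^t\int\varphi_{xtt}M$ by brute Cauchy--Schwarz, since $\|\varphi_{xtt}\|$ controlled via \eqref{vfi-xtt-l2} already contains $\|\varphi_{xxx}\|$ and would make the argument circular.
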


\begin{proof}We divide the proof into two steps.

\vspace*{4mm}
\noindent{\bf Step 1:} \emph{Estimates on $ (\Phi _{xt},\Phi _{xxt},\Phi _{xtt}) $}. Differentiating \eqref{Eq-x-simple-b} with respect to $ t $, we have
 \begin{gather}\label{eq-fida-xt}
  \displaystyle \Phi _{xtt}=\Phi _{xxxt}+ a \varphi _{xxt}-b \Phi _{xt}.
  \end{gather}
  Multiplying \eqref{eq-fida-xt} by $ \Phi _{xt} $ and integrating the resulting equation over $ \mathbb{R}_{+} $, we get by the H\"older's inequality that
  \begin{align}\label{fida-xt-sigl-esti}
  &\displaystyle \frac{1}{2}\frac{\mathrm{d}}{\mathrm{d}t}\int _{\mathbb{R}_{+}}\Phi _{xt}^{2} \mathrm{d}x +b \int _{\mathbb{R}_{+}}\Phi _{xt}^{2} \mathrm{d}x+ \int _{\mathbb{R}_{+}}\Phi _{xxt}^{2} \mathrm{d}x
   \nonumber \\
   \displaystyle&~ =- \left. \Phi _{xxt}\Phi _{xt}\right \vert _{x=0}+ a\int _{\mathbb{R}_{+}}\varphi _{xxt}\Phi _{xt} \mathrm{d}x
   \nonumber \\
   \displaystyle&~\displaystyle \leq - \left. \Phi _{xxt}\Phi _{xt}\right \vert _{x=0}+C \|\Phi _{xt}\|\|\varphi _{xxt}\|.
  \end{align}
  In view of \eqref{fida-bdcon-xx}, for smooth solution $ (\varphi,\Phi) $, we have $ \Phi _{xxt}=a \varphi _{xt} $ at $ x=0 $. Then it holds that
  \begin{align}\label{fida-xxt-bdesti}
  \displaystyle  - \left. \Phi _{xxt}\Phi _{xt}\right \vert _{x=0}& \leq \|\varphi _{xt}\|_{L ^{\infty}}\|\Phi _{xt}\|_{L ^{\infty}}
   \nonumber \\
   & \displaystyle \leq C \|\varphi _{xt}\|^{\frac{1}{2}}\|\varphi _{xxt}\|^{\frac{1}{2}}\|\Phi _{xt}\|^{\frac{1}{2}}\|\Phi _{xxt}\|^{\frac{1}{2}}
    \nonumber \\
    & \displaystyle \leq C \|\varphi _{xxt}\|^{\frac{1}{2}}\|\Phi _{xt}\|^{\frac{1}{2}}\left( \|\varphi _{xt}\|+\left\|\Phi _{xxt}\right\| \right)
     \nonumber \\
     &\displaystyle \leq \eta  \|\Phi _{xxt}\|^{2}+C _{\eta } \|\Phi _{xt}\|\|\varphi _{xxt}\|  + C \|\varphi _{xt}\|^{2}
  \end{align}
  for any $ \eta >0 $, where the Sobolev inequality $ \|f\|_{L ^{\infty}}\leq C \|f\|^{\frac{1}{2}}\|f _{x}\|^{\frac{1}{2}} $ and Cauchy-Schwarz inequality have been used. Plugging \eqref{fida-xxt-bdesti} into \eqref{fida-xt-sigl-esti}, we get after taking $ \eta $ suitably small that
  \begin{align}\label{fida-xt-single}
&\displaystyle
\|\Phi _{xt}\|^{2}+\int _{0}^{t}\|(\Phi _{xx \tau}, \Phi _{x \tau})\|^{2}\mathrm{d}\tau
 \nonumber \\
  &~\displaystyle\leq C \int _{0}^{t}\left( \|\varphi _{xx \tau}\|\|\Phi  _{x \tau}\|+\|\varphi_{x \tau}\|^{2}\right) \mathrm{d}\tau+C(\|\Phi _{0}\|_{3}^{2}+\|\varphi _{0xx}\|^{2}),
\end{align}
where we have used \eqref{fida-t-initial}. By \eqref{Eq-x-simple-b} and \eqref{fida-xt-single}, we have
  \begin{align}\label{fida-xxx}
  \displaystyle  \|\Phi _{xxx}\|^{2} &\leq  C \|\Phi _{xt}\|^{2}+ C\|(\varphi _{xx},\Phi _{x})\|^{2}
   \nonumber \\
   & \displaystyle \leq  C \int _{0}^{t}\left( \|\varphi _{xx \tau}\|\|\Phi  _{x \tau}\|+\|\varphi_{x \tau}\|^{2}\right) \mathrm{d}\tau+C\|(\varphi _{xx},\Phi _{x})\|^{2} +C(\|\Phi _{0}\|_{3}^{2}+\|\varphi _{0xx}\|^{2}).
  \end{align}
  Differentiating in \eqref{problem-linearized-simple-c} with respect to $ t $ twice gives
  \begin{gather}\label{EQ-fida-tt}
   \displaystyle \Phi _{ttt}=\Phi _{xxtt}+a \varphi _{xtt}-b \Phi _{tt}.
   \end{gather}
   Multiplying \eqref{EQ-fida-tt} by $ \Phi _{tt} $, we have
\begin{align}\label{esti-fidp-tt}
\displaystyle \frac{1}{2}\frac{\mathrm{d}}{\mathrm{d}t}\int _{\mathbb{R}_{+}}\Phi _{tt}^{2} \mathrm{d}x + b\int _{\mathbb{R}_{+}}\Phi _{tt}^{2} \mathrm{d}x+ \int _{\mathbb{R}_{+}}\Phi _{xtt}^{2} \mathrm{d}x =  a \int _{\mathbb{R}_{+}}\varphi _{xtt}\Phi _{tt} \mathrm{d}x.
  \end{align}
Thanks to \eqref{p-condition}, \eqref{con-stat-decay}, \eqref{bar-rho-bounds},  \eqref{F-x-bdd}, \eqref{p-xxx-bd}, \eqref{vfi-xtt-iden}, \eqref{cal-h-x-bd}, Cauchy-Schwarz inequality and integration by parts, one has
  \begin{align}\label{vfi-tt-error}
  &\displaystyle   a \int _{\mathbb{R}_{+}}\varphi _{xtt}\Phi _{tt} \mathrm{d}x
   \nonumber \\
    &~\displaystyle = a \int _{\mathbb{R}_{+}}p'(\bar{\rho})\varphi _{xxx}\Phi _{tt} \mathrm{d}x+a\int _{\mathbb{R}_{+}}\left[ \left( \left( p'(\bar{\rho})\varphi _{x} \right)_{xx}-p'(\bar{\rho})\varphi _{xxx}\right) -\varphi _{xt} +\mathcal{F}_{x}+\mathcal{H}_{x}\right] \Phi _{tt}\mathrm{d}x
   \nonumber \\
   &\displaystyle~\leq -a \int _{\mathbb{R}_{+}}p'(\bar{\rho})\varphi _{xx}\Phi _{xtt} \mathrm{d}x-a \int _{\mathbb{R}_{+}}p''(\bar{\rho})\bar{\rho}_{x}\varphi _{xx}\Phi _{tt} \mathrm{d}x+ C\|\varphi _{xt}\|\|\Phi _{tt}\|
    \nonumber \\
    &~\displaystyle \quad+C\|(\mathcal{F}_{x},\mathcal{H}_{x})\|\|\Phi _{tt}\|+C \|\left( p'(\bar{\rho})\varphi _{x} \right)_{xx}-p'(\bar{\rho})\varphi _{xxx}\|\|\Phi _{tt}\|
   \nonumber \\
      &\displaystyle ~\leq C (\eta +\varepsilon+\delta) \|\Phi _{tt}\|^{2}+\eta  \|\Phi _{xtt}\|^{2}+C \varepsilon\|(\varphi _{xxt},\varphi _{xxx})\|^{2}+
    C _{\eta } (\|(\varphi _{t},\varphi _{x})\|_{1}^{2}+ \|\Phi _{xx}\|^{2})
  \end{align}
  for any $ \eta>0 $. Substituting \eqref{vfi-tt-error} into \eqref{esti-fidp-tt}, we get after taking $ \eta $, $ \varepsilon $, $ \delta $ small enough that
  \begin{align}\label{fida-xtt-multi}
  &\displaystyle  \|\Phi _{tt}\|^{2}+ \int _{0}^{t}\|(\Phi _{\tau \tau}, \Phi _{x \tau \tau})\|^{2}\mathrm{d}\tau
   \nonumber \\
   &~\displaystyle\leq C(\|\Phi _{0}\|_{4}^{2}+\|\varphi _{0x}\|_{2}^{2}+\|\psi _{0x}\|^{2})+C \varepsilon \int _{0}^{t}\|(\varphi _{xx \tau},\varphi _{xxx})\|^{2} \mathrm{d}\tau
   \nonumber \\
   & ~\displaystyle \quad+
    C  \int _{0}^{t}(\|(\varphi _{\tau},\varphi _{x})\|_{1}^{2}+ \|\Phi _{xx}\|^{2})\mathrm{d}\tau,
  \end{align}
  where we have used \eqref{fida-tt-intial}. Combining \eqref{fida-xtt-multi} with \eqref{fida-xt-single} and \eqref{fida-xxx}, then we get \eqref{con-h-multi-inte}.

  \vspace*{4mm}
\noindent  {\bf Step 2:}\emph{ Estimates on $ \varphi _{xxx} $}. Multiplying \eqref{Eq-x-simple-a} by $ -( \varphi _{xxx}- \frac{\mu \rho _{-}}{p'(\rho _{-})} \Phi _{xx} )$ with $ \rho _{-} $ as in \eqref{rho-fu}, and integrating the resulting equation over $ \mathbb{R}_{+}\times(0,t) $ for any $ t \in (0,T) $, we get
  \begin{align}\label{var-xx-simple}
  &\displaystyle  \int _{0}^{t}\int _{\mathbb{R}_{+}}p'(\bar{\rho})\varphi _{xxx}^{2} \mathrm{d}x \mathrm{d}\tau
   \nonumber \\
   &~\displaystyle  =- \int _{0}^{t}\int _{\mathbb{R}_{+}}\left(  (p ^{\prime}(\bar{\rho})\varphi _{x})_{xx} -p'(\bar{\rho})\varphi _{xxx}\right)\varphi _{xxx}  \mathrm{d}x \mathrm{d}\tau+\int _{0}^{t}\int _{\mathbb{R}_{+}} (p ^{\prime}(\bar{\rho})\varphi _{x})_{xx} \frac{\mu \rho _{-}}{p'(\rho _{-})} \Phi _{xx} \mathrm{d}x \mathrm{d}\tau
    \nonumber \\
     &\displaystyle \quad -\int _{\mathbb{R}_{+}}(\varphi _{xt}+\varphi _{x}) \left( \varphi _{xxx}- \frac{\mu \rho _{-} \Phi _{xx} }{p'(\rho _{-})}\right)(x,0) \mathrm{d}x+\int _{\mathbb{R}_{+}}(\varphi _{xt}+\varphi _{x}) \left( \varphi _{xxx}- \frac{\mu \rho _{-} \Phi _{xx} }{p'(\rho _{-})}\right) (x,t)\mathrm{d}x
      \nonumber \\
      &~\displaystyle \quad
       -  \int _{0}^{t}\int _{\mathbb{R}_{+}} (\varphi _{xt}+\alpha\varphi _{x})\left( \varphi _{xxx}- \frac{\mu \rho _{-}\Phi _{xx}}{p'(\rho _{-})}  \right)_{\tau}  \mathrm{d}x \mathrm{d}\tau - \int _{0}^{t}\int _{\mathbb{R}_{+}}(\mathcal{H}_{x}+\mathcal{F}_{x})\left(  \varphi _{xxx}- \frac{\mu \rho _{-}\Phi _{xx}}{p'(\rho _{-})}  \right)  \mathrm{d}x \mathrm{d}\tau
        \nonumber \\
        &\displaystyle =\sum _{i=1}^{6}\mathcal{P}_{i}.
  \end{align}
  Thanks to \eqref{p-condition} and \eqref{bar-rho-bounds}, one can find a constant $ C>0 $ such that
  \begin{align}
  \displaystyle C   \int _{0}^{t}\int _{\mathbb{R}_{+}}\varphi _{xxx}^{2} \mathrm{d}x \mathrm{d}\tau \leq  \int _{0}^{t}\int _{\mathbb{R}_{+}}p'(\bar{\rho})\varphi _{xxx}^{2} \mathrm{d}x \mathrm{d}\tau .\nonumber
  \end{align}
  Now let us estimates $ \mathcal{P}_{i}~(1 \leq i \leq 6) $ term by term. Recalling \eqref{p-xxx-bd}, it holds that
  \begin{align}
  \displaystyle \mathcal{P}_{1}\leq C \delta \int _{0}^{t}\int _{\mathbb{R}_{+}}(\left\vert \varphi _{x}\right\vert+\left\vert \varphi _{xx}\right\vert) \left\vert \varphi _{xxx}\right\vert\mathrm{d}x \mathrm{d} \tau\leq C \delta \int _{0}^{t}\|\varphi _{x}\|_{2}^{2}\mathrm{d}\tau,\nonumber
  \end{align}
  and that
  \begin{align}
  \displaystyle  \mathcal{P}_{2}&= \int _{0}^{t}\int _{\mathbb{R}_{+}}\left[  (p ^{\prime}(\bar{\rho})\varphi _{x})_{xx} -p'(\bar{\rho})\varphi _{xxx} +p'(\bar{\rho})\varphi _{xxx}\right] \frac{\mu \rho _{-}}{p'(\rho _{-})} \Phi _{xx} \mathrm{d}x \mathrm{d}\tau
   \nonumber \\
   & \leq \int _{0}^{t}\int _{\mathbb{R}_{+}} \left( \delta(\left\vert \varphi _{x}\right\vert+\left\vert \varphi _{xx}\right\vert) +\left\vert \varphi _{xxx}\right\vert \right)\left\vert \Phi _{xx}\right\vert \mathrm{d}x \mathrm{d}\tau
    \nonumber \\
       & \displaystyle \leq  \int _{0}^{t} \|\varphi _{xxx}\|\|\Phi _{xx}\|  \mathrm{d}\tau +C \delta \int _{0}^{t}\|(\varphi _{x},\varphi _{xx}, \Phi _{xx})\|^{2}  \mathrm{d}\tau
        \nonumber \\
        &\displaystyle \leq \eta   \int _{0}^{t}\|\varphi _{xxx}\|^{2}\mathrm{d}\tau+C _{ \eta } \int _{0}^{t}\|\Phi _{xx}\|^{2}\mathrm{d}\tau+C \delta  \int _{0}^{t}\|(\varphi _{x},\varphi _{xx})\|^{2}\mathrm{d}\tau\nonumber
  \end{align}
  for any $  \eta >0 $, where the Cauchy-Schwarz has been used. Again, by Cauchy-Schwarz inequality, it holds that
  \begin{align}
  \displaystyle \mathcal{P}_{3}& \leq C \int _{\mathbb{R}_{+}}\left(\left\vert \psi _{0x}\right\vert+ \left\vert \varphi _{0x}\right\vert \right) \left(\left\vert \varphi _{0xxx}\right\vert+\left\vert \Phi _{0xx}\right \vert\right)\mathrm{d}x
   \nonumber \\
   &\displaystyle
  \leq C \left( \|\varphi _{0x}\|_{2}^{2}+\|(\psi _{0x},\Phi _{0xx})\|^{2} \right).\nonumber
  \end{align}
 From the boundary conditions in \eqref{boundary-simple-case}, we get
 \begin{align*}
 \displaystyle \left. \left( \varphi _{xx}-\frac{\mu \rho _{-}}{p'(\rho _{-})}\Phi _{x}  \right) \right \vert _{x=0}&= \left\{ \mu \bar{\rho}\left( \frac{1}{p'(\varphi _{x}+\bar{\rho})}- \frac{1}{p'(\bar{\rho})}+\frac{\varphi _{x}}{p'(\varphi _{x}+\bar{\rho})} \right)\Phi _{x}\right.
  \nonumber \\
   & \quad \left.\left. -\frac{1}{p'(\varphi _{x}+\bar{\rho})}\left( p'(\varphi _{x}+\bar{\rho})-p'(\bar{\rho}) \right)\bar{\rho}_{x}+ \frac{\varphi _{x}\bar{\phi}_{x}}{p'(\varphi _{x}+\bar{\rho})}  \right\}\right \vert _{x=0},
    \nonumber \\
       \displaystyle \left. \left( \varphi _{xx}-\frac{\mu \rho _{-}}{p'(\rho _{-})}\Phi _{x}  \right)_{t} \right \vert _{x=0}&= \left\{ \mu \bar{\rho}\left( \frac{1}{p'(\varphi _{x}+\bar{\rho})}- \frac{1}{p'(\bar{\rho})}+\frac{\varphi _{x}}{p'(\varphi _{x}+\bar{\rho})} \right)\Phi _{x}\right.
  \nonumber \\
   & \quad \left.\left. -\frac{1}{p'(\varphi _{x}+\bar{\rho})}\left( p'(\varphi _{x}+\bar{\rho})-p'(\bar{\rho}) \right)\bar{\rho}_{x}+ \frac{\varphi _{x}\bar{\phi}_{x}}{p'(\varphi _{x}+\bar{\rho})}  \right\}_{t}\right \vert _{x=0}.
     \end{align*}
  Thanks to \eqref{p-condition}, \eqref{con-stat-decay} and \eqref{samlness-sumup}--\eqref{rho-lower-bd}, we further have that
 \begin{align}\label{hig-bd-one}
 \begin{cases}
 	\displaystyle  \left\vert \left. \left( \varphi _{xx}-\frac{\mu \rho _{-}}{p'(\rho _{-})}\Phi _{x}  \right) \right \vert _{x=0}\right\vert &\leq C ( \varepsilon+ \delta )\left( \|\Phi _{x}\|_{L ^{\infty}}+\|\varphi _{x}\|_{L ^{\infty}} \right),
  \\[10pt]
   \displaystyle  \left\vert\left. \left( \varphi _{xx}-\frac{\mu \rho _{-}}{p'(\rho _{-})}\Phi _{x}  \right)_{t} \right \vert _{x=0}\right\vert &\leq C ( \varepsilon+ \delta )\left( \|\Phi _{xt}\|_{L ^{\infty}}+\|\varphi _{xt}\|_{L ^{\infty}} \right).
 \end{cases}
 \end{align}
By \eqref{hig-bd-one}, Cauchy-Schwarz inequality, the Sobolev inequality $ \left\|f\right\|_{L ^{\infty}} \leq C\|f\|_{1 } $ and the integration by parts, we have
 \begin{align}
 \mathcal{P}_{4}&=\displaystyle \int _{\mathbb{R}_{+}}(\varphi _{xt}+\varphi _{x})( \varphi _{xxx}- \frac{\mu \rho _{-}}{p'(\rho _{-})} \Phi _{xx} ) \mathrm{d}x
  \nonumber \\
   &\displaystyle =-\int _{\mathbb{R}_{+}}(\varphi _{xxt}+\varphi _{xx})( \varphi _{xx}- \frac{\mu \rho _{-}}{p'(\rho _{-})} \Phi _{x} ) \mathrm{d}x -\left. (\varphi _{xt}+\varphi _{x})\left( \varphi _{xx}- \frac{\mu \rho _{-}}{p'(\rho _{-})} \Phi _{x} \right)\right \vert _{x=0}
    \nonumber \\
    &\displaystyle \leq C \|\varphi _{xxt}\|\|(\varphi _{xx},\Phi _{x})\|+C \|(\varphi _{xx},\Phi _{x})\|^{2}+C(\varepsilon+\delta)\|\varphi _{xt}\|_{L ^{\infty}}\left(  \|\Phi _{x}\|_{L ^{\infty}}+\|\varphi _{x}\|_{L ^{\infty}}\right)
     \nonumber \\
     &\displaystyle \quad+C(\varepsilon+\delta)\|\varphi _{x}\|_{L ^{\infty}}\left(  \|\Phi _{x}\|_{L ^{\infty}}+\|\varphi _{x}\|_{L ^{\infty}}\right)
      \nonumber \\
      &\displaystyle \leq C \|(\varphi _{x},\varphi _{xt},\Phi _{x})\|_{1}^{2}\nonumber
 \end{align}
 and
\begin{align}
 \displaystyle \mathcal{P}_{5} &= \int _{0}^{t}\int _{\mathbb{R}_{+}} (\varphi _{xx \tau}+\alpha\varphi _{xx})\left( \varphi _{xx}- \frac{\mu \rho _{-}\Phi _{x}}{p'(\rho _{-})}  \right)_{\tau}  \mathrm{d}x \mathrm{d}\tau
  -\int _{0}^{t}\left. (\varphi _{x \tau}+\varphi _{x})\left( \varphi _{xx}- \frac{\mu \rho _{-}\Phi _{x}}{p'(\rho _{-})}  \right)_{\tau}\right \vert _{x=0} \mathrm{d}\tau
   \nonumber \\
   &\displaystyle \leq C \int _{0}^{t} \|(\varphi _{xx \tau},\varphi _{xx},\Phi _{x \tau})\|^{2}\mathrm{d}\tau +C(\varepsilon+\delta)\int _{0}^{t}(\|\varphi _{x \tau}\|_{L ^{\infty}}+\|\varphi _{x}\|_{L ^{\infty}})(\|\Phi _{x \tau}\|_{L ^{\infty}}+\|\varphi _{x \tau}\|_{L ^{\infty}})\mathrm{d}\tau
    \nonumber \\
       &\displaystyle \leq C(\varepsilon+\delta)\int _{0}^{t} \|\Phi _{xx \tau}\|^{2}\mathrm{d}\tau+C \int _{0}^{t}\left( \|(\varphi _{x}, \varphi _{x \tau})\|_{1}^{2}+\|\Phi _{x \tau}\|^{2} \right)  \mathrm{d}\tau\nonumber.
 \end{align}
For $ \mathcal{P}_{6} $, we utilize \eqref{F-x-bdd}, \eqref{cal-h-x-bd} and Cauchy-Schwarz inequality to get
\begin{align}
\displaystyle  \mathcal{P}_{6}
& \leq C \int _{0}^{t}\|(\mathcal{H}_{x},\mathcal{F}_{x})\|\|(\varphi _{xxx},\Phi _{xx})\|\mathrm{d}\tau
 \nonumber \\
 &\displaystyle\leq C(\varepsilon+\delta)\int _{0}^{t}\|(\varphi _{xxx},\varphi _{xx \tau})\|^{2} \mathrm{d}\tau+C\int _{0}^{t}\left( \|(\varphi _{\tau},\varphi _{x})\|_{1}^{2}+\|\Phi _{xx}\|^{2} \right)\mathrm{d}\tau.
  \nonumber
\end{align}
 Inserting the estimates for $ \mathcal{P} _{i}~(i=1,\cdots,6)$ into \eqref{var-xx-simple}, we get
 \begin{align}
 \displaystyle  \int _{0}^{t}\int _{\mathbb{R}_{+}}\varphi _{xxx}^{2} \mathrm{d}x \mathrm{d}\tau
   &\displaystyle \leq C \|(\varphi _{x},\varphi _{xt},\Phi _{x})\|_{1}^{2}+ C \left( \|\varphi _{0x}\|_{2}^{2}+\|\psi _{0x}\|^{2} \right)
 \nonumber \\
 & \displaystyle \quad +
 C(\delta+\varepsilon) \int _{0}^{t}\|\Phi _{xx \tau}\|^{2}\mathrm{d}\tau
  +C \int _{0}^{t}\left( \|\varphi _{\tau}\|_{2}^{2}+ \|(\Phi _{x \tau},\Phi _{xx})\|^{2}+\|\varphi _{x}\|_{1}^{2} \right)\mathrm{d}\tau,
   \nonumber
 \end{align}
 provided $ \varepsilon $ and $ \delta $ are suitably small. The proof of Lemma \ref{lem-h-multi} is complete.
 \end{proof}

 From Lemmas \ref{lem-vfi-xxx} and \ref{lem-h-multi}, we have the following higher-order estimates.
\begin{lemma}\label{lem-con-vfi-xxx}
Under the conditions of Proposition \ref{prop-key}, for any $ t \in (0,T) $, we have
\begin{align}\label{con-xxx-esti}
&\displaystyle  \|(\varphi _{xxx},\varphi _{xxt},\Phi _{xt},\Phi _{tt},\Phi _{xxx})\|^{2}+\int _{0}^{t}\left( \|(\varphi _{xx \tau},\varphi _{xxx})\|^{2}+\|(\varphi _{x \tau},\Phi _{\tau \tau})\|_{1}^{2} \right) \mathrm{d}\tau
 \nonumber \\
 &~\displaystyle \leq  C \left(\|\Phi _{0}\|_{4}^{2}+\|(\varphi _{0x},\psi _{0})\|_{2}^{2} \right) +C \|(\varphi _{x},\varphi _{t},\Phi _{x})\|_{1}^{2}+C  \int _{0}^{t}\left( \|(\varphi _{\tau},\varphi _{x})\|_{1}^{2}+ \|(\Phi _{x \tau},\Phi _{xx})\|^{2} \right) \mathrm{d}\tau,
\end{align}
provided $ \varepsilon $ and $ \delta $ are sufficiently small.
\end{lemma}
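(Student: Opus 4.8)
The proof of Lemma~\ref{lem-con-vfi-xxx} requires no new differential identities: it is obtained by forming a suitable linear combination of the three estimates \eqref{con-vfi-xxx}, \eqref{con-h-multi-inte} and \eqref{con-fida-xtt} already established in Lemmas~\ref{lem-vfi-xxx} and~\ref{lem-h-multi}. A plain sum does not suffice because each of these inequalities carries, on its right-hand side, a higher-order dissipative integral that appears with coefficient $1$ (not small) on the left-hand side of one of the other two; concretely $\int_0^t\|\varphi_{xxx}\|^2\mathrm{d}\tau$, $\int_0^t\|\varphi_{xx\tau}\|^2\mathrm{d}\tau$, $\int_0^t\|\Phi_{xx\tau}\|^2\mathrm{d}\tau$ and $\int_0^t\|\Phi_{x\tau\tau}\|^2\mathrm{d}\tau$ all play this double role. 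The plan is therefore to choose positive weights $\lambda_A,\lambda_B,\lambda_C$ and form the combination $\lambda_A\cdot\eqref{con-vfi-xxx}+\lambda_B\cdot\eqref{con-h-multi-inte}+\lambda_C\cdot\eqref{con-fida-xtt}$ so that every such cross term is absorbed into the left-hand side.

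I would first rewrite the product $\|\varphi_{xx\tau}\|\|\Phi_{x\tau}\|$ in \eqref{con-h-multi-inte} via Young's inequality as $\eta\|\varphi_{xx\tau}\|^2+C_\eta\|\Phi_{x\tau}\|^2$, the second term being harmless since it is already of the type retained in \eqref{con-xxx-esti}. The absorption requirements are then, schematically: (i) $\int_0^t\|\varphi_{xxx}\|^2\mathrm{d}\tau$, which enters the right-hand sides of \eqref{con-vfi-xxx} and \eqref{con-h-multi-inte} with coefficients $C\lambda_A(\delta+\varepsilon)$ and $C\lambda_B\varepsilon$, is dominated by $\lambda_C\int_0^t\|\varphi_{xxx}\|^2\mathrm{d}\tau$ from \eqref{con-fida-xtt}; (ii) $\int_0^t\|\varphi_{xx\tau}\|^2\mathrm{d}\tau$, which enters through \eqref{con-h-multi-inte} (coefficient $C\lambda_B(\varepsilon+\eta)$) and through the factor $\|\varphi_\tau\|_2^2$ in \eqref{con-fida-xtt} (coefficient $C\lambda_C$, since $\|\varphi_\tau\|_2^2$ genuinely contains $\|\varphi_{xx\tau}\|^2$), is absorbed by $\lambda_A\int_0^t\|\varphi_{xx\tau}\|^2\mathrm{d}\tau$ from \eqref{con-vfi-xxx}; (iii) $\int_0^t\|\Phi_{xx\tau}\|^2\mathrm{d}\tau$, entering the right-hand sides of \eqref{con-vfi-xxx} and \eqref{con-fida-xtt} with coefficient $C(\lambda_A+\lambda_C)(\delta+\varepsilon)$, is absorbed by the (twice-appearing) $\lambda_B\int_0^t\|\Phi_{xx\tau}\|^2\mathrm{d}\tau$ of \eqref{con-h-multi-inte}; (iv) $\int_0^t\|\Phi_{x\tau\tau}\|^2\mathrm{d}\tau$, entering the right-hand side of \eqref{con-vfi-xxx} with coefficient $C\lambda_A$, is absorbed by $\lambda_B\int_0^t\|\Phi_{x\tau\tau}\|^2\mathrm{d}\tau$ from \eqref{con-h-multi-inte}. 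These four conditions are mutually compatible provided the constants are chosen in the right order: fix $\lambda_A=1$, then take $\lambda_B$ a large fixed constant (to dominate $C\lambda_A$ in (iii)--(iv)), then $\lambda_C$ a small fixed constant (so that $C\lambda_C<\lambda_A$ in (ii)), and only afterwards take $\varepsilon,\delta,\eta$ small enough that $C\lambda_A(\delta+\varepsilon)+C\lambda_B\varepsilon<\lambda_C$, $C\lambda_B(\varepsilon+\eta)+C\lambda_C<\lambda_A$ and $C(\lambda_A+\lambda_C)(\delta+\varepsilon)<\lambda_B$. Taking $\lambda_C$ as a fixed small number, rather than relying merely on smallness of $\varepsilon+\delta$, is essential precisely because of the $C\lambda_C\int_0^t\|\varphi_{xx\tau}\|^2\mathrm{d}\tau$ contribution in (ii).

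After this absorption the left-hand side controls $\|(\varphi_{xxx},\varphi_{xxt})\|^2$, $\|(\Phi_{xt},\Phi_{tt},\Phi_{xxx})\|^2$ and $\int_0^t(\|(\varphi_{xx\tau},\varphi_{xxx})\|^2+\|(\Phi_{\tau\tau},\Phi_{x\tau},\Phi_{x\tau\tau},\Phi_{xx\tau})\|^2)\mathrm{d}\tau$; one then trivially pads the left-hand side with $\int_0^t\|\varphi_{x\tau}\|^2\mathrm{d}\tau$ (already present on the right, inside $\int_0^t\|\varphi_\tau\|_2^2\mathrm{d}\tau$) to form the combination $\int_0^t\|(\varphi_{x\tau},\Phi_{\tau\tau})\|_1^2\mathrm{d}\tau$ of \eqref{con-xxx-esti}, and estimates the initial-data factors by $\|\varphi_{0x}\|_2+\|\psi_{0x}\|\leq C\|(\varphi_{0x},\psi_0)\|_2$ and $\|\Phi_{0xx}\|\leq C\|\Phi_0\|_4$. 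The remaining right-hand side is exactly the one listed in \eqref{con-xxx-esti}. I expect the only genuine difficulty to be this bookkeeping — tracking which higher-order integral must dominate which, and in particular fixing $\lambda_B$ and $\lambda_C$ as constants \emph{before} sending $\varepsilon,\delta,\eta\to0$ so that the chain of absorptions does not become circular; there is no further analytic content. The still-uncontrolled lower-order quantities $\int_0^t\|(\varphi_\tau,\varphi_x)\|_1^2\mathrm{d}\tau$, $\int_0^t\|(\Phi_{x\tau},\Phi_{xx})\|^2\mathrm{d}\tau$ and $\|(\varphi_x,\varphi_t,\Phi_x)\|_1^2$ on the right of \eqref{con-xxx-esti} are deliberately left there, to be handled subsequently by combining with the lower-order estimates \eqref{con-lem-basic} and \eqref{con-fip-xx}.
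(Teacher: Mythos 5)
Your proposal is correct and takes essentially the same route as the paper: the paper also forms a weighted sum, first adding \eqref{con-vfi-xxx} to a large multiple of \eqref{con-h-multi-inte} to get \eqref{vfi-xxx-cancel1} and then combining the result with (a suitably scaled) \eqref{con-fida-xtt}, which is the same three-way linear combination you assemble in one step. Your explicit bookkeeping --- in particular that the weight on \eqref{con-fida-xtt} must be a \emph{fixed} small constant, chosen before $\varepsilon,\delta$, because $\|\varphi_\tau\|_2^2$ on its right-hand side contains $\|\varphi_{xx\tau}\|^2$ with an $O(1)$ coefficient --- makes precise what the paper compresses into the single word ``combining.''
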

\begin{proof}
First, to control the terms related to $ (\Phi _{xt},\Phi _{xxt}, \Phi _{xtt}) $ on the right-hand side of \eqref{con-vfi-xxx}, we add \eqref{con-vfi-xxx} with \eqref{con-h-multi-inte} multiplied by a large positive constant to get
\begin{align}\label{vfi-xxx-cancel1}
&\displaystyle  \|(\varphi _{xxx},\varphi _{xxt},\Phi _{xt},\Phi _{tt},\Phi _{xxx})\|^{2}+\int _{0}^{t}\|(\Phi _{x \tau},\Phi _{\tau}, \Phi _{x \tau \tau},\Phi _{xx \tau},\varphi _{xx \tau})\|^{2}\mathrm{d}\tau
 \nonumber \\
 &~\displaystyle \leq  C(\|\Phi _{0}\|_{4}^{2}+\|(\varphi _{0x},\psi _{0})\|_{2}^{2})+C \left( \|(\varphi _{x},\varphi _{t})\|_{1}^{2}+\|\Phi _{xx}\|^{2} \right)
  \nonumber \\
  &~\displaystyle \quad+C (\delta+ \varepsilon)\int _{0}^{t}\|\varphi _{xxx}\|^{2}\mathrm{d}\tau+C  \int _{0}^{t}(\|(\varphi _{\tau},\varphi _{x})\|_{1}^{2}+ \|(\Phi _{x \tau},\Phi _{xx})\|^{2})\mathrm{d}\tau,
\end{align}
provided $ \varepsilon $ and $ \delta $ are suitably small, where the Cauchy-Schwarz inequality has been used. Combining \eqref{vfi-xxx-cancel1} with \eqref{con-fida-xtt}, for  sufficiently small $ \varepsilon $ and $ \delta $, we have
\begin{align*}
&\displaystyle  \|(\varphi _{xxx},\varphi _{xxt},\Phi _{xt},\Phi _{tt},\Phi _{xxx})\|^{2}+\int _{0}^{t}\|(\Phi _{x \tau},\Phi _{\tau \tau}, \Phi _{x \tau \tau},\Phi _{xx \tau},\varphi _{xx \tau},\varphi _{xxx})\|^{2}\mathrm{d}\tau
 \nonumber \\
 &~\displaystyle \leq  C(\|\Phi _{0}\|_{4}^{2}+\|(\varphi _{0x},\psi _{0})\|_{2}^{2})+C \|(\varphi _{x},\varphi _{t},\Phi _{x})\|_{1}^{2}+C  \int _{0}^{t}(\|(\varphi _{\tau},\varphi _{x})\|_{1}^{2}+ \|(\Phi _{x \tau},\Phi _{xx})\|^{2})\mathrm{d}\tau.
\end{align*}
This gives rise to \eqref{con-xxx-esti} and thus finishes the proof of Lemma \ref{lem-con-vfi-xxx}.
\end{proof}
\vspace*{3mm}

\subsection{Proof of Proposition \ref{prop-key}} 
\label{sub:proof_of_proposition_ref}
By the local existence result in Proposition \ref{prop-local} and the standard extension criterion, it suffices to show the estimates \eqref{regularity-prop} and \eqref{prop-esti} to prove Proposition \ref{prop-key}. We first close the \emph{a priori} assumption \eqref{aproiri-assumpt}. To this end, we add \eqref{con-xxx-esti} to \eqref{con-fip-xx} multiplied by a suitable positive constant and get
\begin{align}
& \displaystyle \|(\varphi _{x},\Phi _{x})\|_{2}^{2}+\|\varphi _{xt}\|_{1}^{2}+\|(\Phi _{xt},\Phi _{tt})\|^{2}
 + \int _{0}^{t}\|(\varphi _{xx},\varphi _{x \tau},\Phi _{x},\Phi _{x \tau},\Phi _{\tau \tau})\|_{1}^{2}\mathrm{d}\tau
  \nonumber \\
 &~ \displaystyle \leq C \left( \|\Phi _{0}\|_{4}^{2}+\|(\varphi _{0x},\psi _{0})\|_{2}^{2} \right)
 +C  \|\varphi _{t}\|^{2}
 +
 C  \int _{0}^{t}\|(\varphi _{x},\varphi _{\tau})\|^{2} \mathrm{d}\tau+ C\int _{0}^{t}\|\varphi _{x}\|\|\Phi _{xx \tau} \|  \mathrm{d}\tau,\nonumber
\end{align}
where the smallness of $ \varepsilon $ and $ \delta $ has been used.
This along with the Cauchy-Schwarz inequality gives
\begin{align}\label{varifie-step-2}
& \displaystyle \|(\varphi _{x},\Phi _{x})\|_{2}^{2}+\|\varphi _{xt}\|_{1}^{2}+\|(\Phi _{xt},\Phi _{tt})\|^{2}
 + \int _{0}^{t}\|(\varphi _{xx},\varphi _{x \tau},\Phi _{x},\Phi _{x \tau},\Phi _{\tau \tau})\|_{1}^{2}\mathrm{d}\tau
  \nonumber \\
 &~ \displaystyle \leq C \left( \|\Phi _{0}\|_{4}^{2}+\|(\varphi _{0x},\psi _{0})\|_{2}^{2} \right)
 +C  \|\varphi _{t}\|^{2}
 +
 C  \int _{0}^{t}\|(\varphi _{x},\varphi _{\tau})\|^{2} \mathrm{d}\tau.
\end{align}
Furthermore, multiplying \eqref{con-lem-basic} by a suitable positive constant, and adding the resulting inequality to \eqref{varifie-step-2}, we obtain
\begin{align}\label{final-verifi}
&\displaystyle \|(\varphi,\Phi)\|_{3}^{2}+\|\varphi _{t}\|_{2}^{2}+\|(\Phi _{xt},\Phi _{tt})\|^{2}+
\int _{0}^{t} \Big(\|\varphi _{x}\|_{2}^{2}+\|(\varphi _{\tau},\Phi _{\tau},\Phi)\|_{2}^{2}+\|\Phi _{\tau \tau}\|_{1}^{2} \Big)\mathrm{d}\tau
 \nonumber \\
 &~\leq
C \left( \|\varphi _{0}\|_{3}^{2}+\|\psi _{0}\|_{2}^{2}+\|\Phi _{0}\|_{4}^{2} \right).
\end{align}
Then by setting
\begin{gather*}
\displaystyle \varepsilon ^{2}= 2  C \left( \|\varphi _{0}\|_{3}^{2}+\|\psi _{0}\|_{2}^{2}+\|\Phi _{0}\|_{4}^{2} \right)
\end{gather*}
and taking $ \|\varphi _{0}\|_{3}+\|\psi _{0}\|_{2}+\|\Phi _{0}\|_{4} $ suitably small, we have
\begin{gather*}
\displaystyle \sup _{0 \leq t<T}\left\{\left\|(\varphi,\Phi)(\cdot, t)\right\|_{3}^{2}+\left\| \psi(\cdot, t)\right\|_{2}^{2}\right\} <\varepsilon ^{2}
\end{gather*}
which hence closes the \emph{a priori} assumption \eqref{aproiri-assumpt}.
To complete the proof of \eqref{regularity-prop}-\eqref{prop-esti}, now it remains to show the following
\begin{gather*}
\displaystyle \|\Phi _{xxxx}\|^{2}+\int _{0}^{t}\left( \|\Phi _{xxx}\|^{2}+\|(\varphi _{\tau \tau},\varphi _{\tau \tau x})\|^{2} \right)  \mathrm{d} \tau \leq C\left( \|\varphi _{0}\|_{3}^{2}+\|\psi _{0}\|_{2}^{2}+\|\Phi _{0}\|_{4}^{2} \right).
 \end{gather*}
Collecting \eqref{Eq-x-simple-b}, \eqref{varphi-tt}, \eqref{vfi-xtt-l2} and \eqref{final-verifi}, one immediately has
\begin{gather*}
\displaystyle \int _{0}^{t}\left( \|\Phi _{xxx}\|^{2}+\|(\varphi _{\tau \tau},\varphi _{\tau \tau x})\|^{2} \right)  \mathrm{d} \tau \leq C\left( \|\varphi _{0}\|_{3}^{2}+\|\psi _{0}\|_{2}^{2}+\|\Phi _{0}\|_{4}^{2} \right).
\end{gather*}
To derive the estimate for $ \Phi _{xxxx} $, we first deduce from \eqref{problem-linearized-simple-c} and \eqref{final-verifi} that
\begin{gather}\label{fida-t-single}
 \displaystyle \|\Phi _{t}\| ^{2} \leq C \|(\Phi _{xx},\varphi _{x},\Phi )\|^{2}\leq C\left( \|\varphi _{0}\|_{3}^{2}+\|\psi _{0}\|_{2}^{2}+\|\Phi _{0}\|_{4}^{2} \right).
 \end{gather}
Next, differentiating \eqref{problem-linearized-simple-c} with respect to $ t $ leads to
\begin{gather*}
\displaystyle \Phi _{tt}=\Phi _{xxt}+a \varphi _{xt}+b \Phi _{t}.
\end{gather*}
This along with \eqref{fida-t-single} and \eqref{final-verifi} yields that
\begin{gather*}
\displaystyle  \|\Phi _{xxt}\|^{2} \leq C\|(\Phi _{tt},\Phi _{t},\varphi _{xt})\|^{2} \leq C\left( \|\varphi _{0}\|_{3}^{2}+\|\psi _{0}\|_{2}^{2}+\|\Phi _{0}\|_{4}^{2} \right).
\end{gather*}
Finally, differentiating \eqref{Eq-x-simple-b} with respect to $ x $, we have
\begin{gather*}
\displaystyle \Phi _{xxxx}=\Phi _{xxt}-a \varphi _{xxx}+b \Phi _{xx},
\end{gather*}
and thus
\begin{align*}
\displaystyle \|\Phi _{xxxx}\|^{2}\leq C \|(\Phi _{xx},\Phi _{xxt},\varphi _{xxx})\|^{2}\leq  C\left( \|\varphi _{0}\|_{3}^{2}+\|\psi _{0}\|_{2}^{2}+\|\Phi _{0}\|_{4}^{2} \right).
\end{align*}
The proof of Proposition \ref{prop-key} is complete.\hfill $ \square $

\vspace*{3mm}
 \subsection{Proof of Theorem \ref{thm-stability}} 
 \label{sub:proof_of_theorem_ref}
 In view of Proposition \ref{prop-key}, the  problem \eqref{ori-eq-a}--\eqref{ori-eq-c}, \eqref{initial-ori}--\eqref{far-field} admits a unique classical solution $ (\rho,m,\phi) $ in $ \mathbb{R}_{+}\times (0, \infty) $. Moreover, thanks to \eqref{pertu-variable}, \eqref{regularity-prop} and \eqref{prop-esti},  it holds that
 \begin{align}
 &\displaystyle   \|(\rho- \bar{\rho},m)\|_{2}^{2}+\|\phi- \bar{\phi}\|_{4}^{2}
  \leq  C\left( \|\varphi _{0}\|_{3}^{2}+\|\psi _{0}\|_{2}^{2}+\|\Phi _{0}\|_{4}^{2} \right),
   \nonumber
 \end{align}
 and that
 \begin{gather}\label{esti-for-orig-solu}
 \displaystyle  \int _{0}^{t}\left( \|(\rho- \bar{\rho},m)\|_{2}^{2}+\|\phi- \bar{\phi}\|_{3}^{2} +\|(\rho _{\tau}, m _{\tau}, \phi _{\tau})\|_{1}^{2} \right)\mathrm{d}\tau\leq  C\left( \|\varphi _{0}\|_{3}^{2}+\|\psi _{0}\|_{2}^{2}+\|\Phi _{0}\|_{4}^{2} \right)
 \end{gather}
 for any $ t >0 $. In the following, we shall prove the large time behavior of $ (\rho,m,\phi) $ as in \eqref{large-time-thm}. For this, recalling the Sobolev inequality $ \|f\|_{L ^{\infty}} \leq C\|f\|^{\frac{1}{2}}\|f _{x}\|^{\frac{1}{2}} $, it suffices to show that
 \begin{gather}\label{L-2-decay-final}
 \displaystyle \lim _{t \rightarrow \infty} \|( \rho -\bar{\rho},m ,\phi- \bar{\phi}) (\cdot,t)\|^{2}\rightarrow 0.
 \end{gather}
 In fact, with the help of \eqref{esti-for-orig-solu} and Cauchy-Schwarz inequality, we get
 \begin{align}\label{soln-xt}
 &\displaystyle  \int _{0}^{+\infty}\left\vert\frac{\mathrm{d}}{\mathrm{d}t}\|( \rho -\bar{\rho},m ,\phi- \bar{\phi} )(\cdot,t) \|^{2} \right\vert \mathrm{d}t
  \nonumber \\
  &~\displaystyle\leq C
 \int _{0}^{+\infty}\left( \|( \rho -\bar{\rho},m ,\phi- \bar{\phi} ) \|^{2}+\| (\rho _{t},m _{t},\phi_{t} ) \|^{2} \right) \mathrm{d}t \leq \infty.
 \end{align}
The estimate \eqref{esti-for-orig-solu} in combination with \eqref{soln-xt} gives \eqref{L-2-decay-final}. Then \eqref{large-time-thm} is proved and we complete the proof of Theorem \ref{thm-stability}.\hfill $ \square $

 \section*{Acknowledgement} 
G. Hong is partially supported from the CAS AMSS-POLYU Joint Laboratory of Applied Mathematics postdoctoral fellowship scheme.  H.Y. Peng was supported from the National Natural Science Foundation of China (No. 11901115), Natural Science Foundation of Guangdong Province (No. 2019A1515010706). Z.-A. Wang was supported in part by the Hong Kong RGC GRF grant No. PolyU 153055/18P (P0005472) and an internal grant No. ZZKN from HKPU (P0031013). C.J. Zhu was supported by the National Natural Science Foundation of China (No. 11771150, 11831003 and 11926346) and Guangdong Basic and Applied Basic Research Foundation (No. 2020B1515310015).




\bibliographystyle{mysiam}
\bibliography{hwhp}

\end{document}